\documentclass[11pt]{amsart}
\allowdisplaybreaks[4]
\usepackage{amssymb}
\usepackage{amsfonts}
\usepackage{graphicx}
\usepackage{epstopdf}
\usepackage{dcolumn}
\usepackage{amsmath}
\usepackage{enumerate}
\usepackage{latexsym,bm}
\usepackage{slashed}
\usepackage{float}
\usepackage{cite}
\usepackage{CJK}
\usepackage[all,cmtip]{xy}
\usepackage{appendix}
\usepackage[normalem]{ulem}
\usepackage[colorlinks,
linkcolor=black,
citecolor=red
]{hyperref}
\usepackage{tikz}
\usetikzlibrary{arrows.meta,shapes,chains}

\DeclareMathOperator{\Span}{Span}
\DeclareMathOperator{\diag}{diag}

\DeclareMathOperator{\sr}{sr}
\DeclareMathOperator{\cell}{cell}

\DeclareMathOperator{\Cay}{Cay}

\newcommand{\Z}{\mathbb Z}

\newcommand{\K}{\mathbb K}
\newcommand{\GL}{\mathrm{GL}}
\newcommand{\im}{\operatorname{im}}
\newcommand{\rank}{\operatorname{rank}}

\theoremstyle{plain}
\newtheorem{theorem}{Theorem}[section]
\newtheorem{lemma}[theorem]{Lemma}
\newtheorem{lem/def}[theorem]{Lemma/Definition}
\newtheorem{cor/def}[theorem]{Corollary/Definition}
\newtheorem{proposition}[theorem]{Proposition}
\newtheorem{corollary}[theorem]{Corollary}

\newtheorem{conjecture}[theorem]{Conjecture}

\theoremstyle{definition}
\newtheorem{definition}[theorem]{Definition}
\newtheorem{example}[theorem]{Example}

\newtheorem{remark}[theorem]{Remark}
\numberwithin{equation}{section}

\allowdisplaybreaks

\begin{document}

\title{Path Homology of Circulant Digraphs}
\author{Xinxing Tang and Shing-Tung Yau}

  \address{
X. Tang: Beijing Institute of Mathematical Sciences and Applications, Beijing, China;
}
\email{tangxinxing@bimsa.cn}
  \address{
S.-T. Yau: Yau Mathematical Sciences Center, Tsinghua University, and Beijing Institute of Mathematical Sciences and Applications, Beijing, China;
}
\email{styau@tsinghua.edu.cn}

\maketitle

\begin{abstract} We organize and extend a set of computations and structural observations about the
Grigoryan--Lin--Muranov--Yau (GLMY) path complex of circulant digraphs $\vec{C}_n^S$ and circulant graphs $C_n^S$. Using the shift automorphism $\tau$ and a Fourier decomposition, we reduce many rank computations for the GLMY boundary maps to finite-dimensional $\tau$-eigenspaces. This provides a reusable “symbol-matrix” recipe that highlights (i) the dependence on prime versus composite $n$ and (ii) stability phenomena for certain natural choices
of connection sets $S$. Several fully worked examples are included, together with a discussion of how
the additive structure of $S$ governs low-dimensional chains and Betti numbers.
\end{abstract}

\tableofcontents

\section{Introduction}

\subsection{Background and motivation}
The foundational GLMY paper \cite{GLMY:basic} introduces \emph{path complexes of digraphs} as a common generalization of simplicial complexes and cubical complexes (We recall their definitions in Section \ref{path}). Over the subsequent series, GLMY and their collaborators developed a toolkit that parallels standard algebraic topology: path cohomology theory \cite{GLMY:cohomology}, functoriality, homotopy invariance (for suitable notions of digraph homotopy) \cite{GLMY:homotopy}, and product formulas \cite{GMY:product}. In particular, K\"unneth-type theorems for Cartesian products and joins are proved at the chain-complex level, which is unusually strong compared to many graph-homology formalisms .

In the last five years, there are rapidly increasing developments of GLMY theory. For example, Lin-Wang-Yau \cite{LWY} developed the discrete Morse theory on digraph. Di-Ivanov-Mukoseev-Zhang \cite{DIMZ} developed a theory of covering digraphs and applyed it to Cayley digraphs to build a bridge between GLMY path homology and group homology. Tang-Yau developed the structure theory \cite{TY1} for the path complex and the cellular complex \cite{TY2} for digraphs under a certain strongly regular condition. Ivanov-Pavutnitskiy \cite{IP} recasted GLMY-style path homology in a general simplicial framework. Fu-Ivanov \cite{FI} constructed explicit bases for path chain spaces for digraphs without multisquares. Li-Muranov-Wu-Yau \cite{LMWY} developed singular cubical/simplicial style homology theories inside categories of digraphs/quivers. The theory also interfaces with analysis on graphs via path Laplacians and Hodge-type decompositions \cite{GrigoryanHodge, GLYZ:eigenvalues}, making it relevant to directed network dynamics. 

\vskip 0.3cm

A \emph{circulant digraph} is a Cayley digraph of the cyclic group $\mathbb{Z}_n$: vertices are residues mod $n$, and directed edges are prescribed by a connection set $S\subset \mathbb{Z}_n$ via $a\to a+s$ for $s\in S$ \cite{CayleyMathWorld,Alspach2002,CirculantThesis2016}. 
Circulants matter for several reasons.
Algebraically, they are the simplest nontrivial Cayley digraphs and thus a natural testing ground for phenomena in Cayley
digraph theory (connectivity, automorphism groups, isomorphism problems, enumeration).
Analytically and computationally, the translation symmetry of $\Z_n$ implies that adjacency and Laplacian-type operators
are diagonalizable by the discrete Fourier transform; see, for example, the classical references on circulant matrices
\cite{Davis1979,Gray2006} and surveys on circulant (di)graphs \cite{Morris06Survey}.
As a result, circulants are ubiquitous model families in spectral graph theory \cite{Biggs}, random walks/mixing \cite{ABTW}, coding theory \cite{Fossorier04, MBGcoding}, and algorithmic benchmarking \cite{BCH95, Hwang03, HRD2022}; they also appear naturally as regular motifs in network science.

\vskip 0.3cm
Circulant digraphs are an especially natural class where explicit GLMY computations are both informative and tractable. 
\begin{itemize}
    \item First, the $\mathbb{Z}_n$-action by translation induces an action on every $\Omega_m(G)$ commuting with the GLMY boundary. Consequently the entire chain complex decomposes into Fourier block components, and ranks of differentials often reduce to small matrices on each block. 
    \item Second, the ``allowed path'' condition becomes combinatorics of words in the connection set $S$, while the defining condition $\partial \Omega_m(G)\subseteq \Omega_{m-1}(G)$ becomes a system of local linear relations expressing cancellation of forbidden faces. This frequently yields closed-form bases for $\Omega_m(G)$ and makes higher-dimensional behavior accessible. 
    \item Third, circulants provide controlled test cases for structural theorems (homotopy invariance, K\"unneth formulas, Laplacians) and for persistent extensions, because natural filtrations (by weights, by enlarging $S$, by thresholds) preserve the underlying group symmetry \cite{ChowdhuryMemoli2017}. 
\end{itemize}

For these reasons, studying and computing path complexes/homology of circulant digraphs is not merely an exercise: it clarifies how GLMY topology detects directed cyclicity and higher-order directed motifs in a maximally symmetric setting, and it supplies explicit building blocks for both theory and applications.

\subsection{Main results}

To understand the path complex of a circulant digraph, we start with a careful investigation of an interesting simple but non-trivial digraph $\vec{C}_5^{1,2}$ (see Fig. \ref{fig:C512}), and obtain

\begin{theorem}[Lemma \ref{basis}, Theorem \ref{thm1}]
For the circulant digraph $G=\vec{C}_5^{1,2}$, we have
\begin{enumerate}
\setlength\itemsep{5pt}
\item For each $n\geq1$, $\dim_{\K}\Omega_n(G;\K)=10$. Each vertex $a\in\Z_5$ gives two generators:
\begin{align*}
\alpha_a^{(n)}=~&e_{a,(a+1),(a+2),\ldots,(a+n)},\\
\beta_a^{(n)}=~&\sum_{j=1}^n(-1)^{n-j}e_{a,a+1,\ldots,a+j-1,a+j+1,a+j+2,\ldots,a+n+1}.
\end{align*}
\item Its path homologies are given by
\[H_n^{\mathrm{path}}(G;\K)\cong\begin{cases}
\K &\quad n=0,1\\
0 &\quad n\geq2
\end{cases}~.\]
\end{enumerate}   
\end{theorem}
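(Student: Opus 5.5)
The plan is to first pin down $\Omega_n(G)$ by a word-combinatorial analysis, and then compute homology by decomposing along the shift $\tau$.

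\textbf{Basis of $\Omega_n$.} An allowed elementary $n$-path $e_{i_0\cdots i_n}$ in $G=\vec{C}_5^{1,2}$ is determined by its start $a=i_0$ and its step word $w=(s_1,\dots,s_n)\in\{1,2\}^n$.

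Deleting an endpoint always gives an allowed path. Deleting an interior vertex merges two consecutive steps $s_j,s_{j+1}$ into $s_j+s_{j+1}$. This sum is $2\in S$ for $(1,1)$, $3\notin S$ for $(1,2)$ and $(2,1)$, and $4\notin S$ for $(2,2)$. Since $S=\{1,2\}$, a non-allowed face with one step equal to $4$ can only come from a $(2,2)$ pair, so no other allowed path produces it. Hence no element of $\Omega_n$ can contain a word with two adjacent $2$'s. A non-allowed face with a step $3$ arises exactly from the two words $\dots x,1,2,y\dots$ and $\dots x,2,1,y\dots$, with the same start. The invariance condition therefore forces the coefficients of these two words to cancel with the appropriate signs.

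I will split by the number of $2$'s in the word. Words with no $2$ give $\alpha_a^{(n)}$, which is automatically in $\Omega_n$. Words with exactly one $2$ form a chain under the swaps $12\leftrightarrow21$, so the cancellation conditions determine a one-dimensional solution, which is $\beta_a^{(n)}$ with its alternating signs. For words with $k\ge2$ twos, all non-adjacent, I will argue that the cancellation relations force every coefficient to vanish. The idea is to push the $2$'s together via swaps until two of them become adjacent, where the coefficient must be $0$. This also requires checking that the cancellation graph on such words is connected to a word with adjacent $2$'s, possibly with sign inconsistencies that also force $0$. This gives $\dim\Omega_n=10$, with basis $\{\alpha_a^{(n)},\beta_a^{(n)}\}_{a\in\Z_5}$. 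I expect this vanishing step to be the main combinatorial obstacle.

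\textbf{Boundary in the basis.} Next I compute $\partial\alpha_a^{(n)}$ and $\partial\beta_a^{(n)}$ in the basis. For $\alpha$, the two endpoint faces give $\alpha_{a+1}^{(n-1)}$ and $(-1)^n\alpha_a^{(n-1)}$. The interior $(1,1)$-deletions give one-$2$ words starting at $a$, which assemble, up to a sign, into $\beta_a^{(n-1)}$. I will do a similar computation for $\beta$. Here its endpoint faces, after accounting for the step-$3$ cancellations, should give combinations of $\beta_{a}^{(n-1)}$, $\beta_{a+1}^{(n-1)}$ and possibly $\alpha_{a+2}^{(n-1)}$.

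Since $\partial$ commutes with $\tau$ (shift by $1$), each $\Omega_n\otimes\overline{\K}$ for $n\ge1$ is free of rank $2$ over $\K[\Z_5]$. On the $\omega^k$-eigenspace, where $\omega^5=1$, the map $\partial_n$ becomes a $2\times2$ symbol matrix with entries that are polynomials in $\omega^k$. One must take care that everything descends to an arbitrary field $\K$; one can avoid eigenspaces by computing ranks of the circulant block matrices directly, or by working over a splitting field and noting that ranks are unchanged under field extension.

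\textbf{Rank count.} The homology claim is equivalent to $\operatorname{rank}\partial_1=4$ and $\operatorname{rank}\partial_n=5$ for all $n\ge2$. Indeed, $H_0=\K$ since $G$ is connected. Then $\dim H_1=10-4-5=1$, and for $n\ge2$ we get $\dim H_n=10-5-5=0$.

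So I will show that, for $n\ge2$, each $2\times2$ symbol matrix has rank exactly $1$ in every eigenspace, which gives total rank $5$. Rank $\le1$ follows from $\partial^2=0$ together with a nonvanishing entry. Rank $\ge1$ is checked by exhibiting a nonzero entry for each $k$; the case $\omega^k=1$ requires separate care. The nontrivial $H_1$ class should be the $\tau$-invariant cycle $\sum_a\alpha_a^{(1)}$ (the $5$-cycle), modulo boundaries of invariant $2$-chains.
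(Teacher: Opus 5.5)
Your route is the paper's. The step-word analysis gives the basis: a $22$ kills the coefficient, a $12\leftrightarrow 21$ swap flips the sign, and a single chain of swaps pushing one $2$ rightward onto the next $2$ kills every word with two or more $2$'s, so no connectivity or sign-consistency issue arises. The $\tau$-Fourier splitting then reduces everything to $2\times2$ symbols of rank one.

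Your upper bound on the rank differs from the paper's determinant computation: $\partial^2=0$ together with $\partial^{(\lambda)}_{n-1}\neq0$ on a $2$-dimensional block. It would even spare you the formula for $\partial\beta$. However, it fails in exactly one block, $n=2$, $\lambda=1$. There $\partial_1^{(1)}=0$ (its symbol is $(\lambda-1,\ \lambda^2-1)$), so $\partial^2=0$ gives no constraint on $\partial_2^{(1)}$. That block is the one that decides $H_1$, since rank $2$ there would give $H_1=0$. The lower bound, by contrast, needs no special care at $\lambda=1$: the coefficient of $\beta^{(n-1)}_\lambda$ in $\partial\alpha^{(n)}_\lambda$ is $-(-1)^n$ for every $\lambda$. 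So your caveat about $\omega^k=1$ belongs to the upper bound, not the lower one.

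Closing that block requires the explicit $\partial\beta$, and your sketch omits a term. Deleting the last vertex of the summand $j=n$, namely $e_{a,a+1,\dots,a+n-1,a+n+1}$, contributes $(-1)^n\alpha_a^{(n-1)}$. The correct formula, valid for all $n\ge2$, is
\[
\partial\beta_a^{(n)}=(-1)^n\alpha_a^{(n-1)}-(-1)^n\alpha_{a+2}^{(n-1)}-(-1)^n\beta_a^{(n-1)}+\beta_{a+1}^{(n-1)}.
\]
The interior faces with two $2$'s cancel automatically because $\partial\beta_a^{(n)}\in\Omega_{n-1}$. Summing over $a$ at $n=2$ gives $\partial\bigl(\sum_a\beta_a^{(2)}\bigr)=0$; this is the commutator cycle $W^{1,2}$. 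Meanwhile $\partial\bigl(\sum_a\alpha_a^{(2)}\bigr)=2\sum_a\alpha_a^{(1)}-\sum_a\beta_a^{(1)}$. Hence $\operatorname{rank}\partial_2^{(1)}=1$, and $H_1\cong\K$ is spanned by the $5$-cycle.

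Without the $\alpha_a^{(n-1)}$ term you would instead get $\partial\bigl(\sum_a\beta_a^{(2)}\bigr)=-\sum_a\alpha_a^{(1)}$. That gives rank $2$ and the wrong answer $H_1=0$, and also a symbol of determinant $-1$ in every block, contradicting $\partial^2=0$. The paper reads off rank one from $\det M_{s_n}(\lambda)=0$, which holds for every $\lambda$ (and, with the formula above, already at $n=2$), so it needs no case split. Your remark that ranks are stable under field extension is a point the paper leaves implicit.
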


After a certain analysis for the path homologies of the circulant digraph $\vec{C}_n^S$, we study some special cases and obtain the following result.

\begin{theorem}[Theorem \ref{thm:1s}, Theorem \ref{thm:S12d}]
For the following special choices of the connection set $S$, we have some explicit results.
\begin{itemize}
    \item[(1)] For the circulant digraph $\vec{C}_n^S$ with $n\ge5$, $S=\{1,s\}$, $1<s<n/2$. 
\begin{itemize}
    \item If $s=2$, then
    \[H_0^{\mathrm{path}}\cong\K,\quad H_1^{\mathrm{path}}\cong\K,\quad H_m^{\mathrm{path}}\cong0,~m\geq2.\]
    \item If $s\neq2$, then
    \[H_0^{\mathrm{path}}\cong\K,\quad H_1^{\mathrm{path}}\cong\K^2,\quad H_2^{\mathrm{path}}\cong\K,\quad H_m^{\mathrm{path}}\cong0,~m\geq3.\]
\end{itemize}   
   \item[(2)] For $S=\{1,2,\ldots,d\}$, $2<d<n/2$. Let $i$ be the chain map
   \[i:\ \left(\Omega_*(\vec{C}_n^{1,2}),\partial\right)\ \to\ \left(\Omega_*(\vec{C}_n^S),\partial\right).\]
Then there exists a chain map
\[
\Pi:\ \left(\Omega_*(\vec{C}_n^S),\partial\right)~\rightarrow~ \left(\Omega_*(\vec{C}_n^{1,2}),\partial\right)
\]
such that $\Pi\circ i=\mathrm{Id}_{\Omega_*(\vec{C}_n^{1,2})}$ and $i\circ\Pi$ is chain-homotopic to $\mathrm{Id}_{\Omega_*(\vec{C}_n^S)}$.
\end{itemize}
\end{theorem}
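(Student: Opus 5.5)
The statement collects two separate results, so I would prove them separately. Both rest on an explicit description of $\Omega_m(\vec{C}_n^S)$ obtained from local cancellation of non-allowed faces. The translation $\tau$ will then be used to organize the rank computations.

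\textbf{Part (1), $S=\{1,s\}$.} Write an allowed $m$-path from $a$ as a word $w=(w_1,\dots,w_m)\in S^m$. A face obtained by deleting an interior vertex merges two consecutive steps $w_i+w_{i+1}\in\{2,1+s,2s\}$, and this face is allowed only if the merged step lies in $S$ (mod $n$).

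For $1<s<n/2$ with $n\ge5$, the merged step lies in $S$ exactly when $s=2$ and the pair is $1,1$. Here one must rule out coincidences such as $2s\equiv1$ or $1+s\equiv s$ mod $n$ using $s<n/2$; the case $2s\equiv 1$ is impossible since $2s<n$.

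\emph{Case $s=2$.} The local relation is the same as for $\vec{C}_5^{1,2}$, so the proof of Lemma \ref{basis} should carry over verbatim. I expect $\Omega_m$ to have basis $\alpha_a^{(m)},\beta_a^{(m)}$ for all $a\in\Z_n$, giving $\dim\Omega_m=2n$. The argument of Theorem \ref{thm1} then gives $H_0=H_1=\K$ and $H_m=0$ for $m\ge2$. I would redo that argument on each $\tau$-eigenspace with eigenvalue $\zeta$, $\zeta^n=1$. Each eigenspace of $\Omega_m$ is $2$-dimensional, spanned by $\sum\zeta^{-a}\alpha_a$ and $\sum\zeta^{-a}\beta_a$. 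The boundary on it is a $2\times2$ symbol matrix in $\zeta$. It should be invertible in the right sense for $\zeta\ne1$, while at $\zeta=1$ it has the rank pattern producing one class in each of degrees $0$ and $1$.

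\emph{Case $s\ne2$.} No merged step ever lies in $S$, so every interior face is forbidden. Hence $\Omega_m$ is spanned by combinations in which the forbidden faces cancel. Two words give the same forbidden face only when they merge to the same multiset of steps, and the relevant coincidence is $1+s=s+1$. This yields the squares $e_{a,a+1,a+1+s}-e_{a,a+s,a+1+s}$ in $\Omega_2$.

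In degree $m\ge3$ I would show that $\Omega_m=0$. A combination of words must cancel all forbidden faces. This forces every adjacent pair in every word to be $\{1,s\}$ in both orders, which is impossible for three consecutive steps, since the pattern $1,s,1$ would also require $1,1$. This is the main combinatorial check, and it needs care when $n$ is small relative to $s$, for instance with $3s\equiv$ something; I would handle such coincidences by listing the possible equalities among sums of at most three steps mod $n$.

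The chain complex is then $\K^n\leftarrow\K^{2n}\leftarrow\K^n\leftarrow0$ with $\tau$-equivariant boundaries. Fourier decomposition gives the symbols $\partial_1=(\zeta-1,\ \zeta^s-1)$ and $\partial_2=(\zeta^s-1,\ -(\zeta-1))^T$ up to signs. For $\zeta\ne1$ the eigenspace complex is exact, because $\zeta-1\ne0$ gives rank $1$ to both maps. At $\zeta=1$ both maps vanish, giving $1,2,1$. This yields $H_0=\K$, $H_1=\K^2$, $H_2=\K$.

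\textbf{Part (2), $S=\{1,\dots,d\}$.} Here I would build $\Pi$ as a retraction modelled on subdivision: a step $k$ is replaced by the words in $\{1,2\}$ summing to $k$. Concretely, I define $\Pi$ on the elementary paths in $\Omega_*(\vec{C}_n^S)$ by sending any path with a step $\ge3$ to $0$, and each path using only steps $1,2$ to itself. Then $\Pi\circ i=\mathrm{Id}$ is automatic. Checking that $\Pi$ is a chain map requires that the faces killed by $\Pi$ cancel within $\Omega$. If this fails, I would instead define $\Pi$ via a discrete Morse matching, in the spirit of \cite{LWY}, that collapses paths containing a step $\ge3$.

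For the homotopy, I would construct $h$ degree by degree, using acyclicity of the "extra" part. Equivalently, I would show that $\Omega_*(\vec{C}_n^S)/i(\Omega_*(\vec{C}_n^{1,2}))$ is contractible, via a Fourier computation on each $\tau$-eigenspace together with an induction on $d$ that passes from $\{1,\dots,d-1\}$ to $\{1,\dots,d\}$ by adjoining step $d=(d-1)+1$. Since the complexes are free over $\K[\Z_n]$ and bounded below, a quasi-isomorphism that is split injective gives a chain homotopy equivalence with the stated retraction. The main obstacle is the explicit description of $\Omega_m(\vec{C}_n^{1,\dots,d})$ in all degrees, needed for the induction step. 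I would attempt it through the structure theory of \cite{TY1}, by filtering by the maximal step size.
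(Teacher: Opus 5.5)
Your Part (1) follows the paper's route. Local cancellation of forbidden faces gives the bases: $\alpha_a^{(m)},\beta_a^{(m)}$ for $s=2$, and $\operatorname{comm}_a(1,s)$ together with $\Omega_{\ge3}=0$ for $s\neq2$. Fourier blocks then give the ranks. Two small corrections:
\begin{itemize}
\item For $s=2$, the $2\times2$ symbol of $\partial_m$ ($m\ge2$) is never invertible. Its determinant vanishes identically, as it must, since consecutive nonzero blocks compose to zero. What you need, and what holds, is that it has rank exactly $1$ for every $\zeta$, because the coefficient of $\beta^{(m-1)}_\zeta$ in $\partial\alpha^{(m)}_\zeta$ is $-(-1)^m$. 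Hence $\dim H_m^{(\zeta)}=2-1-1=0$ for $m\ge2$, and the distinction $\zeta=1$ versus $\zeta\neq1$ enters only through $\partial_1$.
\item Your worry about coincidences among sums of three steps is unnecessary. A forbidden face has exactly one step outside $S$, and its position identifies the deleted vertex, so only the two-step sums $2,\,1+s,\,2s$ ever matter.
\end{itemize}

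Part (2), however, has a genuine gap.

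\emph{The truncation map fails.} Truncation to $\{1,2\}$-words is not a chain map, already in degree $1$. The condition $\Pi\circ i=\mathrm{Id}$ forces $\Pi_0=\mathrm{Id}$, so $\partial\Pi(e_{a,a+3})=0$ while $\Pi\partial(e_{a,a+3})=e_{a+3}-e_a\neq0$. It also does not map $\Omega$ to $\Omega$: the chain $e_{a,a+2,a+4}-e_{a,a+1,a+4}$ lies in $\Omega_2(\vec{C}_n^{1,2,3})$, but it truncates to $e_{a,a+2,a+4}\notin\Omega_2(\vec{C}_n^{1,2})$.

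\emph{The fallback only restates the theorem.} Your reduction is correct as far as it goes: over a field, an injective quasi-isomorphism always admits a chain retraction $\Pi$ with $\Pi\circ i=\mathrm{Id}$ and $i\circ\Pi\simeq\mathrm{Id}$, and freeness over $\K[\Z_n]$ plays no role. But this reduces the statement to its entire content, namely that $\Omega_*(\vec{C}_n^S)/\Omega_*(\vec{C}_n^{1,2})$ is acyclic in every degree. For that you offer only an unspecified Morse matching and an induction on $d$. By your own account, the key input for that induction, a description of $\Omega_m(\vec{C}_n^{1,\dots,d})$ for all $m$, is missing. So nothing in Part (2) is actually proved.

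\emph{What the paper does instead.} It replaces truncation by subdivision corrected by a homotopy:
\begin{itemize}
\item $h$ inserts the vertex $v_{j-1}+1$ into the first step of length $\ell\ge3$;
\item $\pi=\mathrm{Id}-(\partial h+h\partial)$ is then automatically a chain map on $\Lambda_*$;
\item iterating $\pi$ is meant to lower a lexicographic defect profile until only steps $1,2$ remain;
\item $\sum_k h\pi^k$ supplies the chain homotopy.
\end{itemize}
If you adopt this, be aware that the real work is showing that $\pi$ (and $h$) preserve $\Omega_*$, and the paper's restriction lemma does not establish this as written. Take $u=e_{a,a+3,a+5}-e_{a,a+2,a+5}\in\Omega_2(\vec{C}_n^{1,2,3})$. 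Then $h(u)=e_{a,a+1,a+3,a+5}+e_{a,a+2,a+3,a+5}$, and $\partial h(u)$ contains the non-allowed path $e_{a,a+1,a+5}$ with nonzero coefficient, whatever sign convention is used for $h$. Since $h\partial u$ is allowed, this term survives, so $\pi(u)\notin A_2$. Whichever route you take, the step still needing a real argument is the same: $\Omega$-compatibility of the homotopy, or, in your reduction, acyclicity of the quotient in all degrees.
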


Furthermore, we develop the computation method of $H_*^{\mathrm{path}}(\vec{C}_5^{1,2})$ and obtain the Fourier block decomposition for the path complex/homology of $\vec{C}_n^S$.

\begin{proposition}[Proposition \ref{prop:Fourier}]
Assume $\K$ contains all $n$-th roots of unity (e.g.\ $\K=\mathbb C$). For each $\lambda\in\K$ with $\lambda^n=1$, define the
$\lambda$-eigenspace
\[
\Omega_m^{(\lambda)}:=\{\,\alpha\in \Omega_m:\ \tau\alpha=\lambda\,\alpha\,\}.
\]
Then we have the direct sum decomposition
\begin{equation}\label{eq:fourier-split}
\Omega_m \cong \bigoplus_{\lambda^n=1} \Omega_m^{(\lambda)},\qquad
\partial(\Omega_m^{(\lambda)})\subset \Omega_{m-1}^{(\lambda)}.
\end{equation}
Each block is finite-dimensional, and the matrices of $\partial\big|_{\Omega_{*}^{(\lambda)}}$ have entries that are Laurent polynomials in $\lambda$. Consequently,
\begin{equation}\label{eq:homology-split}
H_m^{\mathrm{path}}(\vec{C}_n^S) \cong \bigoplus_{\lambda^n=1} H_m^{(\lambda)},\qquad
H_m^{(\lambda)}:=H_m(\Omega_*^{(\lambda)},\partial).
\end{equation}
\end{proposition}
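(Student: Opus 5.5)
The plan is to exploit the fact that $\tau$, the shift $a\mapsto a+1$, is a digraph automorphism of $\vec{C}_n^S$ with $\tau^n=\mathrm{Id}$. Since $\tau$ maps edges to edges, it sends allowed elementary paths $e_{a_0\cdots a_m}$ to allowed paths $e_{(a_0+1)\cdots(a_m+1)}$. So it acts linearly on $\mathcal{A}_m$ and commutes with the path boundary $\partial$, because $\partial$ is defined by deleting vertices and this commutes with relabelling. It follows that $\tau$ preserves $\Omega_m=\{v\in\mathcal{A}_m:\partial v\in\mathcal{A}_{m-1}\}$ and commutes with $\partial$ on $\Omega_*$.

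For the splitting of each $\Omega_m$, I would use that $\tau$ satisfies $x^n-1$. Assuming $\K$ contains $n$ distinct $n$-th roots of unity, which requires $\mathrm{char}\,\K\nmid n$ and which I would state explicitly, the polynomial $x^n-1$ is separable, so $\tau$ is diagonalizable on $\Omega_m$. Concretely, I would take the projectors
\[
P_\lambda=\frac1n\sum_{k=0}^{n-1}\lambda^{-k}\tau^k .
\]
These satisfy $\tau P_\lambda=\lambda P_\lambda$, $\sum_\lambda P_\lambda=\mathrm{Id}$ (by orthogonality of characters) and $P_\lambda P_\mu=\delta_{\lambda\mu}P_\lambda$. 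Hence $\Omega_m=\bigoplus_\lambda\Omega_m^{(\lambda)}$ with $\Omega_m^{(\lambda)}=P_\lambda\Omega_m$. Because $\partial\tau=\tau\partial$, we get $\partial P_\lambda=P_\lambda\partial$, and therefore $\partial(\Omega_m^{(\lambda)})\subset\Omega_{m-1}^{(\lambda)}$. Finite-dimensionality is immediate, since $\mathcal{A}_m$ is finite-dimensional: there are at most $n|S|^m$ allowed paths.

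For the Laurent polynomial description, I would pick a basis of $\Omega_m^{(\lambda)}$ adapted to the $\tau$-orbit structure. Allowed $m$-paths starting at $0$ are indexed by words $w=(s_1,\dots,s_m)\in S^m$. For each word set
\[
E_w^{(\lambda)}=\sum_{a\in\Z_n}\lambda^{-a}e_{a,a+s_1,\ldots}.
\]
These form a basis of $\mathcal{A}_m^{(\lambda)}$, and $\partial$ acts on them by a matrix whose entries are sums of terms $\pm\lambda^{k}$. The only nontrivial case is deleting the initial vertex, which shifts the base point by $s_1$ and contributes a factor $\lambda^{s_1}$; the other faces merge consecutive steps and contribute $\pm1$. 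The subspace $\Omega_m^{(\lambda)}$ is then the kernel of the composite of $\partial$ with projection onto the non-allowed words, so it is cut out by Laurent-polynomial linear equations. The restricted $\partial$ has Laurent-polynomial entries in a basis of the form $\sum_w c_w(\lambda)E_w^{(\lambda)}$, obtained by solving those equations. I would phrase this claim as being about $\mathcal{A}_*^{(\lambda)}$ and the defining relations, since a polynomial basis of the kernel may require care.

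The homology splitting follows because $(\Omega_*,\partial)=\bigoplus_\lambda(\Omega_*^{(\lambda)},\partial)$ as chain complexes, and homology commutes with direct sums. The only real obstacle is the Laurent-polynomial claim for the matrices on $\Omega_*^{(\lambda)}$ themselves, since the dimension of the kernel may jump at special $\lambda$. I would handle this by working with the explicit $E_w^{(\lambda)}$ basis for the ambient $\mathcal{A}_*^{(\lambda)}$, where everything is visibly Laurent polynomial in $\lambda$.
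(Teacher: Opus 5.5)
Your argument follows the paper's route: $\tau$-equivariance of $\partial$, diagonalizability of $\tau$ from the separability of $t^n-1$, and the Fourier generators $E_w^{(\lambda)}$ (the paper's $f_w^{(\lambda)}$), with the factor $\lambda^{s_1}$ coming from deleting $v_0$. The explicit projectors $P_\lambda$ and the weaker hypothesis $\mathrm{char}\,\K\nmid n$ (the paper assumes characteristic $0$) are harmless refinements, and the splittings of $\Omega_*$ and of $H_*$ are correct as you wrote them.

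Where your plan falls short of the statement is the Laurent-polynomial claim for $\partial|_{\Omega_*^{(\lambda)}}$. You propose to replace it by a claim about the ambient space $A_*^{(\lambda)}$. That retreat is unnecessary, and the jump in kernel dimension you fear does not happen. The missing observation is already in your own face computation.

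\begin{itemize}
\item The only face carrying a power of $\lambda$ is the deletion of $v_0$, and that face (like the deletion of $v_m$) is always allowed.
\item Non-allowed faces come only from interior merges with $s_j+s_{j+1}\notin S$, and these carry the constant coefficient $(-1)^j$.
\end{itemize}

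Write $u=\sum_w c_wE_w^{(\lambda)}$. Then $u\in\Omega_m^{(\lambda)}$ is the system $\sum_{w:\,\mathrm{merge}_j(w)=w'}c_w=0$, with one equation for each non-allowed regular word $w'$. Here $j$ is the unique position at which $w'$ has a step outside $S$. This system does not involve $\lambda$.

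So there is a fixed subspace $\widetilde\Omega_m\subset\K^{S^m}$ with $\Omega_m^{(\lambda)}=\{\sum_wc_wE_w^{(\lambda)}:\ c\in\widetilde\Omega_m\}$ for every $\lambda$. In particular $\dim\Omega_m^{(\lambda)}$ is independent of $\lambda$, as in the paper's examples, where the bases of $\Omega_m^{(\lambda)}$ do not depend on $\lambda$.

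Now fix a basis of each $\widetilde\Omega_m$ and a $\lambda$-independent left inverse for the basis of $\widetilde\Omega_{m-1}$. The matrix of $\partial$ is then a product of constant matrices with the ambient matrix, whose entries are sums of terms $\pm\lambda^k$. Hence it has Laurent-polynomial entries. They are even integral, as the paper asserts, if you take a $\Z$-basis of the saturated lattice of integer solutions, which admits an integral left inverse.

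Your suggested basis $\sum_wc_w(\lambda)E_w^{(\lambda)}$, with $\lambda$-dependent coefficients, would not by itself give Laurent entries. Inverting a $\lambda$-dependent change of basis can produce rational functions of $\lambda$. The paper's proof passes over this point with ``after choosing any Fourier bases''; it is the $\lambda$-independence of the constraints that actually justifies it.
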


In the theory of circulant (di)graphs, one likes to focus on the strongly connected and no wrap-arround choice of $S$. Under such a choice, our path homology also has some stable phenomenons.

\begin{theorem}[Theorem \ref{thm:strong-stability}]
Let $S=\{1,\gamma_1,\ldots,\gamma_{d-1}\}$, with $1<\gamma_1<\cdots<\gamma_{d-1}<n/2$. There exists a finite set $\mathcal Q_+(S)$ as in Definition~\ref{def:Qplus} such that for every Fourier mode $\lambda$ with $\lambda^n=1$:
\begin{itemize}
\item[(1)] If $\lambda\neq 1$ and $\operatorname{ord}(\lambda)\notin \mathcal Q_+(S)$, then
\[
H_m^{(\lambda)}=0\qquad\text{for all }m\ge 0.
\]
\item[(2)] Consequently, the Betti numbers
\[
\beta_m(n):=\dim_\K H_m^{\mathrm{path}}(\vec{C}_n^S)
\]
can only change when $n$ becomes divisible by some $q\in\mathcal Q_+(S)$.
In particular, for all sufficiently large primes $p$ with $p\notin \mathcal Q_+(S)$, the sequence $\{\beta_m(p)\}_m$ is
constant (independent of $p$).
\end{itemize}
\end{theorem}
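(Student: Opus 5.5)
The plan is to prove each Fourier block $\Omega_*^{(\lambda)}$ acyclic by exhibiting a contracting homotopy. The coefficients of that homotopy are rational functions in $\lambda$ whose denominators are built from cyclotomic-type polynomials determined by $S$. The set $\mathcal Q_+(S)$ of Definition~\ref{def:Qplus} will be, by construction, the set of orders $q$ of roots of unity at which one of these denominators vanishes. We then check that this set is finite and does not depend on $n$.

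\emph{Step 1: a uniform symbol complex.} Work over $\K[\lambda^{\pm1}]$. Every $\tau$-eigenvector in $\Omega_m^{(\lambda)}$ has the form $\sum_{a}\lambda^{-a}\tau^a(c)$, where $c$ is a chain based at the vertex $0$. So $\Omega_m^{(\lambda)}$ is identified with a space of $\K$-combinations of allowed words $(s_1,\dots,s_m)\in S^m$, cut out by the local cancellation conditions of Proposition~\ref{prop:Fourier}.

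Because $\gamma_{d-1}<n/2$, no sum of two elements of $S$ wraps around. Hence for $n$ large compared with $m\max S$, the regularity conditions (which forbidden faces $s_i+s_{i+1}\notin S$ must cancel) are independent of $n$. This gives a single complex of free $\K[\lambda^{\pm1}]$-modules $\mathcal C_*(S)$ whose specialisation at any $n$-th root of unity recovers $\Omega_*^{(\lambda)}$.

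The boundary $\partial$ acts on words by deleting the last vertex, multiplying by $\lambda^{s_1}$ when the first vertex is deleted (translation back to base point $0$), and merging $s_i,s_{i+1}$ in the interior faces.

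\emph{Step 2: a contracting homotopy and the definition of $\mathcal Q_+(S)$.} Mimic the computation for $\vec{C}_5^{1,2}$ in Theorem~\ref{thm1}. Define $h$ by prepending a step $1\in S$, i.e. $h(s_1,\dots,s_m)=(1,s_1,\dots,s_m)$ suitably corrected so that the result stays in $\Omega$. Compute $\partial h+h\partial=(1-\lambda)\,\mathrm{Id}+N$, where $N$ is a correction term coming from the faces $1+s_1\notin S$. Filter by word length and by $\sum s_i$; on the associated graded, $N$ should be nilpotent or triangular. Then $\partial h+h\partial$ has determinant on each graded piece equal to a product of factors $P_j(\lambda)=\sum_{k<\gamma_j}\lambda^k$ or $1-\lambda^{\gamma_j}$.

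Define $\mathcal Q_+(S)$ as the finite set of orders $q>1$ of roots of these polynomials, which are divisors of the $\gamma_j$ and of differences and sums of elements of $S$. When $\lambda\ne1$ and $\operatorname{ord}\lambda\notin\mathcal Q_+(S)$, every determinant is nonzero. Then $\partial h+h\partial$ is invertible and commutes with $\partial$, so $H_*^{(\lambda)}=0$. This proves (1).

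\emph{Step 3: consequence (2).} By \eqref{eq:homology-split}, $\beta_m(n)=\dim H_m^{(1)}+\sum\dim H_m^{(\lambda)}$, where the sum runs over $\lambda\ne1$ whose order divides $n$ and lies in $\mathcal Q_+(S)$. The term $H_m^{(1)}$ is the specialisation of the uniform complex $\mathcal C_*(S)$ at $\lambda=1$, so it is independent of $n$. The remaining terms depend only on $\{q\in\mathcal Q_+(S):q\mid n\}$. For a prime $p\notin\mathcal Q_+(S)$, that set is empty, so $\beta_m(p)=\dim H_m^{(1)}$ is constant.

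\emph{Main obstacle.} The hardest part is Steps 1--2 in high degree. The uniformity in $n$ must hold for all $m$, not only for $m$ small relative to $n$: long words wrap around $\Z_n$, and cancellation conditions could in principle involve vertices that coincide mod $n$. To handle this I would use that allowed paths in $\Omega$ are determined by local two-step conditions (faces $s_i+s_{i+1}$), which never wrap because of the hypothesis $<n/2$. I would also check that $h$ preserves the $\partial$-invariance condition exactly, adjusting $h$ by correction terms as in the $\beta_a^{(n)}$ generators of Lemma~\ref{basis}.
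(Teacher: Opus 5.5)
Your Step~1 is correct, and it is a cleaner form of the setup the paper uses implicitly. Every forbidden face is an interior face with merged step $s_j+s_{j+1}\in[2,2\gamma_{d-1}]\subset[2,n-1]$. Hence the $\Omega$-conditions split by starting vertex and are independent of $n$ in every degree, so the wrap-around worry in your last paragraph does not arise. This gives $\Omega_*^{(\lambda)}\cong\Omega_*(0)$, with boundary coefficients in $\Z[t^{\pm1}]$ evaluated at $t=\lambda$. Step~3 is also fine once (1) and the finiteness of the exceptional set are known.

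The gap is Step~2. It carries the entire content of the theorem and is not actually carried out.

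\textbf{Prepending does not preserve $\Omega$.} In $h(u)=\sum_w c_w f_{1,w}$, deleting the inserted vertex gives the face $f_{1+s_1,s_2,\dots,s_m}$. This face arises from the single word $(1,w)$, so it is a surviving forbidden face whenever $c_w\neq0$ and $1+s_1\notin S$. For $S=\{1,s\}$ with $s\neq2$, neither $1+1$ nor $1+s$ lies in $S$, so $h(u)\notin\Omega$ for every nonzero $u$ of degree $\ge1$. The ``correction'' is therefore the whole homotopy, and you give no construction of it.

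\textbf{The stated identity is wrong.} For $m\ge1$ prepending gives $(\partial h+h\partial)f_w=\lambda f_w+\lambda^{s_1}f_{1,s_2,\dots,s_m}-f_{1+s_1,s_2,\dots,s_m}$. Its leading term is $\lambda\,\mathrm{Id}$ (or $\mathrm{Id}$ if the new vertex is placed before the old start), never $(1-\lambda)\mathrm{Id}$; a factor $\lambda-1$ appears only in degree $0$. The remainder contains a non-allowed path.

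\textbf{The filtration argument gives nothing.} Filtering by word length is vacuous within a fixed degree. The two remainder terms change $\sum s_i$ by $-(s_1-1)$ and by $+1$, so no triangularity follows. The claim that the determinants are products of the fixed polynomials $\sum_{k<\gamma_j}\lambda^k$ and $1-\lambda^{\gamma_j}$ is unsupported. Yet that claim is exactly what would bound the exceptional orders uniformly in the degree. This is the real difficulty, since $\dim\Omega_m(0)$ grows with $m$ (it equals $2m+1$ for $S=\{1,2,4\}$).

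The paper's route is different. It builds no homotopy; it defines $\mathcal Q_+(S)$ through cyclotomic factors of controlling minors of the symbol matrices $M_k(t)$. It then argues that for $\operatorname{ord}(\lambda)\notin\mathcal Q_+(S)$ each $M_k(\lambda)$ has its generic rank, so the block is exact for $\lambda\neq1$. That inference tacitly uses exactness of your uniform complex $\mathcal C_*(S)\otimes\K(t)$. Finiteness of $\mathcal Q_+(S)$ over infinitely many degrees is not argued there either. These are precisely the two points a complete proof must supply.

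A genuine $\Omega$-preserving $h$ with $\det(\partial h+h\partial)\not\equiv0$ in every degree would give the generic exactness. It would also supply the link to Definition~\ref{def:Qplus}, which you currently only assert. Once $\mathcal C_*(S)\otimes\K(t)$ is exact, non-exactness of the block at $\lambda$ forces some $M_k(\lambda)$ below its generic rank. Then all minors of that size vanish at $\lambda$, and $\Phi_{\operatorname{ord}\lambda}$ divides them. Without an explicit construction of $h$ and an actual computation of its determinant, part (1), and with it the finiteness of $\mathcal Q_+(S)$, remains unproved in your proposal.
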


\subsection{Organizations}

In Subsections \ref{sub:pathcpx} and \ref{sub:example1}, we recall the GLMY path complex, and work out a motivating example. In Subsection \ref{sub:circulant}, we
introduce circulant digraphs $\vec{C}_n^S$ as Cayley digraphs
of $\Z_n$. In Subsection \ref{sub:lowdim}, we analyze the role of the connection set $S$, and derive low-dimensional descriptions that lead to explicit computations in special families. In Section \ref{sec:Fourier}, we develop the shift automorphism $\tau$ and the discrete Fourier method to our theory, which yield the Fourier block decomposition (in Subsection \ref{sub:Fourier}) of the path complex and the corresponding decomposition of path homology. Under the “no wrap-around” choices of $S$, we obtain in Subsection \ref{sub:stability}, a stability result for the Betti numbers across most Fourier modes. In Section \ref{sec:circulantgraph}, We also briefly discuss undirected
circulant graphs, their relation to discrete tori, and the path homology of circulant graphs (viewed as
symmetric digraphs), together with illustrative examples. The appendix contains additional examples
emphasizing how additive relations in $S$ influence the symbol matrices and the resulting path homology.

\vskip 0.2cm
\noindent\textbf{Acknowledgments.} X. Tang is supported by the Youth Project No. 12501079 of NSFC and the start-up fund in BIMSA. Part of the paper is completed by X. Tang during her visit to SIMIS, and she extends her gratitude to the host for their warm reception and thoughtful arrangements.

\bigskip
\section{The path complex of digraphs}\label{path}

\subsection{The definition of path complex}\label{sub:pathcpx}

Given a finite set $V$. For $n\geq 0$, an elementary $n$-path in $V$ is a sequence
$p=(v_0,v_1,\dots,v_n)$ of vertices, also written as $e_{v_0v_1\ldots v_n}$.  

Let $\K$ be a field of characteristic 0, and $\Lambda_n(V;\K)$ be the $\K$-linear space spanned by the elementary $n$-paths. One can define a $\K$-linear map $\partial:\Lambda_n(V;\K)\rightarrow \Lambda_{n-1}(V;\K)$ as follows:
\begin{equation}\label{eq:boundary}
\partial e_{v_0\dots v_n}=\sum_{j=0}^n (-1)^j\,e_{v_0\dots\widehat v_j\dots v_n}.
\end{equation}
It is easy to check that $(\Lambda_*(G;\K),\partial)$ forms a complex.

The set of regular $n$-paths is defined as a quotient space $R_n(V;\K)$:
$$R_n(V;\K)=\Lambda_n(V;\K)/I_n(V;\K),$$
where $I_n(V;\K)$ is the sub-module generated by the irregular path $e_{v_0v_1\ldots v_n}$, where $v_{k-1}=v_k$ for some $k=1,\ldots,n$. 
\begin{proposition}
$(R_*(V;\K),\partial)$ forms a quotient chain complex with the induced boundary operator.
\end{proposition}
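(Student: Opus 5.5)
The claim is that $I_*(V;\K)$ is a subcomplex of $(\Lambda_*(V;\K),\partial)$, i.e. $\partial I_n\subseteq I_{n-1}$. Once this holds, $\partial$ descends to a well-defined map $R_n\to R_{n-1}$. The identity $\partial^2=0$ on $\Lambda_*$ then passes to the quotient, so $(R_*,\partial)$ is a chain complex. By linearity it suffices to check the inclusion on a single irregular elementary path $e_{v_0\dots v_n}$ with $v_{k-1}=v_k$ for some $1\le k\le n$.

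Expand $\partial e_{v_0\dots v_n}=\sum_j(-1)^j e_{v_0\dots\widehat{v_j}\dots v_n}$ and split the terms into two groups.

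\textbf{Terms with $j\notin\{k-1,k\}$.} Deleting $v_j$ keeps $v_{k-1}$ and $v_k$ as consecutive entries. So each such face still contains the repeated pair and lies in $I_{n-1}$.

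\textbf{Terms with $j=k-1$ and $j=k$.} Deleting $v_{k-1}$ or deleting $v_k$ gives the same sequence $(v_0,\dots,v_{k-2},v_k,v_{k+1},\dots,v_n)$, because $v_{k-1}=v_k$. These two terms carry the signs $(-1)^{k-1}$ and $(-1)^k$, so they cancel exactly in $\Lambda_{n-1}$.

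Therefore $\partial e_{v_0\dots v_n}\in I_{n-1}$. The only subtlety is this cancellation of the two middle faces: they need not be irregular, so they must be removed by the sign argument rather than absorbed into $I_{n-1}$. Edge cases such as $n=1$ (where $\partial e_{vv}=e_v-e_v=0$) are covered by the same computation.

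Finally, define $\bar\partial[x]=[\partial x]$ on $R_*$. This is well defined by the inclusion just proved, and $\bar\partial^2[x]=[\partial^2x]=0$.
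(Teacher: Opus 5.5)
Your proof is correct and complete. Reducing to generators, noting that faces deleting $v_j$ with $j\notin\{k-1,k\}$ stay irregular, and noting that the two faces deleting $v_{k-1}$ and $v_k$ coincide and cancel by sign, is the standard GLMY argument; the paper states this proposition without proof, and your argument supplies exactly the omitted verification.
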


\begin{definition}
Let $G=(V,E)$ be a finite digraph without self-loops, where $V$ is the set of vertices, and $E$ is the set of directed edges.

The space of allowed $n$-paths in $G$, denoted by $A_n(G;\K)$, is defined to be
$$A_n(G;\K)=\Span_{\K}\{e_{v_0v_1\cdots v_n}\in R_n(V;\K)~\big|~v_kv_{k+1}\in E, ~k=0,1,\ldots,n-1.\}.$$
\end{definition}

Since the boundary of an elementary $n$-path is the alternating sum of $(n-1)$-faces.
After deleting an interior vertex, two consecutive edges are merged into one, so a face may fail to be allowed.  That is, the boundary operator $\partial$ does not preserve the space $A_*(G;\K)$. GLMY defines the $\partial$-invariant subspaces $\Omega_n(G;\K)\subseteq A_n(G;\K)$ recursively by
\begin{align*}
&\Omega_0(G;\K)=A_0(G;\K),\qquad \Omega_1(G;\K)=A_1(G;\K),\\
&\Omega_n(G;\K)=\{u\in A_n(G;\K):\partial u\in A_{n-1}(G;\K)\}\qquad (n\ge 2),
\end{align*}
where $\partial u$ is computed in the free vector space on all faces, and then
restricted to $A_{n-1}(G;\K)$ by canceling non-allowed faces in linear combinations.
By construction, $\partial(\Omega_n(G;\K))\subseteq \Omega_{n-1}(G;\K)$, so $(\Omega_*(G;\K),\partial)$
is a chain complex, known as the path complex of the digraph $G$, or GLMY complex of $G$.  Its homology is called the path homology or GLMY homology of the digraph $G$:
\[
H_n^{\mathrm{path}}(G;\K)=H_n(\Omega_*(G;\K),\partial).
\]

\vskip 0.2cm
\subsection{An interesting example}\label{sub:example1}

In this subsection, we will illustrate an interesting example to help the readers understand the path homology of digraphs.

Let $G$ be the following digraph, as Figure \ref{fig:C512}. We will denote it by $\vec{C}_5^{1,2}$ in the next section.
\begin{figure}[h]
\centering
\begin{tikzpicture}[
scale=0.8, 
    >=Stealth,
    node/.style={inner sep=0.6pt} 
]

\node[node] (0) at (90:2)   {0};   
\node[node] (1) at (18:2)   {1};   
\node[node] (2) at (-54:2)  {2};   
\node[node] (3) at (-126:2) {3};   
\node[node] (4) at (162:2)  {4};   

\draw[->,line width=0.8pt] (0) -- (1);
\draw[->,line width=0.8pt] (0) -- (2);

\draw[->,line width=0.8pt] (1) -- (2);
\draw[->,line width=0.8pt] (1) -- (3);

\draw[->,line width=0.8pt] (2) -- (3);
\draw[->,line width=0.8pt] (2) -- (4);

\draw[->,line width=0.8pt] (3) -- (4);
\draw[->,line width=0.8pt] (3) -- (0);

\draw[->,line width=0.8pt] (4) -- (0);
\draw[->,line width=0.8pt] (4) -- (1);

\end{tikzpicture}
\caption{circulant digraph $G=\vec{C}_5^{1,2}$} 
\label{fig:C512} 
\end{figure}
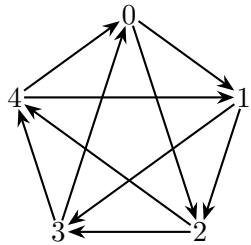

We have the following simple observations:
\begin{itemize}
  \item $e_{012}\in\Omega_2(G)$, $e_{013}\notin\Omega_2(G)$, $e_{013}-e_{023}\in\Omega_2(G)$;
  \item $e_{0123}\in\Omega_3(G)$, $e_{0124}-e_{0134}+e_{0234}\in\Omega_3(G)$;
  \item $e_{01234}\in\Omega_4(G)$, $e_{02340}-e_{01340}+e_{01240}-e_{01230}\in\Omega_4(G)$;
  \item For each vertex $a\in\mathbb{Z}_5$, the following elementary path is $\partial$-invariant:
  $$e_{a,(a+1),(a+2),\ldots,(a+n)}\in\Omega_{n}(G),\qquad a+j\pmod 5.$$
\end{itemize}

Thus, the path complex $\Omega_*(G;\mathbb{K})$ is infinite dimensional, where for each $n\in\Z_{\geq0}$, $\Omega_n(G;\mathbb{K})\neq0$. More explicitly, we have the following result.

\begin{lemma}\label{basis} For the circulant digraph $G=\vec{C}_5^{1,2}$, we have
\[\dim_{\mathbb{K}}\Omega_n(G;\mathbb{K})=10,\quad \forall~n\geq1.\]
Each vertex $a\in\Z_5$ gives two generators:
\begin{align*}
\alpha_a^{(n)}=~&e_{a,(a+1),(a+2),\ldots,(a+n)},\\
\beta_a^{(n)}=~&\sum_{j=1}^n(-1)^{n-j}e_{a,a+1,\ldots,a+j-1,a+j+1,a+j+2,\ldots,a+n+1}.
\end{align*}
Here all the vertices are understood modulo $5$.
\end{lemma}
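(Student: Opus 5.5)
The plan is to describe $\Omega_n(G)$ completely by translating allowed paths into words in the connection set, and then reading off the $\partial$-invariance condition as linear relations among the coefficients. Since $S=\{1,2\}$ and neither $1$ nor $2$ is $0$ mod $5$, every sequence $a, a+s_1, a+s_1+s_2,\dots$ with $s_i\in\{1,2\}$ is automatically regular. So an allowed $n$-path is the same as a pair $(a,w)$, where $a\in\Z_5$ is the starting vertex and $w=s_1\cdots s_n\in\{1,2\}^n$. Hence $\dim A_n=5\cdot 2^n$, and $A_n$ splits as a direct sum over starting vertices. The boundary preserves the starting vertex except for the $j=0$ face, which is always allowed, so the $\partial$-invariance condition decouples by starting vertex $a$. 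Write $u=\sum_w c(w)\,e_{(a,w)}$.

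Next I classify the faces of $e_{(a,w)}$. Deleting $v_0$ or $v_n$ gives an allowed path. Deleting the interior vertex $v_i$ merges $s_i,s_{i+1}$ into a single step $s_i+s_{i+1}\in\{2,3,4\}$. This is allowed exactly when the merged step is $1+1=2$, because $3\equiv-2$ and $4\equiv-1$ are not in $S$ mod $5$. A non-allowed face is the elementary path with start $a$ and step word $x\,t\,y$, where $t\in\{3,4\}$ is the single bad step. It determines $i$, $x$, $y$ and $t$ uniquely, so its only preimages among faces of allowed paths (with start $a$) are:
\begin{itemize}
\item for $t=4$, the single word $x22y$;
\item for $t=3$, the two words $x12y$ and $x21y$, both obtained by deleting $v_i$ and so carrying the same sign $(-1)^i$.
\end{itemize}
Therefore $u\in\Omega_n$ if and only if $c(x22y)=0$ and $c(x12y)+c(x21y)=0$ for all $x,y$.

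Now I solve these relations on the graph whose vertices are the words and whose edges are the moves $12\leftrightarrow 21$; these moves preserve the number $k$ of $2$'s.
\begin{itemize}
\item For $k=0$ there are no relations, and the single word $1^n$ gives $\alpha^{(n)}_a$.
\item For $k=1$ the words $1^{j-1}21^{n-j}$, $j=1,\dots,n$, form one chain, and each move flips the sign. The solution space is therefore one-dimensional, spanned by the alternating sum $\beta^{(n)}_a$. I will check that the $j$-th term of $\beta^{(n)}_a$ is exactly the path whose step $2$ sits in position $j$, since the vertex $a+j$ is skipped.
\item For $k\ge2$, any word can be moved, with only sign changes, to a word in which two $2$'s are adjacent, for example by sliding the second $2$ leftwards until it meets the first. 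That word has coefficient $0$, so the whole class vanishes.
\end{itemize}
This gives exactly two independent generators per starting vertex, hence $\dim\Omega_n=10$ for $n\ge1$. For $n=1$ the statement just says $A_1=\Omega_1$ has dimension $10$, which agrees, since $\beta_a^{(1)}=e_{a,a+2}$.

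The step needing the most care is the uniqueness of preimages of the bad faces. A coincidence mod $5$ could in principle identify faces of different words, for instance the merged step $3$ being confused with another step pattern, or a face of one start matching a face of another. I would argue this directly. The face determines its vertex sequence, which determines its start and its consecutive differences in $\Z_5$. Exactly one of these differences lies outside $\{1,2\}$, and that difference determines $t$. Hence $x$, $t$, $y$ and the deleted index are all recovered.
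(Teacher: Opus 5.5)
Your proposal is correct and takes essentially the same approach as the paper. Both encode allowed paths by start vertex and step word in $\{1,2\}^n$, and both read off from the forbidden faces that words containing $22$ vanish and that $12\leftrightarrow 21$ swaps flip sign. Both then kill every word with at least two $2$'s by sliding one $2$ next to another, which leaves one free parameter for $1^n$ and one for the alternating one-$2$ chain. The difference is that you prove these relations are necessary and sufficient, and you justify the decoupling by starting vertex and the uniqueness of preimages of bad faces mod $5$, points the paper only asserts. As a result, membership of $\alpha_a^{(n)},\beta_a^{(n)}$ and the bound $\dim\Omega_n(a)\le 2$ come from a single computation, whereas the paper checks membership and independence separately.
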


\begin{proof} (1) It is easy to check that 
\[\alpha_a^{(n)},\beta_a^{(n)}\in\Omega_n(G;\mathbb{K}).\]
And $\alpha_a^{(n)}$ is the unique chain containing the elementary path $e_{a,a+1,\dots,a+n}$, and $\beta_a^{(n)}$ is the unique chain containing the elementary path $e_{a,a+2,a+3,\dots,a+n+1}$. Thus, all of the ten paths are linearly independent. Then we have $\dim_{\mathbb{K}}\Omega_n(G;\mathbb{K})\geq10$. 

On the other hand, for each vertex $a\in\Z_5$, let $A_n(a)\subset A_n$ be the span of allowed $n$-paths starting at $a$.
Then $A_n=\bigoplus_{a\in\Z_5} A_n(a)$ and $\Omega_n=\bigoplus_{a\in\Z_5} \Omega_n(a)$ where
\[\Omega_n(a):=\Omega_n\cap A_n(a).\]
It follows from the following lemma \ref{lem:fixed-start-2dof} that $\dim \Omega_n(a)\le 2$  .

Thus, $\dim_{\mathbb{K}}\Omega_n(G;\mathbb{K})\leq10$, $n\geq1$. Then $\left\{\alpha_a^{(n)},\beta_a^{(n)}\right\}_{a\in\Z_5}$ is a basis of $\Omega_{n}(G;\mathbb{K})$.
\end{proof}

\begin{lemma}
\label{lem:fixed-start-2dof}
Fix $n\ge 2$ and $a\in\Z_5$.
Encode an allowed $n$-path starting at $a$ by its step word
\[w=(\varepsilon_1,\dots,\varepsilon_n)\in\{1,2\}^n,\]
which means that the path is
\[e_w=e_{a, a+\varepsilon_1, a+\varepsilon_1+\varepsilon_2, \cdots,~ a+\varepsilon_1+\cdots+\varepsilon_n}\quad(\mathrm{mod}\ 5).\]
Write any chain in $A_n(a)$ as $P=\sum_w c_w\,e_w$.
Then the condition $P\in\Omega_n(a)$ forces:
\begin{enumerate}
\item If $w$ contains the substring $22$ anywhere, then $c_w=0$.
\item For each position $j\in\{1,\dots,n-1\}$ and for each fixed context (letters outside positions $j,j+1$),
if $w=\cdots 12\cdots$ and $w'=\cdots 21\cdots$ differ only by swapping $12\leftrightarrow 21$ at $j,j+1$, then
\[c_w+c_{w'}=0.\]
\end{enumerate}
Consequently, $c_w=0$ for every word containing at least two occurrences of $2$.
Moreover, among words containing exactly one $2$, the coefficients are determined up to a single scalar by the sign rule
obtained from repeated swaps.
Therefore $\dim\Omega_n(a)\le 2$.
\end{lemma}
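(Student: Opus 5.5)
The plan is to compute the coefficient of every non-allowed face of $\partial P$ and set it to zero. The key point is that a non-allowed face determines its parent paths almost uniquely, so each such coefficient is a very short linear expression in the $c_w$.

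\textbf{Faces.} Take $e_w$ with $w=(\varepsilon_1,\dots,\varepsilon_n)$.
\begin{itemize}
\item Deleting the first or the last vertex gives an allowed path, with step word $(\varepsilon_2,\dots,\varepsilon_n)$ or $(\varepsilon_1,\dots,\varepsilon_{n-1})$.
\item Deleting the interior vertex $a+\varepsilon_1+\cdots+\varepsilon_j$, for $1\le j\le n-1$, merges two steps into one step $\varepsilon_j+\varepsilon_{j+1}\in\{2,3,4\}$. This sum is never $0$ mod $5$, so the face is regular.
\item The merged step $1+1=2$ gives an allowed face. The merged steps $1+2=2+1=3$ and $2+2=4$ are not residues in $S=\{1,2\}$ mod $5$, so those faces are not allowed.
\end{itemize}

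\textbf{Locating the parents of a non-allowed face.} A non-allowed face starting at $a$ is recorded by its step word. This word has length $n-1$, exactly one letter in $\{3,4\}$, at some position $j$, and all other letters in $\{1,2\}$. So the face determines the deleted position $j$ and all letters of the parent outside positions $j,j+1$. It cannot come from deleting an endpoint, since those faces are allowed. Its parents are therefore exactly the words $w$ with the given context and $\varepsilon_j+\varepsilon_{j+1}$ equal to the merged letter. Every such face enters $\partial e_w$ with the same sign $(-1)^j$.

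\textbf{Reading off the conditions.} $P\in\Omega_n(a)$ means the coefficient of every non-allowed face in $\partial P$ vanishes.
\begin{itemize}
\item For a merged letter $4$ the only parent has $22$ at positions $j,j+1$. So $(-1)^jc_w=0$, which is condition (1).
\item For a merged letter $3$ the parents are the two words with $12$ and $21$ there. So $(-1)^j(c_w+c_{w'})=0$, which is condition (2).
\end{itemize}

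\textbf{Consequences.}
\begin{itemize}
\item Let $w$ contain at least two $2$'s. Choose two consecutive occurrences with only $1$'s between them. Repeated adjacent swaps $21\leftrightarrow12$ move them together, and each swap only changes the coefficient by a sign. This reaches a word containing $22$, whose coefficient is zero, so $c_w=0$.
\item The words with exactly one $2$ are $1^{k}21^{n-1-k}$ for $0\le k\le n-1$. Consecutive ones differ by a single swap, so $c_{1^k21^{n-1-k}}=(-1)^{k}c_{21^{n-1}}$. That is one free scalar.
\item The word $1^n$ gives one further scalar.
\end{itemize}
Hence $\dim\Omega_n(a)\le 2$.

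The only delicate step is the uniqueness claim for the parents of a non-allowed face. It uses the fact that residues mod $5$ separate the possible merged steps: $3$ and $4$ are distinct, not in $S$, and not $0$. It also uses that the single letter in $\{3,4\}$ pins down the deletion position, so no face can arise from two different positions. Once this is in place, the rest is the combinatorics of swaps above.
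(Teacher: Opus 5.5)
Your proposal is correct and follows essentially the same route as the paper. You read off the forbidden-face coefficients: merged step $4$ arises only from $22$, and merged step $3$ arises from $12$ or $21$ with a common sign $(-1)^j$. You then use swaps to kill every word with two or more $2$'s and to tie the one-$2$ words to a single scalar, with $1^n$ giving the other scalar. Your explicit check that the unique letter in $\{3,4\}$ pins down the deletion position, and that merged steps are nonzero mod $5$, makes the uniqueness-of-parents step slightly more rigorous than the paper's version.
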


\begin{proof}
By definition of the boundary operator $\partial$, 
if the consecutive steps are $(\varepsilon_j,\varepsilon_{j+1})$, then the merged step is
$\varepsilon_j+\varepsilon_{j+1}\in\{2,3,4\}$.
In this circulant digraph $G$, only steps $1$ and $2$ are edges, so a merged step $3$ or $4$ produces a forbidden face.
The definition $x\in\Omega_n(a)$ is equivalent to requiring that every forbidden face has coefficient $0$ in $\partial x$.
We now read off the resulting linear equations.

\smallskip
\noindent\textbf{(i) Words containing $22$ vanish.}
Suppose $w$ contains $22$ at positions $j,j+1$.
Deleting the interior vertex between these two steps produces a face with a merged step $4$ at that location.
No other allowed word produces the same face with a step $4$ at the same location (because the only way to get $4$ is $2+2$).
Hence the coefficient of that forbidden face in $\partial x$ is exactly $\pm c_w$.
Requiring it to be $0$ forces $c_w=0$.

\smallskip
\noindent\textbf{(ii) Swapping $12$ and $21$ flips sign.}
Fix $j\in\{1,\dots,n-1\}$ and fix all letters of the word outside positions $j,j+1$.
There are exactly two allowed local patterns whose sum is $3$, namely $12$ and $21$.
Deleting the interior vertex between these two steps produces the same forbidden face with merged step $3$.
Moreover, in \eqref{eq:boundary} the sign $(-1)^j$ attached to deleting that vertex is the same for both words.
Therefore the coefficient of this forbidden face in $\partial x$ is $\pm(c_w+c_{w'})$, and requiring it to vanish gives
$c_w+c_{w'}=0$.

\smallskip
\noindent\textbf{(iii) Eliminating words with two or more $2$'s.}
Let $w$ contain at least two $2$'s.
If it contains $22$, then $c_w=0$ by (i).
Otherwise, between any two $2$'s there is at least one $1$.
Using (ii), we may swap an adjacent $21$ to $12$ (picking up a sign) and thereby move a $2$ one position to the right.
Repeating, we can move a $2$ across a string of $1$'s until it becomes adjacent to the next $2$, producing a word that contains $22$.
The coefficient of that adjacent word is $\pm c_w$ after the sequence of swaps.
By (i) it must be $0$, hence $c_w=0$.

\smallskip

In summary, the only potentially nonzero coefficients are:
\begin{itemize}
    \item[(a)] the all-$1$ word $1\cdots 1$ (one scalar), and
    \item[(b)] words with exactly one $2$. Any two such words differ by moving the unique $2$ through adjacent swaps; by (ii) each swap flips the sign, so all these coefficients
are determined by a single scalar.
\end{itemize}
Thus $\Omega_n(a)$ is spanned by two elements, hence $\dim\Omega_n(a)\le 2$.
\end{proof}

\begin{theorem}\label{thm1} The path homologies of the circulant digraph $G$ are given by
\[H_n^{\mathrm{path}}(G;\mathbb{K})\cong\begin{cases}
\mathbb{K} &\quad n=0,1\\
0 &\quad n\geq2
\end{cases}~.\]
\end{theorem}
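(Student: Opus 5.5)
The plan is to compute the boundary map explicitly in the basis $\{\alpha_a^{(n)},\beta_a^{(n)}\}_{a\in\Z_5}$ of Lemma \ref{basis}, and then find ranks by exploiting the shift symmetry $\tau:a\mapsto a+1$. Since the ranks of the differentials do not change under field extension, we may assume $\K$ contains the fifth roots of unity.

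\textbf{Step 1: the boundary of $\alpha$.} For $\alpha_a^{(n)}$ we split the boundary into three kinds of faces.
\begin{itemize}
\item Deleting the first vertex gives $\alpha_{a+1}^{(n-1)}$.
\item Deleting the last vertex gives $(-1)^n\alpha_a^{(n-1)}$.
\item Deleting the interior vertex $a+j$, for $1\le j\le n-1$, gives the path whose step word has a single $2$ in position $j$, with sign $(-1)^j$.
\end{itemize}
Comparing with the signs $(-1)^{n-1-j}$ in $\beta_a^{(n-1)}$, the interior terms sum to $(-1)^{n-1}\beta_a^{(n-1)}$. Hence
\[
\partial\alpha_a^{(n)}=\alpha_{a+1}^{(n-1)}+(-1)^n\alpha_a^{(n-1)}+(-1)^{n-1}\beta_a^{(n-1)} .
\]

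\textbf{Step 2: the boundary of $\beta$.} Here the terms are messier. Deleting the first vertex of the $j=1$ term gives $\alpha_{a+2}^{(n-1)}$, while the first-vertex deletions of the other terms assemble into part of $\beta_{a+1}^{(n-1)}$. Deleting the last vertex gives a multiple of $\beta_a^{(n-1)}$. The interior deletions fall into two groups.
\begin{itemize}
\item Faces with a merged step $3$ cancel, since $\beta_a^{(n)}\in\Omega_n$.
\item Faces with merged steps $1+1$ produce words with two $2$'s, or reproduce single-$2$ words.
\end{itemize}
Since $\partial\beta_a^{(n)}\in\Omega_{n-1}$, Lemma \ref{lem:fixed-start-2dof} guarantees that it is a combination of $\alpha_b^{(n-1)}$ and $\beta_b^{(n-1)}$ for $b\in\{a,a+1,a+2\}$. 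To find the coefficients, it suffices to read off the coefficients of the distinguished paths $e_{b,b+1,\dots}$ and $e_{b,b+2,b+3,\dots}$. The result should be of the form
\[
\partial\beta_a^{(n)}=\pm\alpha_{a+2}^{(n-1)}\pm\alpha_{a+1}^{(n-1)}+c_1\beta_{a+1}^{(n-1)}+c_2\beta_a^{(n-1)},
\]
with explicit signs depending on the parity of $n$. I expect this sign bookkeeping to be the main obstacle. I would first carry it out for $n=2,3,4$ using the examples listed before Lemma \ref{basis}, and then check the general pattern by induction on $n$.

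\textbf{Step 3: Fourier blocks and ranks.} Set
\[
\alpha^{(n)}(\lambda)=\sum_a\lambda^{-a}\alpha_a^{(n)},\qquad \beta^{(n)}(\lambda)=\sum_a\lambda^{-a}\beta_a^{(n)},\qquad \lambda^5=1 .
\]
The complex then splits into blocks. In each block the chain space is $1$-dimensional in degree $0$ and $2$-dimensional in each degree $n\ge1$, and $\partial$ becomes a $2\times2$ matrix $D_n(\lambda)$ with entries that are Laurent polynomials in $\lambda$. In degree $0$, $\Omega_1\to\Omega_0$ sends the edge $a\to a+1$ to $(\lambda-1)$ and the edge $a\to a+2$ to $(\lambda^2-1)$, up to the normalization. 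So $D_1(\lambda)$ has rank $1$ exactly when $\lambda\ne1$.

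For $\lambda\neq1$ we show exactness of the block: $\operatorname{rank}D_n(\lambda)+\operatorname{rank}D_{n+1}(\lambda)=2$ for all $n\ge1$. For $\lambda=1$, Step 1 gives $\partial\alpha_a^{(1)}$ and $\partial\beta_a^{(1)}$ landing on zero classes in $H_0$, so $D_1(1)=0$. We then check that $\operatorname{rank}D_2(1)=1$ and $\operatorname{rank}D_n(1)+\operatorname{rank}D_{n+1}(1)=2$ for $n\ge2$. Concretely, the $\alpha$-part of $\partial\alpha$ at $\lambda=1$ has coefficient $1+(-1)^n$, which alternates between $0$ and $2$, while the $\beta$-coefficient is $\pm1$. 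So the ranks should alternate between $1$ and $1$, giving exactness in degrees $\ge2$.

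The conclusion is that $H_0=H_1=\K$, both coming from the $\lambda=1$ block, and that all higher homology vanishes. A determinant computation for each $D_n(\lambda)$ handles the $\lambda\ne1$ case uniformly. The key point there is that $D_n(\lambda)D_{n+1}(\lambda)=0$ together with $\det D_n(\lambda)=0$ and $D_n(\lambda)\neq0$ forces each $D_n(\lambda)$ to have rank exactly $1$, and rank $1$ in every degree gives exactness.
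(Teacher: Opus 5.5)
Your proposal follows essentially the same route as the paper. You compute $\partial$ on the basis $\{\alpha_a^{(n)},\beta_a^{(n)}\}$, pass to the Fourier modes $\alpha^{(n)}_\lambda,\beta^{(n)}_\lambda$, and show that every $2\times 2$ block of $\partial_n$ ($n\ge 2$) has rank exactly one. Your Step 1 formula is correct, and it is the one the paper actually uses in its Fourier computation; the paper's displayed recursion has a misprint, $\beta_{a+1}^{(n-1)}$ in place of $\alpha_{a+1}^{(n-1)}$. Your base-change remark and your block-by-block treatment of degrees $0,1,2$ make explicit what the paper dismisses as ``easy to compute.''

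One concrete correction: the form you predict for $\partial\beta_a^{(n)}$ in Step 2 is wrong.
\begin{itemize}
\item Deleting the last vertex of the summand $e_{a,a+1,\dots,a+n-1,a+n+1}$ gives $(-1)^n\alpha_a^{(n-1)}$, not a multiple of $\beta_a^{(n-1)}$.
\item No $\alpha_{a+1}^{(n-1)}$ term occurs.
\item Interior $1+1$ merges never reproduce single-$2$ words; they always produce two $2$'s, and those terms cancel.
\end{itemize}
With $s_n=(-1)^n$, the correct formula, valid for all $n\ge 2$, is
\[
\partial\beta_a^{(n)}=s_n\alpha_a^{(n-1)}-s_n\alpha_{a+2}^{(n-1)}-s_n\beta_a^{(n-1)}+\beta_{a+1}^{(n-1)},
\]
so that $\partial\beta^{(n)}_\lambda=s_n(1-\lambda^2)\alpha^{(n-1)}_\lambda+(\lambda-s_n)\beta^{(n-1)}_\lambda$.

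This does not damage your argument, and in fact Step 2 can be skipped entirely. Step 1 already shows $D_n(\lambda)\neq 0$ for every $n\ge 2$ and every $\lambda$, since the $\beta$-coefficient of $\partial\alpha^{(n)}_\lambda$ is $-s_n$. Then $D_n(\lambda)D_{n+1}(\lambda)=0$ with $D_{n+1}(\lambda)\neq 0$ forces $D_n(\lambda)$ to be non-injective, so $\det D_n(\lambda)=0$ and $D_n(\lambda)$ has rank one. The paper, by contrast, obtains $\det=0$ by computing both columns explicitly.
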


\begin{proof}

 Let $s_n:=(-1)^n$. Using the basis in Lemma \ref{basis}, we have the following explicit formulas
(for $n\ge 3$, and again all indices mod $5$):
\begin{equation}\label{eqrec}
\begin{aligned}
 \partial \alpha_a^{(n)}&=s_n\alpha_a^{(n-1)}+\beta_{a+1}^{(n-1)}- s_n\beta_a^{(n-1)},\\
 \partial \beta_a^{(n)}&=s_n\alpha_a^{(n-1)}-s_n\alpha_{a+2}^{(n-1)}- s_n\beta_a^{(n-1)}+\beta_{a+1}^{(n-1)}.
\end{aligned}
\end{equation}
\vskip 0.2cm

Consider the digraph map (rotation or shift automorphism)
$$\tau:G\rightarrow G,\quad \tau(a)=a+1\pmod5.$$
Immediately we have
\begin{itemize}
\setlength\itemsep{0pt}
\item $\tau$ is an isomoprhism of $G$;
\item $\Omega_n(G;\mathbb{K})$ is $\tau$-invariant; in particular,
\[\tau\alpha_a^{(n)}=\alpha_{a+1}^{(n)},\qquad \tau\beta_a^{(n)}=\beta_{a+1}^{(n)}.\]
\item The boundary operator $\partial$ commutes with $\tau$.
\end{itemize}

Then we can compute the rank of $\partial_n$ by decomposing into $\tau$-eigenspaces. Over a field containing a primitive fifth root of unity (e.g. over $\mathbb{C}$), $\tau$ is diagonalizable with eigenvalues $\lambda$ satisfying $\lambda^5=1$.

For such a $\lambda$, define the Fourier modes
\[
\alpha^{(n)}_\lambda:=\sum_{a\in\Z_5}\lambda^{-a}\,\alpha^{(n)}_a,\qquad
\beta^{(n)}_\lambda:=\sum_{a\in\Z_5}\lambda^{-a}\,\beta^{(n)}_a.
\]
A change of variables $b=a+1$ shows
\[
\tau \alpha^{(n)}_\lambda=\sum_a \lambda^{-a}\alpha^{(n)}_{a+1}
=\sum_b \lambda^{-(b-1)}\alpha^{(n)}_b
=\lambda\sum_b \lambda^{-b}\alpha^{(n)}_b
=\lambda \alpha^{(n)}_\lambda,
\]
and similarly $\tau \beta^{(n)}_\lambda=\lambda \beta^{(n)}_\lambda$.  Since the $\lambda$-eigenspace inside $\Omega_n$ has
dimension $2$ (one $\alpha$-type and one $\beta$-type generator), the pair $(\alpha^{(n)}_\lambda,\beta^{(n)}_\lambda)$ spans the $\lambda$-eigenspace.

Now applying the explicit boundary recursions \eqref{eqrec}, we can compute
\begin{align*}
\partial_n \alpha^{(n)}_\lambda
&=\sum_a \lambda^{-a}\partial_n \alpha^{(n)}_a\\
&=\sum_a \lambda^{-a}\Bigl(s_n\alpha^{(n-1)}_a+\alpha^{(n-1)}_{a+1}-s_n\beta^{(n-1)}_a\Bigr)\\
&=s_n\alpha^{(n-1)}_\lambda+\sum_a \lambda^{-a}\alpha^{(n-1)}_{a+1}-s_n\beta^{(n-1)}_\lambda\\
&=s_n\alpha^{(n-1)}_\lambda+\lambda \alpha^{(n-1)}_\lambda-s_n\beta^{(n-1)}_\lambda\\
&=(s_n+\lambda)\alpha^{(n-1)}_\lambda+(-s_n)\beta^{(n-1)}_\lambda.
\end{align*}
Similarly, we have
\begin{align*}
\partial_n \beta^{(n)}_\lambda
&=\sum_a \lambda^{-a}\partial_n \beta^{(n)}_a\\
&=\sum_a \lambda^{-a}\Bigl(s_n\alpha^{(n-1)}_a-s_n\alpha^{(n-1)}_{a+2}-s_n\beta^{(n-1)}_a+\beta^{(n-1)}_{a+1}\Bigr)\\
&=s_n\alpha^{(n-1)}_\lambda-s_n\sum_a \lambda^{-a}\alpha^{(n-1)}_{a+2}-s_n\beta^{(n-1)}_\lambda+\sum_a \lambda^{-a}\beta^{(n-1)}_{a+1}\\
&=s_n\alpha^{(n-1)}_\lambda-s_n\lambda^2\alpha^{(n-1)}_\lambda-s_n\beta^{(n-1)}_\lambda+\lambda\beta^{(n-1)}_\lambda\\
&=s_n(1-\lambda^2)\alpha^{(n-1)}_\lambda+(\lambda-s_n)\beta^{(n-1)}_\lambda.
\end{align*}
Thus, with respect to the eigenbases $(\alpha^{(n)}_\lambda,\beta^{(n)}_\lambda)$ and $(\alpha^{(n-1)}_\lambda,\beta^{(n-1)}_\lambda)$, the matrix of $\partial_n$ is given by
\[M_{s_n}(\lambda)=\begin{pmatrix}
s_n+\lambda & -s_n\\
s_n(1-\lambda^2) & \lambda-s_n.
\end{pmatrix}\]

A direct determinant computation gives (using $s_n=(-1)^n$)
\[\det M_{s_n}(\lambda)=(s_n+\lambda)(\lambda-s_n)-(-s_n)\cdot s_n(1-\lambda^2)=(\lambda^2-1)+(1-\lambda^2)=0.\]
Thus each block has rank at most 1. The entry $-s_n\neq0$, thus $M_{s_n}(\lambda)$ is a rank one 2-by-2 matrix. Since there are five eigenvalues, the total rank is $5$. Thus, for $n\geq3$, we have
\begin{align*}
 &\dim\im(\partial_n)=\rank(\partial_n)=5,\\
 &\dim\ker(\partial_n)=\dim\Omega_n-\rank{\partial_n}=10-5=5.
\end{align*}
Thus $H_n^{\mathrm{path}}(G;\K)=0$, for $n\geq3$. It is easy to compute that 
\[H_2^{\mathrm{path}}(G;\K)=0,\quad H_1^{\mathrm{path}}(G;\K)=\K[e_{01}+e_{12}+e_{23}+e_{34}+e_{40}],\quad H_0^{\mathrm{path}}(G;\K)\cong\K.\]
\end{proof}

\begin{remark}
For simplicity, we introduce the stronger condition in our paper \cite{TY1,TY2}, and define the resulting path complex $H^{\sr}(G;\K)$ and cellular complex $H^{\cell}(G;\K)$ of the digraph $G$. In particular, we compute the two homologies (Example 4.3 in \cite{TY2}) of this example, we also get
\[
H_n^{\sr}(G;\K)\cong H_n^{\cell}(G;\K)\cong H_n^{\mathrm{path}}(G;\K)\cong
\begin{cases}
    \K & n=0,1\\
    0 & n\geq2
\end{cases}~.
\]
\end{remark}

\bigskip
\section{Circulant digraphs and the path complex}
In this section, we systematically study the path complex of circulant digraphs and compute the path homology in some special cases.
\subsection{Circulant digraphs}\label{sub:circulant}

\begin{definition}[Cayley digraph]
Let $H$ be a (finite) group and $S\subset H$ with the unit $1\notin S$. The \emph{Cayley digraph} $G=\Cay(H,S)$ has vertex set $V=H$ and a directed edge $a\rightarrow as$ for each $a\in H$, $s\in S$.    
\end{definition}

\begin{remark}
If $S=S^{-1}$, we usually regard $G$ as an undirected $|S|$-regular Cayley graph.
\end{remark}

\begin{proposition}
There are many basic properties for the Cayley digraphs:
\begin{itemize}
    \item \textbf{vertex transitivity:} Left mulitiplication $L_h:a\mapsto ha$ is an automorphism of $\Cay(H,S)$ for every $h\in H$. In particular, Cayley digraphs are vertex-transitive.
    \item \textbf{connectivity:} $\Cay(H,S)$ is (strongly) connected if and only if $\langle S\rangle=H$.
\end{itemize}
\end{proposition}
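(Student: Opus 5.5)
The statement to prove is the last proposition: left multiplication is a digraph automorphism, and $\Cay(H,S)$ is strongly connected if and only if $\langle S\rangle=H$.

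\textbf{Vertex transitivity.} Fix $h\in H$.
\begin{itemize}
\item $L_h:a\mapsto ha$ is a bijection of $V=H$, with inverse $L_{h^{-1}}$.
\item It preserves edges by associativity. An edge $a\to as$ with $s\in S$ is sent to $ha\to h(as)=(ha)s$, which is again an edge labelled by the same $s$.
\item The inverse $L_{h^{-1}}$ also maps edges to edges by the same computation. So $L_h$ induces a bijection on $E$ and is a digraph automorphism.
\item For vertices $a,b$, take $h=ba^{-1}$. Then $L_h(a)=b$, so the automorphism group acts transitively on vertices.
\end{itemize}

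\textbf{Connectivity: strongly connected implies $\langle S\rangle=H$.} Suppose $\Cay(H,S)$ is strongly connected, and let $g\in H$. There is a directed path $1=v_0\to v_1\to\cdots\to v_k=g$. Each step has the form $v_{i}=v_{i-1}s_i$ with $s_i\in S$. Hence $g=s_1s_2\cdots s_k\in\langle S\rangle$.

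\textbf{Connectivity: $\langle S\rangle=H$ implies strongly connected.} This direction has the only subtle point: we may not use inverses of elements of $S$, since edges are directed.
\begin{itemize}
\item Because $H$ is finite, every $s\in S$ has finite order $m$, so $s^{-1}=s^{m-1}$. Thus the subgroup $\langle S\rangle$ equals the monoid generated by $S$.
\item So for any $a,b$, write $a^{-1}b=s_1\cdots s_k$ with all $s_i\in S$.
\item Then $a\to as_1\to as_1s_2\to\cdots\to as_1\cdots s_k=b$ is a directed path from $a$ to $b$.
\end{itemize}

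The main obstacle is exactly this use of finiteness to replace inverses by positive powers. Without it (for infinite $H$), the reverse implication could fail.
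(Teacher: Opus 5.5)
Your proof is correct. The paper states this proposition without proof as a standard fact, and your argument is the standard one: associativity sends the edge $a\to as$ to $ha\to(ha)s$; a directed path from $1$ to $g$ exhibits $g=s_1\cdots s_k$; and finiteness of $H$ gives $s^{-1}=s^{\operatorname{ord}(s)-1}$, so $\langle S\rangle$ equals the monoid generated by $S$. The same reasoning also covers the weak reading of ``(strongly) connected'': a weak path from $1$ to $g$ writes $g$ as a word in $S\cup S^{-1}$, so $g\in\langle S\rangle$, and strong connectivity trivially implies weak connectivity.
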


Fix an integer $n\geq 3$.  Let $\mathbb Z_n=\mathbb Z/n\mathbb Z$ and let $S\subset\Z_n$ be a subset, $0\notin S$.

The \emph{circulant directed graph} $\vec{C}_n^S$ is the Cayley digraph 
$\Cay(\Z_n,S)$. That is, it has vertex set $V=\mathbb Z_n$ and an arrow
\[
a\longrightarrow a+s\quad\text{for each }a\in\mathbb Z_n,\ s\in S.
\]
This is the Cayley digraph of the cyclic group $\mathbb Z_n$ with connection set $S$. The example in the previous section is exactly $\vec{C}_5^{1,2}$. The circulant digraphs are important from several viewpoints: symmetry and classification, spectral/Fourier analysis, extremal and structural graph theory, algorithmic routing and fault-tolerance, network design, and interactions with topology (path homology) and operator algebras (graph $C^\ast$-algebras). 

\begin{remark}
One can also realize the circulant digraph from the matrix viewpoint. That is, the adjacency matrix of $\vec{C}_n^S$ is a circulant matrix. For $\vec{C}_5^{1,2}$, its adjacency matrix is given by
\[A=\begin{pmatrix}
    0 & 1 & 1 & 0 & 0\\
    0 & 0 & 1 & 1 & 0\\
    0 & 0 & 0 & 1 & 1\\
    1 & 0 & 0 & 0 & 1\\
    1 & 1 & 0 & 0 & 0
\end{pmatrix}.\]
Such a circulant matrix is diagonal by the discrete Fourier transform. This is the engine behind explicit spectral formulas and fast algorithms. 
\end{remark}

Now let us focus on the structure of path complex of the circulant digraph $G=\vec{C}_n^S$. 

First, we write the elementary $m$-path $e_{v_0,v_1\ldots,v_m}$ as
\begin{equation}\label{elementarypath2}
(v_0;~s_1,\ldots,s_m),\qquad s_j=v_j-v_{j-1},~~~j=1,2\ldots,m.
\end{equation}
It is an allowed elementary $m$-path if and only if all $s_j\in S$.

Let $p_m$ be the elementary $m$-path in $G$. 
Then
\[A_m(G)=\Span_{\K}\{p_m\}\cong \K[\Z_n]\otimes (\K^S)^{\otimes m}.\]

If $j$ is a starting or ending vertex of an allowed elementary path, the face is automatically allowed.
If $1\le j\le n-1$ is internal, deleting $v_j$ merges two consecutive steps:
\[
(s_j,s_{j+1}) \mapsto s_j+s_{j+1} \pmod n.
\]
Hence an internal face is allowed if and only if $s_j+s_{j+1}\in S$ (mod $n$).

\subsection{Low-dimensional path chains \texorpdfstring{$\Omega_2$, $\Omega_3$}{Omega2, Omega3} in circulant digraphs}\label{sub:lowdim}

Define a set
\[F:=(S+S)\setminus S \pmod n.\]
And we call $F$ the set of illegal merged steps. For $r\in\Z_n$, define the decomposition number
\[N(r):=\#\{(s,t)\in S\times S:\ s+t\equiv r\pmod n\}.\]

\subsubsection{\texorpdfstring{$\Omega_2(\vec{C}_n^S)$}{Omega2}}
\begin{proposition}[Linear constraints for $\Omega_2$]\label{prop:Omega2}
Fix a starting vertex $a\in\Z_n$.  An allowed $2$-path
$$
u=\sum_{s,t\in S} c_{s,t}\,e_{a,a+s,a+s+t}\in \Omega_2
$$
if and only if for every illegal $r\in F$, we have
\[
\sum_{s+t=r} c_{s,t}=0.
\]
Equivalently, for each illegal $r\in F$, the coefficients on the $N(r)$ paths with merged step $r$
have total sum $0$, hence contribute a subspace of dimension $N(r)-1$ (if $N(r)\ge 1$).
\end{proposition}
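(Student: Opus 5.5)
The plan is to compute $\partial u$ directly in the free space $\Lambda_1$ and to read off which coefficients must vanish for $u$ to lie in $\Omega_2$. Every $u$ of the stated form is already allowed, since each of $e_{a,a+s,a+s+t}$ has both steps in $S$. Because $\Omega_2=\{u\in A_2:\partial u\in A_1\}$, membership in $\Omega_2$ is equivalent to the vanishing, in $\partial u$, of the coefficient of every non-allowed elementary $1$-path. The expansion is
\[
\partial u=\sum_{s,t\in S}c_{s,t}\bigl(e_{a+s,a+s+t}-e_{a,a+s+t}+e_{a,a+s}\bigr).
\]
The outer faces $e_{a+s,a+s+t}$ and $e_{a,a+s}$ are edges of $\vec{C}_n^S$, so they lie in $A_1$ automatically. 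This matches the remark in Subsection \ref{sub:circulant} that faces obtained by deleting an endpoint are always allowed.

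The only possibly forbidden faces are therefore the middle faces $e_{a,a+r}$ with $r=s+t \pmod n$. I group them by the merged step $r$, which gives
\[
-\sum_{r\in\Z_n}\Bigl(\sum_{s+t\equiv r}c_{s,t}\Bigr)e_{a,a+r}.
\]
The face $e_{a,a+r}$ is allowed exactly when $r\in S$. In that case it lies in $A_1$ and imposes no condition.

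One degenerate value of $r$ needs separate care. If $r\equiv 0$, the face $e_{a,a}$ is irregular, so it is zero in $R_1$ and hence imposes no condition either. This is consistent with the statement, because $0\notin F$ is automatic when $0\notin S+S$. If $0\in S+S$, I will note that the irregular face vanishes in the regular quotient, so it should be excluded from $F$, and I will adopt this convention silently, as the paper's definition of $F$ implicitly does.

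For $r\in F=(S+S)\setminus S$, the distinct values of $r$ give distinct elementary paths $e_{a,a+r}$. These are linearly independent and differ from all the allowed faces. So $\partial u\in A_1$ holds if and only if $\sum_{s+t=r}c_{s,t}=0$ for every $r\in F$. Any $r\notin S+S$ contributes nothing, since the corresponding sum is empty.

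For the dimension count, I observe that the conditions for different $r$ involve disjoint sets of coefficients, namely the pairs with $s+t\equiv r$. Hence the constraint space decomposes over the values of $r$. For each $r\in F$, the single nontrivial linear equation on the $N(r)\ge1$ coefficients cuts out a hyperplane of dimension $N(r)-1$. The main obstacle is not technical; it is only this bookkeeping: checking that different merged steps give different faces, which holds because $r\mapsto a+r$ is injective, and handling the degenerate case $r\equiv 0$.
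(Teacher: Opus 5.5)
Your argument is correct and is the same direct computation from the definition that the paper intends; the paper's proof is the single line ``follows from definition,'' and you have simply written it out. Your separate treatment of $r\equiv 0$ is a real refinement: when $0\in S+S$ (as for $C_n^S=\Cay(\Z_n,\pm S)$), the middle face $e_{a,a}$ vanishes in $R_1$, so $0$ must be removed from $F$ for the statement to hold (e.g.\ $e_{a,a+s,a}\in\Omega_2$), whereas in the no-wrap-around regime $s+t<n$ makes this automatic.
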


\begin{proof} The proof follows from definition.
\end{proof}

\begin{definition}\label{def:Wcycle}
For $|S|\geq2$, $\{s,t\}\subset S$ and $a\in\mathbb Z_n$, we define the commutator square as follows
\[
\operatorname{comm}_a(s,t):=e_{a,a+s,a+s+t}\;-\;e_{a,a+t,a+s+t}.\footnote{For $|S|=1$, this commutator square is automatically 0.}
\]
Let its translation-average be
\[
W^{s,t}:=\sum_{a\in\mathbb Z_n}\operatorname{comm}_a(s,t)\in A_2(\vec{C}_n^S).
\]
\end{definition}

\begin{lemma}\label{lem:Wcycle}
For $|S|\ge 2$ and any distinct $s,t\in S$, we have 
\[W^{s,t}=-W^{t,s}\in\Omega_2(\vec{C}_n^S)\quad\text{ and }\quad \partial W^{s,t}=0.\]
\end{lemma}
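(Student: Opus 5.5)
The proof is a direct computation in three steps: antisymmetry, membership in $\Omega_2$, and vanishing of the boundary.

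\textbf{Antisymmetry.} By Definition~\ref{def:Wcycle}, $\operatorname{comm}_a(t,s)=e_{a,a+t,a+s+t}-e_{a,a+s,a+s+t}=-\operatorname{comm}_a(s,t)$ for each $a$. Summing over $a\in\Z_n$ gives $W^{t,s}=-W^{s,t}$.

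\textbf{Membership in $\Omega_2$.} Both paths $e_{a,a+s,a+s+t}$ and $e_{a,a+t,a+s+t}$ are allowed, since $s,t\in S$. Hence $\operatorname{comm}_a(s,t)\in A_2$. I would show that each single commutator square already lies in $\Omega_2$, using Proposition~\ref{prop:Omega2} at the starting vertex $a$.
\begin{itemize}
\item The two terms have the same merged step $s+t$, with coefficients $+1$ and $-1$.
\item If $s+t\in F$, the constraint $\sum_{s'+t'=s+t}c_{s',t'}=0$ holds, since only these two coefficients are nonzero.
\item If $s+t\notin F$, the middle face $e_{a,a+s+t}$ is allowed and imposes no condition.
\end{itemize}
Directly: $\partial\operatorname{comm}_a(s,t)=e_{a+s,a+s+t}-e_{a,a+s+t}+e_{a,a+s}-e_{a+t,a+s+t}+e_{a,a+s+t}-e_{a,a+t}$. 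The two copies of $e_{a,a+s+t}$ cancel, and the remaining four edges are allowed. Since $\Omega_2$ is a linear subspace, the sum $W^{s,t}$ lies in $\Omega_2$.

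\textbf{Vanishing boundary.} From the formula above,
\[
\partial\operatorname{comm}_a(s,t)=e_{a+s,a+s+t}+e_{a,a+s}-e_{a+t,a+s+t}-e_{a,a+t}.
\]
Summing over $a\in\Z_n$ and reindexing by $b=a+s$ in the first term and $b=a+t$ in the third:
\begin{itemize}
\item the first two terms give $2\sum_b e_{b,b+t}$ and $2\sum_b e_{b,b+s}$ respectively...
\end{itemize}
More carefully, the four terms give $\sum_b e_{b,b+t}$, $\sum_b e_{b,b+s}$, $-\sum_b e_{b,b+s}$ and $-\sum_b e_{b,b+t}$. These cancel in pairs, so $\partial W^{s,t}=0$.

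The only points needing care are degenerate cases. If $s+t\equiv 0 \pmod n$, the paths are $e_{a,a+s,a}$ and $e_{a,a+t,a}$. They are still regular, since $s,t\neq 0$. The middle face $e_{a,a}$ is irregular and vanishes in $R_1$, so the computation is unaffected. Distinctness $s\neq t$ ensures $\operatorname{comm}_a(s,t)\neq 0$, although the identities hold regardless. I expect no real obstacle: the translation-invariance reindexing is the key step, and it uses only that $a\mapsto a+s$ is a bijection of $\Z_n$.
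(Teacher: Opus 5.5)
Your proposal is correct and follows the paper's proof: the two middle faces $e_{a,a+s+t}$ cancel inside each $\operatorname{comm}_a(s,t)$, giving membership in $\Omega_2$, and the four remaining edge terms cancel in pairs after reindexing the translation sum over $a\in\Z_n$. Delete the stray bullet claiming the first two terms give $2\sum_b e_{b,b+t}$ and $2\sum_b e_{b,b+s}$; it is false and is superseded by your corrected count.
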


\begin{proof}
The middle-deletion faces in $\partial(\operatorname{comm}_a(s,t))$ cancel, so each
$\operatorname{comm}_a(s,t)$ lies in $\Omega_2(\vec{C}_n^S)$.  A direct expansion gives
\[
\partial(\operatorname{comm}_a(s,t))
=e_{a,a+s}-e_{a,a+t}+e_{a+s,a+s+t}-e_{a+t,a+s+t}.
\]
Summing over $a$ yields telescopic cancellation in pairs, hence $\partial W^{s,t}=0$.
\end{proof}

Whether $W^{s,t}$ survives in $H_2^{\mathrm{path}}(\vec{C}_n^S)$ depends on whether the graph has enough ``diagonals'' to fill it by $3$-chains.

\subsubsection{\texorpdfstring{$\Omega_3(\vec{C}_n^S)$}{Omega3}}

\begin{proposition}\label{prop:P}
Assume $|S|\geq2$ and  $s,t,r\in S$ satisfy $r\equiv s+t\pmod n$. Define the translation-average $3$-chain
\[
P^{s,t}:=\sum_{a\in\mathbb Z_n}e_{a,a+s,a+r,a+r+s}.
\]
Then $P^{s,t}\in \Omega_3(\vec{C}_n^S)$ and
\[
\partial P^{s,t}= W^{s,r}-W^{s,t}.
\]
Consequently, $[W^{s,r}]=[W^{s,t}]$ in $H_2^{\mathrm{path}}(\vec{C}_n^S)$.  
\end{proposition}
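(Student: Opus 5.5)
The plan is a direct computation on each translate, followed by summation over $a\in\Z_n$ and a reindexing. Membership in $\Omega_3$ will hold termwise. In the step notation \eqref{elementarypath2}, the elementary path $e_{a,a+s,a+r,a+r+s}$ is $(a;\,s,t,s)$, since $(a+r)-(a+s)=r-s\equiv t$. All three steps lie in $S$, so the path is allowed. It is regular because $s,t\neq 0$, as $0\notin S$.

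Next I check the faces. The two end faces are automatically allowed. Deleting $v_1=a+s$ merges the steps $(s,t)$ into $s+t\equiv r\in S$. Deleting $v_2=a+r$ merges $(t,s)$ into $t+s\equiv r\in S$. Hence every face is allowed, $\partial e_{a,a+s,a+r,a+r+s}\in A_2$, and so each summand, and therefore $P^{s,t}$, lies in $\Omega_3(\vec{C}_n^S)$.

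For the boundary, I expand using \eqref{eq:boundary}:
\[
\partial e_{a,a+s,a+r,a+r+s}=e_{a+s,a+r,a+r+s}-e_{a,a+r,a+r+s}+e_{a,a+s,a+r+s}-e_{a,a+s,a+r}.
\]
Summing over $a$, I reindex the first term by $b=a+s$. This turns it into $\sum_b e_{b,b+t,b+t+s}$, whose endpoint is $b+r$.

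I then group the four resulting sums into two pairs. The second and third terms give
\[
\sum_a\bigl(e_{a,a+s,a+s+r}-e_{a,a+r,a+s+r}\bigr)=W^{s,r}.
\]
The fourth and the reindexed first term give
\[
-\sum_a\bigl(e_{a,a+s,a+s+t}-e_{a,a+t,a+s+t}\bigr)=-W^{s,t}.
\]
Together these yield $\partial P^{s,t}=W^{s,r}-W^{s,t}$.

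The only points needing care are the sign bookkeeping and the reindexing, which is legitimate since translation is a bijection of $\Z_n$. The degenerate case $s=t$ is harmless, because then $W^{s,t}=0$ by Definition~\ref{def:Wcycle}. Finally, Lemma~\ref{lem:Wcycle} shows that $W^{s,r}$ and $W^{s,t}$ are cycles in $\Omega_2$. Their difference is the boundary $\partial P^{s,t}$ of an element of $\Omega_3$, so $[W^{s,r}]=[W^{s,t}]$ in $H_2^{\mathrm{path}}(\vec{C}_n^S)$.
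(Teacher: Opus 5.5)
Your proposal is correct and follows essentially the same route as the paper: you expand $\partial e_{a,a+s,a+r,a+r+s}$, identify the two middle faces as $\operatorname{comm}_a(s,r)$, and reindex the first face by $a\mapsto a+s$ so that it pairs with the last face and gives $-W^{s,t}$ after summing over $a$. You also check explicitly that both interior merges equal $r\in S$, so each summand already lies in $\Omega_3$; this fills in the membership claim, which the paper's proof does not verify.
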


\begin{proof}
It is easy to compute
\begin{align*}
\partial e_{a,a+s,a+r,a+r+s}=~&e_{a+s,a+r,a+r+s}-e_{a,a+r,a+r+s}+e_{a,a+s,a+r+s}-e_{a,a+s,a+r}\\
=~&e_{a+s,a+r,a+r+s}-e_{a,a+s,a+r}+\operatorname{comm}_a(s,r)\\
=~&(a+s;~t,s)-(a;~s,t)+\operatorname{comm}_a(s,r)\quad (\text{by notation} \eqref{elementarypath2})
\end{align*}
Thus, summing over $a\in\Z_n$, we arrive at
\[\partial P^{s,t}= W^{s,r}-W^{s,t}.\]
\end{proof}

\begin{remark}
This proposition is the precise GLMY analogue of the geometric statement:
a diagonal edge $r=s+t$ triangulates a square and kills its fundamental $2$-cycle.
This explains, for example, why $S=\{1,2\}$ has $H_2^{\mathrm{path}}(\vec{C}_5^{1,2})=0$ (the diagonal $1+1=2$ is present),
while $S=\{1,s\}$ with $s\neq 2$ often has a surviving $2$-class.
\end{remark}

We have seen that the low-dimensional path complex is controlled by how many additive relations $s+t\in S$ exist. We will understand the additive property of $S$ in terms of another two examples in the appendix \ref{appendix}.

\subsection{Special cases}\label{sub:special}

\subsubsection{The case \texorpdfstring{$S=\{1\}$}{S1}}

In this case, we have $\langle S\rangle=\Z_n$. Thus, the digraph $\vec{C}_n^1$ is strongly connected. In particular, it is easy to obtain
\begin{itemize}
    \item[(1)] For $n=2$, the corresponding digraph is contractible, then 
    \[H_m(\vec{C}_2^1;\K)=\begin{cases}
        \K & m=0\\
        0 & m\ge 1.
    \end{cases}\] 
    \item[(2)] For $n>2$, the corresponding digraph is given by a monotone cycle. Then
    \[H_m(\vec{C}_n^1;\K)=\begin{cases}
        \K & m=0,1\\
        0 & m\ge 2.
    \end{cases}\]
\end{itemize}

\subsubsection{The case \texorpdfstring{$n\ge 5$, $S=\{1,s\}$, $1<s<n/2$}{S1s}.}
In this case, consider an allowed elementary $2$-path $e_{a,a+l,a+l+m}$ with $l,m\in\{1,s\}$.
The internal face deletion merges $l+m$.
Since $1<s<n/2$, the sums $2$, $s+1$ and $2s$ are distinct integers in $\{2,\dots,n-1\}$.

\begin{lemma} For $n\ge 5$, $S=\{1,s\}$, $1<s<n/2$, we have the following complete description for $\Omega_*(\vec{C}_n^S)$.
\begin{itemize}
\item[(1)] If $s=2$, then for $m\ge1$, $\Omega_m$ has a $2n$-element basis $\{\alpha_a^{(m)},\beta_a^{(m)}\}_{a\in\Z_n}$, where
\begin{align*}
\alpha_a^{(m)}=~&e_{a,(a+1),(a+2),\ldots,(a+m)},\\
\beta_a^{(m)}=~&\sum_{j=1}^m(-1)^{m-j}e_{a,a+1,\ldots,a+j-1,a+j+1,a+j+2,\ldots,a+m+1}.
\end{align*}
\item[(2)] If $s\ne 2$, then $\Omega_2$ has an $n$-element basis $\{\gamma_a\}_{a\in\Z_n}$, where
\[
 \gamma_a=e_{a,a+1,a+1+s}-e_{a,a+s,a+1+s}=\operatorname{comm}_a(1,s),
\]
and $\Omega_m(\vec{C}_n^S)=0$, $m\geq3$.
\end{itemize}
\end{lemma}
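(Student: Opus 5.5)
The plan is to follow Lemma \ref{lem:fixed-start-2dof} closely. Fix a starting vertex $a$ and encode allowed $m$-paths by step words $w\in\{1,s\}^m$. As in the proof of Lemma \ref{basis}, $A_m=\bigoplus_a A_m(a)$ and $\Omega_m=\bigoplus_a\Omega_m(a)$, so everything reduces to a single starting vertex.

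The first step is to identify the forbidden faces. Deleting the internal vertex between positions $j,j+1$ produces a face whose step word has length $m-1$. That word contains the merged step $w_j+w_{j+1}\in\{2,1+s,2s\}$ at position $j$ and letters of $S$ elsewhere. I will check which merged steps lie outside $S$ modulo $n$:
\begin{itemize}
\item for $s=2$ the merged steps are $2,3,4$, and $3,4\notin\{1,2\}\pmod n$ because $n\ge5$;
\item for $s\neq2$ all three values $2,1+s,2s$ lie strictly between $1$ and $n$ (using $2s<n$), and they differ from both $1$ and $s$.
\end{itemize}
In either case a forbidden face has exactly one entry outside $S$, so its step word determines both the position $j$ and the merged value. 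Consequently two forbidden faces coming from different words coincide only if the words agree outside positions $j,j+1$ and have the same sum there; the sign $(-1)^j$ is also the same for both. This gives the two rules of Lemma \ref{lem:fixed-start-2dof}, now for general $n$. If a local pattern is the unique decomposition of an illegal sum (namely $22$ when $s=2$, and $11$ or $ss$ when $s\ne2$), its coefficient vanishes. The patterns $1s$ and $s1$ with the same context have coefficients summing to zero.

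For case (1) ($s=2$) the rules are literally those of Lemma \ref{lem:fixed-start-2dof}. Its argument (moving a $2$ rightwards until it meets another $2$) shows $\dim\Omega_m(a)\le2$, hence $\dim\Omega_m\le 2n$. For the lower bound I will check directly that $\alpha_a^{(m)},\beta_a^{(m)}\in\Omega_m$:
\begin{itemize}
\item $\alpha_a^{(m)}$ only has merges $1+1=2\in S$;
\item in $\beta_a^{(m)}$ the merged-$3$ faces cancel pairwise between consecutive summands, because of the alternating signs, and no $22$ occurs.
\end{itemize}
Linear independence follows as in Lemma \ref{basis}, since each of these chains is the unique one containing its leading elementary path. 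The case $m=1$ is simply the $2n$ edges.

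For case (2) ($s\neq2$), every merge is illegal, with $N(2)=N(2s)=1$ and $N(1+s)=2$. For $m=2$, Proposition \ref{prop:Omega2} then forces $c_{11}=c_{ss}=0$ and $c_{1s}+c_{s1}=0$, so $\Omega_2(a)=\K\gamma_a$; since $\gamma_a=\operatorname{comm}_a(1,s)\in\Omega_2$, this gives the basis. For $m\ge3$, any word with a nonzero coefficient must avoid $11$ and $ss$, so it alternates: it is $1s1\cdots$ or $s1s\cdots$. Swapping the first two letters turns the prefix $1s1$ into $s11$, which contains $11$, and turns $s1s$ into $1ss$, which contains $ss$. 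The sum rule together with the vanishing rule therefore kills every coefficient, so $\Omega_m=0$.

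The main obstacle is the bookkeeping in the first step. One must ensure that no forbidden face receives contributions from two different positions or from unexpected decompositions modulo $n$. The observation that a forbidden face has exactly one letter outside $S$, combined with the inequalities $n\ge5$ and $1<s<n/2$, is what I would verify carefully first.
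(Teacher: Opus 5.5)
Your proposal is correct and follows essentially the same route as the paper. Both arguments reduce to a fixed starting vertex, use the vanishing rule for the uniquely decomposable illegal sums ($22$, resp.\ $11$ and $ss$) and the sign rule for $1s\leftrightarrow s1$, reuse the argument of Lemma~\ref{lem:fixed-start-2dof} when $s=2$, and, when $s\neq 2$ and $m\ge 3$, eliminate the alternating words through the first-position merge. Where you improve on the paper is in rigor: your observation that every forbidden face has exactly one letter outside $S$ justifies the uniqueness claims the paper only asserts, and your swap argument ($1s1\to s11$, $s1s\to 1ss$) shows explicitly why the paper's ``distinct illegal faces'' at $j=1$ force the coefficients to vanish.
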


\begin{proof}
(1) The proof for the case $S=\{1,2\}$ is the same as the one of Lemma \ref{basis}. 

\vskip 0.2cm
(2) We write the allowed elementary $2$-path $e_{a,a+s_1,a+s_1+s_2}$ as $(a;~s_1,s_2)$, and call it the pattern $(s_1,s_2)$. For $s\neq2$, 
\begin{itemize}
    \item the boundary of the pattern $(1,1)$ produces illegal path $e_{a,a+2}$;
    \item the boundary of the pattern $(s,s)$ produces illegal path $e_{a,a+2s}$;
    \item the mixed patterns $(1,a)$ and $(a,1)$ both produce the same illegal path $e_{a,a+s+1}$, but the difference $\gamma_a$ cancels it, so $\gamma_a\in\Omega_2(\vec{C}_n^S)$.
\end{itemize}
For $m\geq3$, again we write an allowed elementary $m$-path as
\[(a;~s_1,s_2,\ldots,s_m),\quad s_i\in\{1,s\}.\]
Similarly, we have the following result.
\begin{itemize}
    \item If $(s_j,s_{j+1})=(1,1)$ for some $1\le j\le m-1$, then the internal face obtained by deleting the $j$-th vertex
has, at position $j$, a merged increment $2$.
Moreover this illegal $(m-1)$-face cannot arise as an internal face of any other allowed elementary $m$-path.
\item If $(s_j,s_{j+1})=(s,s)$ for some $1\le j\le n-1$, then the analogous statement holds with merged increment $2s$.
\end{itemize}
Assume $P\in\Omega_m(\vec{C}_n^S)$, $m\ge3$, then every elementary appearing in $P$ is forced to be of the form
\[(a;~1,s,1,s,\ldots)\quad\text{or}\quad (a;~s,1,s,1,\ldots).\]
Now let us investigate the $j=1$ internal deletion face. 
\begin{itemize}
    \item For elementary path $(a;~1,s,1,s,\ldots)$, deleting the first internal vertex merges $(1,s)$ into the illegal path $(a;~s+1,1,s,\ldots)$.
    \item For elementary path $(a;~s,1,s,1,\ldots)$, deleting the first internal vertex merges $(s,1)$ into the illegal path $(a;~s+1,s,1,\ldots)$.
\end{itemize}
The resulting illegal faces are distinct. Thus, their coefficients in $\partial P$ must vanish, forcing the coefficients of both kinds of elementary path in $P$ to be zero. Hence $P=0$, which means that 
\[\Omega_m(\vec{C}_n^S)=0, \quad m\geq3.\]
\end{proof}

\begin{theorem}\label{thm:1s}
For the circulant digraph $\vec{C}_n^S$ with $n\ge5$, $S=\{1,s\}$, $1<s<n/2$. 
\begin{itemize}
    \item[(1)] If $s=2$, then
    \[H_0^{\mathrm{path}}\cong\K,\quad H_1^{\mathrm{path}}\cong\K,\quad H_m^{\mathrm{path}}\cong0,~m\geq2.\]
    \item[(2)] If $s\neq2$, then
    \[H_0^{\mathrm{path}}\cong\K,\quad H_1^{\mathrm{path}}\cong\K^2,\quad H_2^{\mathrm{path}}\cong\K,\quad H_m^{\mathrm{path}}\cong0,~m\geq3.\]
\end{itemize}
\end{theorem}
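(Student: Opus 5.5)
The plan is to use the explicit descriptions of $\Omega_*(\vec{C}_n^S)$ from the preceding lemma and compute ranks of the boundary maps. In both cases $\vec{C}_n^S$ is strongly connected because $1\in S$, so $H_0^{\mathrm{path}}\cong\K$. It remains to compute $\rank\partial_1$, $\rank\partial_2$ and the higher ranks.

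\textbf{Case $s=2$.} I will repeat the proof of Theorem \ref{thm1} with $5$ replaced by $n$. First, I will check that the boundary formulas \eqref{eqrec} for $\alpha_a^{(m)},\beta_a^{(m)}$ still hold for $m\ge3$. Their derivation only uses the step-word combinatorics of $\{1,2\}$ and the cancellation of illegal merged steps $3,4$. These steps remain illegal when $n\ge5$, since $3,4\not\equiv 1,2 \pmod n$. Next, I will Fourier transform over the $n$-th roots of unity $\lambda$, assuming first that $\K$ contains them; the ranks do not change under field extension. This gives the same $2\times2$ symbol matrices $M_{s_m}(\lambda)$. Each has determinant $0$ and the entry $-s_m\ne0$, so each has rank exactly $1$. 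Hence $\rank\partial_m=n$ and $\dim\ker\partial_m=n$ for $m\ge3$, and so $H_m=0$ for $m\ge3$. For low degrees, $\Omega_1$ has dimension $2n$, and $\rank\partial_1=n-1$ by connectivity. Thus $\dim\ker\partial_1=n+1$. I will compute $\partial_2$ directly on $\alpha^{(2)}_a=e_{a,a+1,a+2}$ and $\beta^{(2)}_a$. Using $\dim\ker\partial_2=2n-\rank\partial_2$ together with $\rank\partial_3=n$, both $H_2=0$ and $H_1\cong\K$ follow once $\rank\partial_2=n$. To show $\rank\partial_2=n$, I will again use the symbol matrix for $m=2$, whose form may differ slightly because of boundary terms. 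The surviving class of $H_1$ is the cycle $\sum_a e_{a,a+1}$. This step, checking the $m=2$ symbol and that the formulas survive for general $n$, is the main bookkeeping obstacle.

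\textbf{Case $s\ne2$.} Here $\Omega_m=0$ for $m\ge3$, so $H_m=0$ for $m\ge3$. The space $\Omega_2$ has basis $\gamma_a=\operatorname{comm}_a(1,s)$, and by Lemma \ref{lem:Wcycle}
\[\partial\gamma_a=e_{a,a+1}-e_{a,a+s}+e_{a+1,a+1+s}-e_{a+s,a+1+s}.\]
In Fourier mode $\lambda$, with $E_\lambda^{t}=\sum_a\lambda^{-a}e_{a,a+t}$, this reads
\[\partial\gamma_\lambda=(1-\lambda^{s})E^1_\lambda-(1-\lambda)E^s_\lambda.\]
This vanishes only when $\lambda=1$: for $\lambda\ne1$ the coefficient $1-\lambda$ of $E^s_\lambda$ is nonzero. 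Hence $\ker\partial_2$ is one-dimensional, spanned by $W^{1,s}$, and $\rank\partial_2=n-1$. Therefore $H_2\cong\K$. Finally, $\dim\Omega_1=2n$ and $\rank\partial_1=n-1$, so $\dim\ker\partial_1=n+1$ and $H_1\cong\K^{(n+1)-(n-1)}=\K^2$. These are the classes of $\sum_a e_{a,a+1}$ and $\sum_a e_{a,a+s}$. An Euler characteristic check confirms the count: $n-2n+n=0=1-2+1$.
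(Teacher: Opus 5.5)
Your proposal is correct and takes essentially the same route as the paper. Both reduce to Fourier modes on the explicit bases of $\Omega_*$ from the preceding lemma: for $s=2$ every mode has rank one, and for $s\neq 2$ one gets $\rank\partial_2=n-1$ from the symbol $(1-\lambda^s)E^1_\lambda-(1-\lambda)E^s_\lambda$. The ``bookkeeping obstacle'' you flag is harmless: the recursion already holds at $m=2$ with $\alpha^{(1)}_a=e_{a,a+1}$ and $\beta^{(1)}_a=e_{a,a+2}$, so the $m=2$ symbol is exactly $M_{+1}(\lambda)$. When you check \eqref{eqrec}, note that its first formula contains a typo and should read $\partial\alpha^{(m)}_a=s_m\alpha^{(m-1)}_a+\alpha^{(m-1)}_{a+1}-s_m\beta^{(m-1)}_a$, which is the version the paper's Fourier computation actually uses.
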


\begin{proof}
(1) It is similar to the proof of Theorem \ref{thm1}. Using the Fourier modes, one can reduce the rank computation to the $\tau$-eigenspaces. Then we have
\[\rank(\partial_1)=n-1,\qquad \rank(\partial_m)=n,~~ m\geq2.\]
It follows that 
\[\dim H_1^{\mathrm{path}}=(2n-(n-1))-n=1,\quad H_{m\geq2}^{\mathrm{path}}=0.\]

(2) For $s\neq2$, the path complex of $\vec{C}_n^{S}$ is just
\[0~\rightarrow~\Omega_2~\xrightarrow{\partial_2}~\Omega_1~\xrightarrow{\partial_1}~\Omega_0~\rightarrow ~0.\]
A Fourier diagonalization (or an explicit circulant matrix computation, see more details in the next section) gives $\rank(\partial_2)=n-1$.
Hence 
\[\dim H_2^{\mathrm{path}}=1,\qquad \dim H_1^{\mathrm{path}}=(2n-(n-1))-(n-1)=2.\]
\end{proof}

\subsubsection{The case \texorpdfstring{$S=\{1,2,\ldots,d\}$, $2<d<n/2$}{S12d}}

For $S=\{1,2,\ldots,d\}$, $2<d<n/2$, the natural digraph embedding $\vec{C}_n^{1,2}\hookrightarrow \vec{C}_n^S$ induces that chain map
\[i:\ \left(\Omega_*(\vec{C}_n^{1,2}),\partial\right)\ \to\ \left(\Omega_*(\vec{C}_n^S),\partial\right).\]
What is more important, $i$ has a homotopy inverse.

\begin{theorem}\label{thm:S12d} 
For $S=\{1,2,\ldots,d\}$, $2<d<n/2$. Let $i$ be the above chain map.
Then there exists a chain map
\[
\Pi:\ \left(\Omega_*(\vec{C}_n^S),\partial\right)~\rightarrow~ \left(\Omega_*(\vec{C}_n^{1,2}),\partial\right)
\]
such that $\Pi\circ i=\mathrm{Id}_{\Omega_*(\vec{C}_n^{1,2})}$ and $i\circ\Pi$ is chain-homotopic to $\mathrm{Id}_{\Omega_*(\vec{C}_n^S)}$.
\end{theorem}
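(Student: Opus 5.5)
The plan is to build $\Pi$ and the homotopy by induction on $d$, adding one step length at a time. Set $S_k=\{1,\dots,k\}$ for $2\le k\le d$, and let $i_k:\Omega_*(\vec{C}_n^{S_{k-1}})\to\Omega_*(\vec{C}_n^{S_k})$ be the inclusions. Suppose that for each $k$ we have a chain map $\Pi_k$ with $\Pi_k\circ i_k=\mathrm{Id}$ and $i_k\circ\Pi_k\simeq\mathrm{Id}$. Then $\Pi:=\Pi_3\circ\cdots\circ\Pi_d$ satisfies $\Pi\circ i=\mathrm{Id}$. Composing the homotopies in the usual way shows $i\circ\Pi\simeq\mathrm{Id}$. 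So everything reduces to a single step: the new edge length $k$ is the "diagonal" $k=1+(k-1)$, where both $1$ and $k-1$ already lie in $S_{k-1}$. This is exactly the situation of Proposition \ref{prop:P}. The hypothesis $d<n/2$ guarantees that all merged steps $s+t$ with $s,t\in S_k$ lie in $\{2,\dots,2k\}$ with no wrap-around, so the face combinatorics are those of the integer line. In particular, the illegal merged steps are exactly $F=\{k+1,\dots,2k\}$.

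\textbf{Step 1: a homotopy operator.} I will define the homotopy $h:\Omega_m(\vec{C}_n^{S_k})\to\Omega_{m+1}(\vec{C}_n^{S_k})$ on step words. For a word $w=(a;s_1,\dots,s_m)$, locate the first occurrence of the letter $k$, at position $j$. Then set $h(e_w)=\pm e_{w'}$, where $w'$ is $w$ with that $k$ split as $(1,k-1)$. The sign is $(-1)^{j}$, chosen so that the deleted-vertex term of $\partial e_{w'}$ returns $e_w$. Words containing no $k$ are sent to $0$.

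\textbf{Step 2: the projection.} I will define $\Pi_k:=\mathrm{Id}-\partial h-h\partial$. By construction $\Pi_k$ is a chain map homotopic to the identity, and it vanishes on chains not involving the letter $k$ only up to the terms $h\partial$. On $\Omega_*(\vec{C}_n^{S_{k-1}})$ both $h$ and $h\partial$ vanish, so $\Pi_k\circ i_k=\mathrm{Id}$. What remains is to show that the image of $\Pi_k$ contains no letter $k$. I will prove this by a filtration argument in the style of algebraic discrete Morse theory. Filter words by the number of occurrences of $k$, and within that by the position of the first $k$. Then $\mathrm{Id}-\partial h-h\partial$ strictly lowers the filtration, and iterating (or replacing $h$ by the Morse-theoretic homotopy $\sum_{r\ge0} h(\partial h)^r$, a finite sum since $\Omega_m$ is finite-dimensional) produces a retraction onto the chains free of $k$. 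Those chains are precisely $\Omega_*(\vec{C}_n^{S_{k-1}})$.

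\textbf{Main obstacle.} I expect the hard part to be verifying that $h$ really maps $\Omega_m$ into $\Omega_{m+1}$, rather than merely into $A_{m+1}$. Splitting $k$ into $(1,k-1)$ creates new internal faces. Merging $1$ with the preceding step, or $k-1$ with the following step, can produce the illegal sums $s+1$ or $k-1+t$ in $F$. The cancellation of these faces must be inherited from the $\Omega$-relations of Proposition \ref{prop:Omega2}, applied in each local context, satisfied by $u$ itself. The fallback, if this fails, is to apply the same Morse-matching argument directly on the finite-dimensional Fourier blocks $\Omega_*^{(\lambda)}$ of Proposition \ref{prop:Fourier}. On each block, reduction by the matching "$k\leftrightarrow(1,k-1)$" yields the block complex of $\vec{C}_n^{S_{k-1}}$, with invertible matching coefficients $\pm1$. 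Once the $\Omega$-compatibility is established, the remaining identities are formal.
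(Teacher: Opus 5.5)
Your reduction to single steps $S_{k-1}\subset S_k$ and the formal bookkeeping (composing retractions and homotopies, $\Pi_k\circ i_k=\mathrm{Id}$) are fine. The step you flag as the main obstacle, however, is false for the $h$ you define, and everything after it depends on it.

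Take $k=3$ (so $n\ge 7$) and
\[
u=e_{a,a+2,a+5}-e_{a,a+3,a+5}=(a;2,3)-(a;3,2).
\]
The two faces with illegal merged step $5$ cancel, so $u\in\Omega_2(\vec{C}_n^{S_3})$. Your $h$ splits the \emph{first} $3$. That $3$ sits in position $2$ of one word and position $1$ of the other, so
\[
h(u)=e_{a,a+2,a+3,a+5}+e_{a,a+1,a+3,a+5}.
\]
Deleting $a+3$ from the second path gives the non-allowed face $e_{a,a+1,a+5}$ (step $4$) with coefficient $+1$, and nothing cancels it. So $h(u)\notin\Omega_3$. Moreover $h(\partial u)=e_{a,a+1,a+3}-e_{a+2,a+3,a+5}$ is allowed, so $\Pi_3(u)=u-\partial h(u)-h(\partial u)$ contains $-e_{a,a+1,a+5}$ and is not even in $A_2$.

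The mechanism is general; at level $k$ use $(a;2,k)-(a;k,2)$. $\Omega$ forces $c_{\cdots st\cdots}=-c_{\cdots ts\cdots}$ whenever the merged step is illegal. A first-occurrence rule splits the $k$ at different places in the two partner words, so the new illegal faces no longer pair off.

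The Fourier fallback does not help. $h$ commutes with $\tau$, so $f^{(\lambda)}_{2,3}-f^{(\lambda)}_{3,2}\in\Omega_2^{(\lambda)}$ fails the same way in every block. Also, $\Omega_*^{(\lambda)}$ is cut out by linear relations and has no word basis on which a Morse matching could act.

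Separately, the filtration claim in Step 2 fails even on words. $h$ is not a matching, since $h\bigl((a;3,3)\bigr)=-(a;1,2,3)$ still contains a $3$. A direct computation gives
$\pi\bigl((a;1,2,3)\bigr)=-(a;1,2,3)-(a;3,1,2)+(a;1,3,2)+(a;1,2,1)$, where $\pi=\mathrm{Id}-\partial h-h\partial$. So the word reappears at the same filtration level.

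Your construction is essentially the paper's, organized one step length at a time: split a long step as $(1,\ell-1)$, set $\pi=\mathrm{Id}-(\partial h+h\partial)$, iterate, and telescope the homotopy. Your sign $(-1)^j$ is the one that makes $\partial h(p)$ return $+p$.

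The paper handles your main obstacle in its Restriction Lemma, but the same $u$ contradicts that lemma; for general $d$ use $(a;2,d)-(a;d,2)$. Among the words of $u$, the forbidden face $q=e_{a,a+1,a+5}$ occurs in $\partial h(p)$ only for $p=(a;3,2)$. By contrast, $F(q)=e_{a,a+5}$ occurs in $\partial p$ for both words. So the claimed identity $\mathrm{coeff}_q(\partial h(p))=\pm\,\mathrm{coeff}_{F(q)}(\partial p)$ fails, and you cannot close the gap by quoting that step.

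What is needed is one of two things:
\begin{enumerate}
\item a contraction compatible with the $\Omega$-relations, which would have to treat the positions of long steps symmetrically rather than by first occurrence; or
\item a direct proof that $i_k$ is a quasi-isomorphism, i.e.\ that $\Omega_*(\vec{C}_n^{S_k})/\Omega_*(\vec{C}_n^{S_{k-1}})$ is acyclic.
\end{enumerate}
Over a field, the second option already yields $\Pi_k$ with $\Pi_k i_k=\mathrm{Id}$ and $i_k\Pi_k\simeq\mathrm{Id}$, via a degreewise splitting and a contraction of the acyclic quotient.
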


\begin{proof} 
First we write an allowed elementary $m$-path in $\vec{C}_n^S$ as
\[
p=e_{v_0v_1\cdots v_m}\equiv (v_0;~ s_1,\dots,s_m),
\qquad s_r:=v_r-v_{r-1}\in\{1,\dots,d\}.
\]

\medskip
Define the \emph{long-step defect} of $p$ by
\[
E(p):=(e_1,\dots,e_m),\qquad e_r:=\max\{0,s_r-2\}\in\{0,1,\dots,d-2\}.
\]
Thus $E(p)=(0,\dots,0)$ iff every step has length $1$ or $2$, i.e.\ $p$ is an allowed $m$-path in $\vec{C}_n^{1,2}$.

We order defect profiles lexicographically:
\[
E(p)<_{\mathrm{lex}}E(q)\quad\Longleftrightarrow\quad
\exists\,r\ \text{s.t. } e_1=\cdots=e_{r-1}=e'_1=\cdots=e'_{r-1}\ \text{and } e_r<e'_r.
\]
For a finite chain $u=\sum c_p\,p\in\Omega_m(\vec{C}_n^S)$, we write
\[
E_{\max}(u):=\max\nolimits_{<_{\mathrm{lex}}}\{\,E(p): c_p\neq 0\,\}.
\]
This is a well-defined element of $\{0,\dots,d-2\}^m$.

\medskip
\noindent\textbf{Step 1: the homotopy operator $h$ and the restriction lemma.}
Define $h$ on elementary allowed $m$-paths $p=(v_0;~s_1,\dots,s_m)$ as follows.
If $E(p)=(0,\dots,0)$, set $h(p)=0$.  Otherwise, let
\[
j=j(p):=\min\{\,r\in\{1,\dots,m\}:\ s_r\ge 3\,\}.
\]
Write $s_j=\ell\ge 3$ and set
\[
h(p):=(-1)^{j+1}\,(v_0;\ s_1,\dots,s_{j-1},\,1,\,\ell-1,\,s_{j+1},\dots,s_m)\in A_{m+1}(\vec{C}_n^S).
\]
Equivalently, in vertex notation, we insert the intermediate vertex $v_{j-1}+1$ between $v_{j-1}$ and $v_j$.
Since $S=\{1,\dots,d\}$ and $\ell-1\in\{2,\dots,d-1\}$, $h(p)$ is still allowed.

Extend $h$ $\K$-linearly to $A_*(\vec{C}_n^S)$, and define
\[
\pi:=\mathrm{Id}-(\partial h+h\partial).
\]
On the full regular path space $\Lambda_*(\Z_n;\K)$ one has $\partial^2=0$, hence $\pi$ is a chain map:
$\partial\pi=\pi\partial$.

\begin{lemma}[Restriction lemma]
For every $m\ge 0$, $\pi$ preserves the GLMY subspaces:
\[
\pi\bigl(\Omega_m(\vec{C}_n^S)\bigr)\subset \Omega_m(\vec{C}_n^S).
\]
In particular, $\pi$ is a chain map on $(\Omega_*(\vec{C}_n^S),\partial)$ and $\pi|_{\Omega_*(\vec{C}_n^{1,2})}=\mathrm{Id}$.
\end{lemma}

\begin{proof}
Take $u\in\Omega_m(\vec{C}_n^S)$.  By definition, $u\in A_m(\vec{C}_n^S)$ and $\partial u\in \Omega_{m-1}(\vec{C}_n^S)\subset A_{m-1}(\vec{C}_n^S)$.
We must show $\pi(u)\in A_m(\vec{C}_n^S)$ (allowedness) and then $\partial(\pi(u))\in \Omega_{m-1}$ follows from
$\partial\pi=\pi\partial$.

\medskip\noindent
\textbf{(i) Allowedness of $h\partial u$.}
Since $\partial u\in A_{m-1}$ and $h$ sends allowed elementary paths to allowed ones (it only inserts an intermediate
vertex along a step in $S$), we have $h(\partial u)\in A_m$.

\medskip\noindent
\textbf{(ii) Allowedness of $\partial h(u)$.}
Write $u=\sum c_p p$ as a linear combination of allowed elementary $m$-paths $p$.  Then
\[
\partial h(u)=\sum c_p\,\partial h(p).
\]
We claim that every non-allowed elementary $m$-path $q$ has zero coefficient in $\partial h(u)$.

Fix such a non-allowed $q$.  Since $h(p)$ is allowed, $q$ can only appear in $\partial h(p)$ as an interior face of $h(p)$, i.e.\ by deleting an interior vertex of $h(p)$ and merging two consecutive steps whose sum exceeds $d$.
Let $j=j(p)$ be the split index for $p$, and let the inserted vertex in $h(p)$ sit between the steps
$s_{j}$ (split into $1$ and $s_j-1$).
There are three types of interior deletions in $h(p)$:

\smallskip
\emph{Type 1: deletions away from the inserted vertex.}
If we delete an interior vertex not adjacent to the inserted one, the two merged increments in $h(p)$
are identical to a pair of adjacent increments in $p$ (with the same surrounding context).
Therefore the resulting merged increment in $q$ (which is $>d$) corresponds to a forbidden $(m-1)$-face of $p$
appearing in $\partial p$.

\smallskip
\emph{Type 2: delete the vertex immediately left of the inserted vertex.}
This merges $(s_{j-1},1)$ into $s_{j-1}+1$.  If this is $>d$, then necessarily $s_{j-1}=d$, and deleting the
corresponding vertex in $p$ merges $(s_{j-1},s_j)$ into $d+s_j>d$, hence again produces a forbidden face in $\partial p$.

\smallskip
\emph{Type 3: delete the vertex immediately right of the inserted vertex.}
This merges $(s_j-1,s_{j+1})$ into $(s_j-1)+s_{j+1}$.  If this exceeds $d$, then $s_j+s_{j+1}>d+1>d$, so deleting
the corresponding vertex in $p$ merges $(s_j,s_{j+1})$ into $s_j+s_{j+1}$, again a forbidden face in $\partial p$.

\medskip
In all cases, a forbidden $m$-face $q$ of $h(p)$ canonically determines a forbidden $(m-1)$-face $F(q)$ of $p$,
obtained by also deleting the inserted vertex (i.e.\ collapsing the split) after the interior deletion.
Moreover, for fixed $q$ the set of $p$ for which $q$ occurs in $\partial h(p)$ is exactly the set of $p$ for which
$F(q)$ occurs in $\partial p$ with the same outside context; the only difference is a global sign depending on the
relative position of the inserted vertex.  Concretely, one checks in the three cases above that the coefficient
contributed by a given $p$ satisfies
\[
\mathrm{coeff}_q(\partial h(p))=\pm\,\mathrm{coeff}_{F(q)}(\partial p),
\]
where the sign $\pm$ depends only on the deletion type (hence on $q$), not on $p$.

Therefore,
\[
\mathrm{coeff}_q(\partial h(u))
=\sum_{p} c_p\,\mathrm{coeff}_q(\partial h(p))
=\pm\sum_{p} c_p\,\mathrm{coeff}_{F(q)}(\partial p)
=\pm\,\mathrm{coeff}_{F(q)}(\partial u).
\]
But $u\in\Omega_m$ implies $\partial u\in A_{m-1}$, so every forbidden $(m-1)$-path has coefficient $0$ in $\partial u$.
In particular, $\mathrm{coeff}_{F(q)}(\partial u)=0$, hence $\mathrm{coeff}_q(\partial h(u))=0$.
This holds for every forbidden $q$, so $\partial h(u)\in A_m$.

\medskip
Combining (i) and (ii), we have
\[
\pi(u)=u-\partial h(u)-h\partial u\in A_m(\vec{C}_n^S).
\]
Similarly, $\partial u\in\Omega_{m-1}$, we also have
\[\partial(\pi(u))=\pi(\partial u)\in A_{m-1}.\] 
Hence $\pi(u)\in\Omega_m$.
The identity $\pi|_{\Omega_*(\vec{C}_n^{1,2})}=\mathrm{Id}$ follows from $h=0$ on defect-zero paths.
\end{proof}

\noindent\textbf{Step 2: chain-level decrease of the defect profile.}

\begin{lemma}[Lexicographic decrease on $\Omega_m$]
Let $m\ge 0$ and $u\in\Omega_m(\vec{C}_n^S)$.  If $E_{\max}(u)\neq (0,\dots,0)$, then
\[
E_{\max}\bigl(\pi(u)\bigr) <_{\mathrm{lex}} E_{\max}(u).
\]
Consequently, for every $u\in\Omega_m$ the sequence $u,\pi(u),\pi^2(u),\dots$ stabilizes after finitely many steps
to an element with defect profile $(0,\dots,0)$, i.e.\ supported on $\{1,2\}$-steps.
\end{lemma}

\begin{proof}
Write $u=\sum c_p p$ and let $E_{\max}(u)=E(p_0)$ for some $p_0$ in the support; choose $p_0$ so that
$E(p_0)$ is maximal and among those, the first long-step index $j(p_0)$ is minimal.

Consider $\pi(p_0)=p_0-(\partial h+h\partial)(p_0)$.  By construction, the face of $h(p_0)$ obtained by deleting the inserted
vertex is exactly $p_0$, and it appears in $\partial h(p_0)$ with coefficient $+1$ (this is why we chose the prefactor
$(-1)^{j+1}$).  Hence the term $p_0$ cancels in $\pi(p_0)$.

Now examine any elementary path $q$ that appears in $\pi(p_0)$ with nonzero coefficient and is allowed
(so $q$ can contribute to $E_{\max}(\pi(u))$).  Such a $q$ arises from:
\begin{enumerate}
\item an allowed face of $h(p_0)$ other than the one deleting the inserted vertex, or
\item an allowed term in $h(\partial p_0)$ (here we use that in $\Omega_m$ all forbidden faces cancel, so only allowed faces
      survive when we pass from $\partial p_0$ to $\partial u$ and then apply $h$).
\end{enumerate}
In either case, one checks directly on step-words that $q$ is obtained from $p_0$ by one of the following local moves:
\begin{itemize}
\item either the first long step $s_{j(p_0)}=\ell$ is replaced by $\ell-1$ (hence $e_{j(p_0)}$ decreases by $1$), or
\item the first long step is moved to the right (so the defect profile agrees up to a longer prefix of zeros/unchanged
      entries, hence becomes smaller in lex order once the split has propagated).
\end{itemize}
The only way to produce a term with the same defect profile as $p_0$ would be to create a new long step strictly
to the left of $j(p_0)$ by merging shorter steps inside $\partial p_0$ and then re-splitting via $h$.
But such contributions assemble into coefficients of forbidden faces in $\partial u$ (precisely the ``partial invariance''
issue noted in blue), and they vanish because $u\in\Omega_m$ implies $\partial u$ has no forbidden faces.
Therefore, among the allowed terms that survive in $\pi(u)$, none can have defect profile $\ge_{\mathrm{lex}}E(p_0)$,
and at least one strict decrease occurs because $p_0$ itself is cancelled.  This yields the claimed inequality
$E_{\max}(\pi(u))<_{\mathrm{lex}}E_{\max}(u)$.

Since the set of defect profiles $\{0,\dots,d-2\}^m$ is finite and $<_{\mathrm{lex}}$ is a well-order on it,
iterating $\pi$ must terminate at defect $(0,\dots,0)$.
\end{proof}

\textbf{Step 3: definition of $\Pi$ and the deformation retraction.}
For $u\in\Omega_m(\vec{C}_n^S)$, define $N(u)$ to be the number of strict decreases needed to reach defect $0$, i.e.
the smallest $N$ such that $E_{\max}(\pi^N(u))=(0,\dots,0)$. Define
\[
\Pi(u):=\pi^{N(u)}(u)\in \Omega_m(\vec{C}_n^{1,2}).
\]
Because $\pi$ is a chain map on $\Omega_*(\vec{C}_n^S)$ and $\Omega_*(\vec{C}_n^{1,2})$ is a subcomplex,
$\Pi$ is a chain map $\Omega_*(\vec{C}_n^S)\to \Omega_*(\vec{C}_n^{1,2})$.
Moreover, for $u\in\Omega_*(\vec{C}_n^{1,2})$ we have $h(u)=0$ hence $\pi(u)=u$ and thus $\Pi(u)=u$; therefore
$\Pi\circ i=\mathrm{Id}$.

\medskip
\noindent\textbf{Step 4: chain homotopy between $i\circ\Pi$ and $\mathrm{Id}$.}
Define, for $u\in\Omega_m(\vec{C}_n^S)$,
\[
H(u):=\sum_{k=0}^{N(u)-1} h\bigl(\pi^k(u)\bigr)\in \Omega_{m+1}(\vec{C}_n^S),
\]
where the sum is empty (hence $H(u)=0$) if $N(u)=0$.

Using $\pi=\mathrm{Id}-(\partial h+h\partial)$ and a telescoping sum,
\[
u-\pi^{N(u)}(u)=(\partial h+h\partial)\sum_{k=0}^{N(u)-1}\pi^k(u)=\partial H(u)+H(\partial u).
\]
Since $\Pi(u)=\pi^{N(u)}(u)$ and $i$ is inclusion, this identity reads
\[
\mathrm{Id}-i\circ \Pi=\partial H+H\partial,
\]
which exhibits $i\circ\Pi$ as chain-homotopic to the identity.  This completes the proof of Theorem~\ref{thm:S12d}.
\end{proof}

By homotopy invariance of path homology, we can reduce the  computation to digraph $\vec{C}_n^{1,2}$.
\begin{corollary} For $S=\{1,2,\ldots,d\}$, $2<d<n/2$, we have
\[H_0^{\mathrm{path}}(\vec{C}_n^{S})\cong \K,\qquad H_1^{\mathrm{path}}(\vec{C}_n^{S})\cong \K,\qquad H_m^{\mathrm{path}}(\vec{C}_n^{S})=0\ \ (m\ge 2).\]    
\end{corollary}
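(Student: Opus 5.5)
The plan is to deduce the Corollary from Theorem~\ref{thm:S12d}, combined with a computation of $H_*^{\mathrm{path}}(\vec{C}_n^{1,2})$.

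\textbf{Step 1: reduce to $S=\{1,2\}$.} Theorem~\ref{thm:S12d} gives chain maps $i$ and $\Pi$ with $\Pi\circ i=\mathrm{Id}$ and $i\circ\Pi\simeq\mathrm{Id}$. Passing to homology, $i_*\circ\Pi_*=\mathrm{Id}$, and $i_*\circ\Pi_*=\mathrm{Id}$ on $H_*^{\mathrm{path}}(\vec{C}_n^S)$ because chain-homotopic maps induce equal maps on homology. Hence
\[
i_*:\ H_m^{\mathrm{path}}(\vec{C}_n^{1,2})\to H_m^{\mathrm{path}}(\vec{C}_n^S)
\]
is an isomorphism for every $m$. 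So it suffices to compute $H_*^{\mathrm{path}}(\vec{C}_n^{1,2})$ for $n>2d\ge 6$. In particular $n\ge 7>5$ and $2<n/2$, so Theorem~\ref{thm:1s}(1) applies.

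\textbf{Step 2: compute $H_*^{\mathrm{path}}(\vec{C}_n^{1,2})$.} The hypotheses $n\ge5$, $s=2<n/2$ of Theorem~\ref{thm:1s}(1) hold, so it gives $H_0\cong\K$, $H_1\cong\K$ and $H_m=0$ for $m\ge2$, which is the claim.

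To avoid relying on the sketch given for Theorem~\ref{thm:1s}(1), I would verify the rank computation directly as follows.
\begin{itemize}
\item Use the basis $\{\alpha_a^{(m)},\beta_a^{(m)}\}_{a\in\Z_n}$ of $\Omega_m$. The proof of Lemma~\ref{lem:fixed-start-2dof} uses only that the merged steps $3,4$ are illegal, and these are nonzero and distinct from $1,2$ modulo $n$ for $n\ge5$.
\item The boundary formulas \eqref{eqrec} hold verbatim, since the face computation is local.
\item The Fourier symbol is therefore the same matrix $M_{s_m}(\lambda)$. It has $\det=0$ and entry $-s_m\neq0$, so it has rank $1$ for each of the $n$ modes $\lambda^n=1$. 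This gives $\rank\partial_m=n$ for $m\ge3$.
\end{itemize}
With $\dim\Omega_m=2n$, this yields $\dim\ker\partial_m=n=\rank\partial_{m+1}$, so $H_m=0$ for $m\ge3$.

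\textbf{Step 3: low degrees.} Since the graph is strongly connected, $\rank\partial_1=n-1$. I would compute $\rank\partial_2$ from the explicit images $\partial\alpha_a^{(2)}=e_{a,a+1}+e_{a+1,a+2}-e_{a,a+2}$ and $\partial\beta_a^{(2)}$ by a $2\times2$ symbol analysis on each mode. For $\lambda\neq1$ the symbol should have rank $1$. For $\lambda=1$ the rank is also $1$, and the class of $\sum_a e_{a,a+1}$ is not hit. This gives $\rank\partial_2=n$, hence $\dim H_1=2n-(n-1)-n=1$, and $\dim H_2=\dim\ker\partial_2-\rank\partial_3=n-n=0$.

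The main obstacle is Step 3: the degree-$2$ and degree-$1$ boundary formulas are not covered by \eqref{eqrec}, which is stated for $n\ge3$. I would carry out this small explicit calculation first, checking the $\lambda=1$ mode separately.
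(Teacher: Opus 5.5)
Your proposal is correct and follows the paper's own route: the paper deduces the corollary by transporting homology along the chain homotopy equivalence of Theorem~\ref{thm:S12d} and then quoting the $s=2$ case of Theorem~\ref{thm:1s}. Two small remarks: the first identity in your Step~1 should read $\Pi_*\circ i_*=\mathrm{Id}$, and the degree-$2$ case you flag as the main obstacle is not really one, because the formulas \eqref{eqrec} (with $\alpha_{a+1}^{(n-1)}$ in place of the misprinted $\beta_{a+1}^{(n-1)}$) already hold in degree $2$, so the same rank-one symbol handles $\partial_2$, including the mode $\lambda=1$.
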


\bigskip

\section{Fourier block decomposition and no wrap-around \texorpdfstring{$S$}{S}}
\label{sec:Fourier}

Note that the circulant digraphs are among the simplest highly symmetric digraphs: the cyclic shift $\tau(a)=a+1$ is an automorphism of order $n$. This implies:
\begin{itemize}
\item \textbf{Vertex-transitivity}: all vertices look the same, so local computations globalize.
\item \textbf{Uniform degree}: $|S|$-out-arrows (and $|S|$-in-arrows).
\item \textbf{Group methods}: isomorphism, automorphisms, quotients, and lifts can be studied via group actions.
\end{itemize}

In this section, we will use the shift automorphism $\tau$ and Fourier ideas to study the path complex and path homology of circulant digraphs, as we have done for the example $\vec{C}_5^{1,2}$.
In particular, we will understand how $n$ and $S$ matter in the path homology in some sense.

\subsection{Fourier block decomposition}\label{sub:Fourier}

\begin{proposition}[Fourier block decomposition]\label{prop:Fourier}

Assume $\K$ contains all $n$-th roots of unity (e.g.\ $\K=\mathbb C$). For each $\lambda\in\K$ with $\lambda^n=1$, define the
$\lambda$-eigenspace
\[
\Omega_m^{(\lambda)}:=\{\,\alpha\in \Omega_m:\ \tau\alpha=\lambda\,\alpha\,\}.
\]
Then we have the direct sum decomposition
\begin{equation}\label{eq:fourier-split}
\Omega_m \cong \bigoplus_{\lambda^n=1} \Omega_m^{(\lambda)},\qquad
\partial(\Omega_m^{(\lambda)})\subset \Omega_{m-1}^{(\lambda)}.
\end{equation}
Each block is finite-dimensional, and the matrices of $\partial\big|_{\Omega_{*}^{(\lambda)}}$ have entries that are Laurent polynomials in $\lambda$. Consequently,
\begin{equation}\label{eq:homology-split}
H_m^{\mathrm{path}}(\vec{C}_n^S) \cong \bigoplus_{\lambda^n=1} H_m^{(\lambda)},\qquad
H_m^{(\lambda)}:=H_m(\Omega_*^{(\lambda)},\partial).
\end{equation}
\end{proposition}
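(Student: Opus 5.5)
The plan is to use only that $\tau$ is a digraph automorphism of order $n$ and that $\K$ has characteristic $0$ and contains the $n$-th roots of unity. First, $\tau$ maps allowed elementary $m$-paths bijectively to allowed elementary $m$-paths, since $(v_0;\,s_1,\dots,s_m)\mapsto(v_0+1;\,s_1,\dots,s_m)$ preserves the step word. Hence $\tau$ acts on $A_m(\vec{C}_n^S)$. It also commutes with $\partial$ on the full regular path space, because deleting a vertex commutes with relabeling vertices. It follows that $\tau$ preserves $\Omega_m$: if $\partial u\in A_{m-1}$, then $\partial(\tau u)=\tau(\partial u)\in A_{m-1}$. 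Moreover $\tau^n=\mathrm{Id}$ on $\Omega_m$.

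Next I diagonalize. Since $x^n-1$ has $n$ distinct roots in $\K$ (characteristic $0$), it is separable and splits over $\K$. The minimal polynomial of $\tau|_{\Omega_m}$ divides $x^n-1$, so $\tau|_{\Omega_m}$ is diagonalizable with eigenvalues among the $n$-th roots of unity. To make this explicit, I use the projectors
\[
P_\lambda:=\frac1n\sum_{k=0}^{n-1}\lambda^{-k}\tau^k .
\]
Character orthogonality $\sum_k(\mu/\lambda)^k=n\,\delta_{\lambda\mu}$ gives $P_\lambda P_\mu=\delta_{\lambda\mu}P_\lambda$, $\sum_\lambda P_\lambda=\mathrm{Id}$, and $\operatorname{im}P_\lambda=\Omega_m^{(\lambda)}$. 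This yields the direct sum decomposition in \eqref{eq:fourier-split}. Because $\partial$ commutes with $\tau$, it commutes with every $P_\lambda$, so $\partial(\Omega_m^{(\lambda)})\subset\Omega_{m-1}^{(\lambda)}$. Finite-dimensionality is immediate, since $\dim A_m=n|S|^m<\infty$.

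For the Laurent polynomial claim, I use the isomorphism $A_m\cong\K[\Z_n]\otimes(\K^S)^{\otimes m}$. Here $\tau$ acts only on the first factor, so $A_m^{(\lambda)}$ has basis $f_w^\lambda:=\sum_a\lambda^{-a}(a;w)$, with $w$ ranging over step words in $S^m$, exactly as with $\alpha_\lambda^{(n)}$ in the proof of Theorem \ref{thm1}. Applying $\partial$ to $f_w^\lambda$, the face deleting $v_0$ starts at $a+s_1$, so reindexing contributes a factor $\lambda^{s_1}$. The interior faces merge two steps and keep the starting vertex, so they contribute coefficients $\pm1$. The last face contributes $\pm1$. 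Thus in the bases $f_w^\lambda$ the matrix of $\partial$ has entries in $\Z[\lambda^{\pm1}]$, reading steps as integer representatives. Then $\Omega_m^{(\lambda)}$ is the kernel of the composite of $\partial$ with projection onto the forbidden faces, which is again a Laurent-polynomial matrix. Choosing bases of these kernels, for instance by Gaussian elimination or, as in the worked examples, from $\tau$-orbit sums of local generators, expresses $\partial|_{\Omega^{(\lambda)}_*}$ by Laurent-polynomial entries. There is one caveat: for a basis chosen uniformly in $\lambda$, one may need rational functions of $\lambda$, or must pick generators as Fourier sums of start-vertex-local elements. I would state it in the latter form, using the decomposition $\Omega_m=\bigoplus_a\Omega_m(a)$ from Lemma \ref{basis}, where $\tau$ maps $\Omega_m(a)$ onto $\Omega_m(a+1)$.

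Finally, \eqref{eq:homology-split} follows because the complex splits as a direct sum of the subcomplexes $(\Omega^{(\lambda)}_*,\partial)$, and homology commutes with direct sums. The only delicate point is the Laurent-polynomial normalization of the bases. I would handle it with the local-generator argument: $\Omega_m(0)$ is fixed, a basis $\{g_i\}$ of it is transported by $\tau^a$, and one takes $\sum_a\lambda^{-a}\tau^a g_i$, whereupon $\partial$ picks up only monomial factors $\lambda^{k}$ from the start-vertex shifts.
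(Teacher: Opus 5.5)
Your proposal is correct and follows the same route as the paper. $\tau$ commutes with $\partial$, and its minimal polynomial divides the separable polynomial $t^n-1$, so $\Omega_m$ splits into eigenspaces (you make this explicit with the projectors $P_\lambda$). The Fourier generators $f_w^{(\lambda)}$ then give Laurent-polynomial entries through the reindexing factor $\lambda^{s_1}$, and homology splits blockwise.

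Your treatment of the Laurent-polynomial claim is more careful than the paper's remark that ``any'' Fourier basis works. You take Fourier sums $\sum_a\lambda^{-a}\tau^a g_i$ of a basis of $\Omega_m(0)$. This is valid for every circulant, because forbidden faces come only from interior deletions, which keep the starting vertex.
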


\begin{proof} We have seen that for each $m\geq0$,
\[A_m(\vec{C}_n^S;\K)\cong \K[\Z_n]\otimes(\K^S)^{\otimes m}.\]
Then each $A_m(\vec{C}_n^S;\K)$ (thus $\Omega_m(\vec{C}_n^S)$) is finite dimensional. 

The automorphism $\tau:\vec{C}_n^S\rightarrow\vec{C}_n^S$ is of order $n$ and also induces an isomorphism
\[\tau:\left(\Omega_*(\vec{C}_n^S;\K),\partial\right)\rightarrow\left(\Omega_*(\vec{C}_n^S;\K),\partial\right).\]
Then the minimal polynomial $M(t)$ of $\tau$ divides $t^n-1$. Because $\K$ contains all $n$-th roots of unity and $\mathrm{char}(\K) = 0$, the polynomial $t^n-1$ splits into distinct linear factors, so $\tau$ is diagonalizable. Therefore,
\[\Omega_m(\vec{C}_n^S;\K) \cong \bigoplus_{\lambda^n=1} \Omega_m^{(\lambda)}(\vec{C}_n^S;\K),\qquad H_m^{\mathrm{path}}(\vec{C}_n^S) \cong \bigoplus_{\lambda^n=1} H_m(\Omega_*^{(\lambda)},\partial).\]

Let us write an allowed elementary $m$-path as $(a; ~s_1,\dots,s_m)$ with $s_i\in S$, and for each fixed word $w=(s_1,\dots,s_m)$, let $\{(a;w)\}_{a\in\Z_n}$ be its $\tau$-orbit. Passing to Fourier modes amounts to applying the discrete Fourier transform in the vertex variable:
\[
f_w^{(\lambda)}:=\sum_{a\in\Z_n}\lambda^{-a}(a;w),
\qquad \tau f_w^{(\lambda)}=\lambda f_w^{(\lambda)}.
\]
Because $\partial$ is translation-invariant, $\partial f_w^{(\lambda)}$ is a linear combination of $f_{w'}^{(\lambda)}$ with coefficients of the form $\pm \lambda^{\ell}$,
coming from re-indexing $a\mapsto a+\ell$ in the Fourier sum. Equivalently, with $t=\lambda$, these coefficients lie in
$\mathbb{Z}[t,t^{-1}]$ and are evaluated at $t=\lambda$.
Restricting from $A_m$ to $\Omega_m$ and after choosing any Fourier bases of $\Omega_m^{(\lambda)}$, the matrix entries remain Laurent polynomials evaluated at $\lambda$.
\end{proof}

By Proposition \ref{prop:Fourier}, we have the Fourier basis for $\Omega_0^{(\lambda)}$ and $\Omega_1^{(\lambda)}$ as follows
\[
v^{(\lambda)}:=\sum_{a\in \Z_n}\lambda^{-a}e_a,\qquad 
e_s^{(\lambda)}:=\sum_{a\in \Z_n}\lambda^{-a}e_{a,a+s}, ~s\in S.
\]
Then it is easy to compute
\begin{equation}\label{eq:d1-symbol}
\partial e_s^{(\lambda)} = (\lambda^s-1)\,v^{(\lambda)}.
\end{equation}
For $S=\{\gamma_0,\gamma_1,\ldots,\gamma_{d-1}\}$, then the ``symbol'' of $\partial_1:\Omega_1^{(\lambda)}\to \Omega_0^{(\lambda)}$ is the $1\times |S|$ matrix
\[
M_1(t)=\begin{bmatrix} t^{\gamma_0}-1 & t^{\gamma_1}-1~~\cdots & t^{\gamma_{d-1}}-1\end{bmatrix},\qquad (t=\lambda).
\]

\begin{lemma}
Assume $1\in S$. Then $\vec{C}_n^S$ is strongly connected, hence
\[
H_0(\vec{C}_n^S)\cong \K.
\]
Moreover, in the Fourier splitting \eqref{eq:homology-split},
\[
H_0^{(\lambda)}=0\quad\text{for all }\lambda\neq 1,
\qquad
\dim_\K H_0^{(1)}=1.
\]
\end{lemma}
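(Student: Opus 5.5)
The plan is to compute $H_0$ blockwise using the Fourier bases just introduced and formula \eqref{eq:d1-symbol}. First, recall $\Omega_0=A_0$ has basis $\{e_a\}_{a\in\Z_n}$. On it $\tau$ acts by the regular representation, so each eigenspace $\Omega_0^{(\lambda)}$ is one-dimensional, spanned by $v^{(\lambda)}$. Likewise $\Omega_1=A_1$ has basis $\{e_{a,a+s}\}$, and $\Omega_1^{(\lambda)}$ is spanned by $\{e_s^{(\lambda)}\}_{s\in S}$. These vectors are linearly independent for distinct $s$, because they are supported on disjoint sets of edges. Hence $H_0^{(\lambda)}=\Omega_0^{(\lambda)}/\partial_1\Omega_1^{(\lambda)}$. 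By \eqref{eq:d1-symbol}, the image of $\partial_1$ in this block is spanned by the scalars $(\lambda^s-1)v^{(\lambda)}$ for $s\in S$; equivalently, by the entries of the symbol $M_1(\lambda)$.

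Second, we split into the two cases for $\lambda$.
\begin{itemize}
\item If $\lambda\neq1$, take $s=1\in S$. Then $\partial e_1^{(\lambda)}=(\lambda-1)v^{(\lambda)}\neq0$. So $\partial_1$ surjects onto the one-dimensional space $\Omega_0^{(\lambda)}$, and $H_0^{(\lambda)}=0$.
\item If $\lambda=1$, every entry $\lambda^s-1$ vanishes. Then $\partial_1|_{\Omega_1^{(1)}}=0$, so $H_0^{(1)}=\Omega_0^{(1)}=\K\,v^{(1)}$ has dimension $1$.
\end{itemize}
Summing over the $n$ roots of unity via \eqref{eq:homology-split} gives $H_0(\vec{C}_n^S)\cong\K$.

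Third, we give the strong-connectivity statement independently, as a sanity check that also works over fields without roots of unity. The path $a\to a+1\to\cdots\to a+k$ joins any two vertices, so $\vec{C}_n^S$ is strongly connected. Therefore $\partial_1$ has image equal to the augmentation kernel $\{\sum c_a e_a:\sum c_a=0\}$, since it is spanned by the differences $e_{a+1}-e_a$. This yields $H_0\cong\K$ directly, consistent with the Cayley-digraph connectivity criterion ($\langle S\rangle\ni 1$ generates $\Z_n$).

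The only point needing care is the first step: checking that $\Omega_0^{(\lambda)}$ is exactly one-dimensional and not larger. This follows because $\sum_\lambda\dim\Omega_0^{(\lambda)}=n$ and each $v^{(\lambda)}$ is a nonzero eigenvector, using the distinctness of the $n$ roots of unity in characteristic $0$. Everything else is immediate from \eqref{eq:d1-symbol}.
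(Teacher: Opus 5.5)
Your proposal is correct and follows the paper's proof: the same blockwise computation, using $\partial e_1^{(\lambda)}=(\lambda-1)v^{(\lambda)}\neq 0$ for $\lambda\neq1$ and the vanishing of every $\lambda^s-1$ at $\lambda=1$. Your extra checks fill in details the paper leaves implicit. These are the one-dimensionality of $\Omega_0^{(\lambda)}$, and the direct augmentation argument giving strong connectivity and $H_0\cong\K$ without Fourier analysis.
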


\begin{proof}
For the Fourier claim, note that $\Omega_0^{(\lambda)}$ is $1$-dimensional, spanned by $v^{(\lambda)}$.
Equation \eqref{eq:d1-symbol} with $s=1$ gives $\partial _1^{(\lambda)}e_1^{(\lambda)}=(\lambda-1)v^{(\lambda)}$.
\begin{itemize}
    \item If $\lambda\neq 1$, then $\lambda-1\neq 0$, so $\partial_1:\Omega_1^{(\lambda)}\to \Omega_0^{(\lambda)}$ is surjective and
$H_0^{(\lambda)}=0$.
    \item  If $\lambda=1$, then $\partial_1=0$ on $\Omega_1^{(1)}\to \Omega_0^{(1)}$ so $\dim H_0^{(1)}=1$.
\end{itemize}
\end{proof}

For higher degrees, one can similarly construct bases of $\Omega_m$ indexed by step-words in $S^m$ and write down the symbol matrix $M_m$ of $\partial_m:\Omega_m^{(\lambda)}\rightarrow\Omega_{m-1}^{(\lambda)}$. And 
\begin{align*}
    \dim H_m^{(\lambda)}=~&\dim\ker M_m(\lambda)-\dim\im M_{m+1}(\lambda)\\
    =~&\dim \Omega_m^{(\lambda)}-\rank M_m(\lambda)-\rank M_{m+1}(\lambda).
\end{align*}

\subsection{No wrap-around connection set \texorpdfstring{$S$}{S} and Stability}
\label{sub:stability}

The decomposition \eqref{eq:homology-split} shows that any dependence on prime vs.\ composite $n$ can only occur through which roots of unity appear, i.e.\ through which orders divide $n$.

\begin{example} Choose $S=\{3\}$, and consider $n$ to be the prime number $7$ and composite number $9$ respectively:
\begin{itemize}
    \item For $n=7$: we have
    \[H_0^{\mathrm{path}}(\vec{C}_7^3;\K)\cong\K,\qquad H_1^{\mathrm{path}}(\vec{C}_7^3;\K)\cong\K,\qquad H_{m\ge 2}^{\mathrm{path}}(\vec{C}_7^3;\K)\cong0.\]
    \item For $n=9$: $\gcd(9,3)=3$, the circulant digraph $\vec{C}_9^3$ has $3$ connected components, given by 3-cycles 
    \[0\rightarrow3\rightarrow6\rightarrow0; \qquad 1\rightarrow4\rightarrow7\rightarrow1;\qquad 2\rightarrow5\rightarrow8\rightarrow2.\]
    Thus, we have
    \[H_0^{\mathrm{path}}(\vec{C}_9^3;\K)\cong\K^3,\qquad H_1^{\mathrm{path}}(\vec{C}_9^3;\K)\cong\K^3,\qquad H_{m\ge 2}^{\mathrm{path}}(\vec{C}_9^3;\K)\cong0.\]
\end{itemize}
\end{example}

In general, ranks of symbol matrices are governed by vanishing of minors, hence by cyclotomic factors. Moreover, for fixed finite $S$ in the no wrap-around regime, only finitely many cyclotomic orders can ever matter. 

In the following, for simplicity, we assume that
\[S=\{1,\gamma_1,\ldots,\gamma_{d-1}\},\quad 1<\gamma_1<\cdots<\gamma_{d-1}<n/2.\]
There are two benefits for such a choice:
\begin{itemize}
\setlength{\itemsep}{0pt}
    \item  $1\in S$ forces strong connectivity and inserts the ubiquitous factor $(t-1)$ into the low-degree symbols (e.g.  $\partial e_1^{(\lambda)}=(\lambda-1)v^{(\lambda)}$), which tends to kill nontrivial Fourier modes in many families. 
    \item With the ``no wrap-around" choice of $S$, the middle faces are illegal depends only on the integer relations $\gamma_i+\gamma_{i+1}\in S$ and not on $n$.
\end{itemize}

We now state a stability result with such a choice of $S$.

\begin{definition}[Cyclotomic support set]\label{def:Qplus}
For each degree $k$, choose bases in each Fourier mode and write
$\partial_k$ as a symbol matrix $M_k(t)$ with entries in $\Z[t,t^{-1}]$ (defined up to multiplication by units).
Let $\Delta_{k,j}(t)$ range over a finite family of minors that controls the rank of $M_k(t)$ (e.g.\ all maximal minors).
Define the cyclotomic support set
\[
\mathcal Q_+(S):=\Bigl\{\, q\ge 2:\ \Phi_q(t)\ \text{divides at least one }\Delta_{k,j}(t)\ \text{for some }k\,\Bigr\}.
\]
\end{definition}

\begin{theorem}[Strong stability]\label{thm:strong-stability}
Under our choice of the connection set $S$, there exists a finite set $\mathcal Q_+(S)$ as in the above definition such that for every Fourier mode $\lambda$ with $\lambda^n=1$:
\begin{itemize}
\item[(1)] If $\lambda\neq 1$ and $\operatorname{ord}(\lambda)\notin \mathcal Q_+(S)$, then
\[
H_m^{(\lambda)}=0\qquad\text{for all }m\ge 0.
\]
\item[(2)] Consequently, the Betti numbers
\[
\beta_m(n):=\dim_\K H_m^{\mathrm{path}}(\vec{C}_n^S)
\]
can only change when $n$ becomes divisible by some $q\in\mathcal Q_+(S)$.
In particular, for all sufficiently large primes $p$ with $p\notin \mathcal Q_+(S)$, the sequence $\{\beta_m(p)\}_m$ is
constant (independent of $p$).
\end{itemize}
\end{theorem}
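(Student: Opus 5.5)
The plan is to make the symbol matrices independent of $n$, so that the Fourier block at $\lambda$ becomes the specialization at $t=\lambda$ of one fixed complex over $\Z[t,t^{-1}]$ (or $\mathbb{Q}[t,t^{-1}]$).

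\emph{Step 1: uniform symbol complex.} Under the no wrap-around hypothesis $1<\gamma_1<\cdots<\gamma_{d-1}<n/2$, a merged step $s_j+s_{j+1}$ is legal if and only if the integer relation $s_j+s_{j+1}\in S$ holds, and this test does not involve $n$. The linear conditions cutting $\Omega_m$ out of $A_m$, fibrewise over the start vertex as in Proposition~\ref{prop:Omega2} and Lemma~\ref{lem:fixed-start-2dof}, are therefore the same integer conditions on words $w\in S^m$ for every admissible $n$. I will choose once and for all a $\mathbb{Q}$-basis $\{b_i\}$ of the space $\Omega_m^{\mathrm{word}}\subset\mathbb{Q}^{S^m}$ of word-coefficient vectors satisfying these conditions. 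Then $f_{b_i}^{(\lambda)}=\sum_a\lambda^{-a}(a;b_i)$ is a basis of $\Omega_m^{(\lambda)}$, and $\dim\Omega_m^{(\lambda)}=\dim\Omega_m^{\mathrm{word}}$ for every $\lambda$. As in the proof of Proposition~\ref{prop:Fourier}, $\partial$ acts on this basis by one matrix $M_m(t)$ with entries in $\mathbb{Q}[t,t^{-1}]$, independent of $n$, and $\partial_m^{(\lambda)}=M_m(\lambda)$. Since $M_{m-1}M_m=0$ holds as Laurent polynomials, this gives a complex $C_*$ over $R=\mathbb{Q}[t,t^{-1}]$ of finitely generated free modules.

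\emph{Step 2: generic exactness.} Let $r_m$ be the rank of $M_m$ over $\mathbb{Q}(t)$. Taking $\Delta_{m,j}$ to be the $r_m\times r_m$ minors, $\operatorname{rank}M_m(\lambda)=r_m$ exactly when some $\Delta_{m,j}(\lambda)\neq0$. For $\lambda$ with $\operatorname{ord}(\lambda)=q$ this happens precisely when $\Phi_q$ does not divide $\gcd_j\Delta_{m,j}$; I will build $\mathcal Q_+(S)$ from these gcds. Each nonzero minor has finitely many cyclotomic factors, so $\mathcal Q_+(S)$ is finite provided only finitely many degrees $m$ contribute; this is the first real issue. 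For $\lambda$ outside $\mathcal Q_+(S)$ one has $\dim H_m^{(\lambda)}=\dim\Omega_m^{\mathrm{word}}-r_m-r_{m+1}$, which equals the generic homology rank $\dim_{\mathbb{Q}(t)}H_m(C_*\otimes\mathbb{Q}(t))$. Part (1) therefore reduces to showing that $C_*\otimes\mathbb{Q}(t)$ is acyclic. For this I will exhibit a contracting homotopy with coefficients in $\mathbb{Q}(t)$, using the step $1\in S$. The model is the inserted-vertex operator $h$ of Theorem~\ref{thm:S12d}, with denominators involving $(t-1)$; the $0$-degree case, the Lemma on $H_0^{(\lambda)}$ where $\partial e_1^{(\lambda)}=(\lambda-1)v^{(\lambda)}$, is the seed. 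The argument I would try first is an Euler-characteristic check plus induction, showing that generic homology vanishes degree by degree. Any $\Phi_q$ occurring in the denominators of the homotopy is then added to $\mathcal Q_+(S)$.

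\emph{Step 3: Betti numbers.} Part (2) then follows from \eqref{eq:homology-split}:
\[
\beta_m(n)=\dim H_m^{(1)}+\sum_{\lambda\neq1,\ \operatorname{ord}\lambda\in\mathcal Q_+(S)}\dim H_m^{(\lambda)}.
\]
The $\lambda=1$ term is the specialization at $t=1$ of a fixed complex, so it does not depend on $n$. Each remaining term depends only on the order $q\mid n$ and not on $n$ itself. For a prime $p\notin\mathcal Q_+(S)$ only $\lambda=1$ survives, so $\beta_m(p)=\dim H_m^{(1)}$ is independent of $p$.

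\emph{Main obstacle.} The hardest part is finiteness together with generic acyclicity, since $\Omega_*$ can be nonzero in every degree (Lemma~\ref{basis}). I would handle it by a transfer-matrix argument: the local constraints are of finite range, so for large $m$ the $M_m(t)$ should become periodic in $m$, as the $2\times2$ pattern $M_{s_n}$ does for $\vec{C}_5^{1,2}$. Finitely many periods then give finitely many minors to control. If this periodicity fails in general, the statement can still be established with $\mathcal Q_+(S)$ defined degreewise. The honest fallback is to treat the finiteness of $\mathcal Q_+(S)$ as the hypothesis encoded in Definition~\ref{def:Qplus}, namely that a finite controlling family of minors is chosen.
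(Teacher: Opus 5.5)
There is a genuine gap, although your setup is sound and in places sharper than the paper's. Step~1 correctly identifies each block $\Omega_*^{(\lambda)}$ with the specialization at $t=\lambda$ of a single $n$-independent complex $C_*$ of finitely generated free $R=\K[t^{\pm1}]$-modules. Concretely, $C_*$ is the path complex of the infinite digraph $\Cay(\Z,S)$, with $t$ acting by translation. Detecting rank drops through the $r_m\times r_m$ minors and their gcd is also the right formulation. The paper's ``all maximal minors'' vanish identically whenever $M_k$ is not of full rank: for $S=\{1,2,4\}$, $M_2(t)$ is $3\times5$ of rank $2$, which would put every $q$ into $\mathcal Q_+(S)$. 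Step~3 is fine once part (1) is known.

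The problem is that part (1) rests entirely on two facts you do not prove: (a) acyclicity of $C_*\otimes_R\K(t)$, and (b) finiteness of the set of exceptional orders over \emph{all} degrees. Since $R$ is a PID, $H_m^{(\lambda)}\cong H_m(C)/(t-\lambda)H_m(C)\oplus\ker\bigl(t-\lambda:H_{m-1}(C)\to H_{m-1}(C)\bigr)$. So (a) and (b) say exactly that the path homology of $\Cay(\Z,S)$ is $R$-torsion in every degree, with only finitely many cyclotomic factors in its annihilators, uniformly in $m$. That is the substance of the theorem, not bookkeeping.

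The tools you propose do not deliver it.
\begin{itemize}
\item There is no Euler characteristic to check, because $C_*$ is generally unbounded ($\dim\Omega_m^{(\lambda)}=2$ for every $m\ge1$ when $S=\{1,2\}$). A degree-by-degree induction would need exactly the identity $r_m+r_{m+1}=\dim\Omega_m^{\mathrm{word}}$ that is in question.
\item The operator $h$ of Theorem~\ref{thm:S12d} splits a step $\ell$ as $(1,\ell-1)$, so it needs $\ell-1\in S$. It does not exist for, e.g., $S=\{1,2,5\}$ or $S=\{1,s\}$ with $s\ge3$. Where it does exist, it retracts onto $\Omega_*(\vec C_n^{1,2})$ rather than contracting anything.
\item Transfer-matrix periodicity fails in general. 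For $S=\{1,2,4\}$ one has $\dim\Omega_m^{(\lambda)}=2m+1$, so the $M_m(t)$ grow and there is no finite list of minors to control.
\item The fallback simply turns the existence claim into a hypothesis.
\end{itemize}

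The paper's own proof makes the same leap, going from ``each $M_k(\lambda)$ has the generic rank'' directly to ``the block complex is exact''. It is also silent on finiteness over infinitely many $k$, so the missing input cannot be borrowed from it.

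One sufficient condition for (a) you could try: the $M_m(t)$ are polynomial in $t$, since $t$ enters only through the first-face term $t^{s_1}$. By the universal coefficient theorem over $\K[t]$, it would then suffice to show that the specialized complex $(\Omega_*^{\mathrm{word}},M_*(0))$ is acyclic, i.e.\ $\partial$ with its first-face term deleted. Part (b) would still need a separate argument that is uniform in $m$.
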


\begin{proof}
By the Fourier splitting \eqref{eq:homology-split}, it suffices to analyze each $\lambda$-block. For fixed $k$, the rank of
$M_k(\lambda)$ is determined by the values of the controlling minors $\Delta_{k,j}(\lambda)$.
Assume that $\lambda$ is a primitive $r$th root of unity. If $r\notin\mathcal Q_+(S)$, then
by definition no $\Phi_r(t)$ divides any controlling minor, hence not all minors vanish at $t=\lambda$; therefore each
$M_k(\lambda)$ has the ``generic'' rank and the block complex is exact (except possibly in the trivial mode $\lambda=1$).
Part (2) follows because the set of possible exceptional orders is precisely $\mathcal Q_+(S)$.
\end{proof}

\begin{corollary}\label{cor:sufficient}
Suppose that for each $m\ge 0$, one can exhibit a minor of some symbol matrix
$M_{m+1}(t)$ equal to $\pm(t-1)$ (up to a unit) and verify that $\dim\ker M_m(\lambda)=2$ for all $\lambda\neq 1$.
Then $\mathcal Q_+(S)=\varnothing$ and the entire homology is supported in the trivial mode $\lambda=1$:
\[
H_m^{\mathrm{path}}(\vec{C}_n^S)\cong H_m^{(1)}(\vec{C}_n^S)\quad\text{for all }~m\ge 0~\text{ and all }~n>2\gamma_{d-1}.
\]
In particular, $\{\beta_m(n)\}_{m\ge 0}$ is independent of $n$ (hence independent of prime/composite).
\end{corollary}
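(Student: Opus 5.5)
The plan is to work one Fourier block at a time, using Proposition~\ref{prop:Fourier}, and to show that under the two hypotheses every block with $\lambda\neq1$ is acyclic. The claimed identification $H_m^{\mathrm{path}}\cong H_m^{(1)}$ then follows directly from the splitting \eqref{eq:homology-split}.

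\emph{Step 1 (rank at $\lambda\neq 1$).} I read the hypothesis as saying that each $M_{m+1}(t)$ has a minor of size $r_{m+1}$ equal to $\pm t^{k}(t-1)$, where $r_{m+1}$ is the generic rank. I would first check that this really is the maximal possible size, by comparing with the rank of $M_{m+1}$ over the fraction field $\K(t)$. The only cyclotomic factor dividing this minor is $\Phi_1$. So for every $\lambda\neq1$ with $\lambda^n=1$, the minor is nonzero at $t=\lambda$. This gives $\rank M_{m+1}(\lambda)\ge r_{m+1}$, hence equality, and the same holds for every degree.

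\emph{Step 2 (exactness in the block).} Using the formula
\[
\dim H_m^{(\lambda)}=\dim\ker M_m(\lambda)-\rank M_{m+1}(\lambda),
\]
the hypothesis $\dim\ker M_m(\lambda)=2$ reduces the problem to showing $r_{m+1}=2$. I would get this from the block chain complex itself. The condition $\partial^2=0$ gives $\rank M_{m+1}(\lambda)\le\dim\ker M_m(\lambda)=2$. For equality, I would compare with generic data: over $\K(t)$, the complex $\Omega_*\otimes\K(t)$ has dimensions $\dim\Omega_m^{(\lambda)}$. I expect these dimensions to be independent of $\lambda$ and of $n$ once $n>2\gamma_{d-1}$, since in the no wrap-around regime the $\Omega$-constraints depend only on integer relations in $S$. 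Then the Euler-characteristic bookkeeping, together with $\dim\ker=2$ in every degree, forces generic rank $2$. This is the step where I expect the real difficulty: the hypothesis must be interpreted so that the exhibited minor is the full $2\times2$ minor. If that is not automatic, I would state it as the intended reading, i.e.\ the minor certifies $\rank M_{m+1}(\lambda)=2=\dim\ker M_m(\lambda)$.

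\emph{Step 3 (conclusion).} With the rank fixed, $H_m^{(\lambda)}=0$ for all $\lambda\neq1$ and all $m$. No nontrivial cyclotomic order causes a rank drop, so $\mathcal Q_+(S)=\varnothing$ in the sense of Definition~\ref{def:Qplus}, and the splitting gives $H_m^{\mathrm{path}}\cong H_m^{(1)}$. For $n$-independence, I note that the $\lambda=1$ block is the complex of $\tau$-invariant chains. Its basis is indexed by step words in $S^m$ with the $\Omega$-conditions. Because there is no wrap-around for $n>2\gamma_{d-1}$, these conditions, and the matrices $M_k(1)$, do not depend on $n$. Hence $\beta_m(n)=\dim H_m^{(1)}$ is independent of $n$, whether $n$ is prime or composite.
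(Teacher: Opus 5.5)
\textbf{There is a gap in Step~2.} Your overall architecture is right: show every block with $\lambda\neq1$ is acyclic, apply the splitting \eqref{eq:homology-split}, then get $n$-independence because the word basis of $\Omega_m^{(\lambda)}$ and the matrices $M_k(t)$ do not depend on $n$ once $n>2\gamma_{d-1}$. The problem is that under your Step~1 reading (a minor of size equal to the generic rank $r_{m+1}$), you only obtain $\rank M_{m+1}(\lambda)=r_{m+1}$. Hence $\dim H_m^{(\lambda)}=2-r_{m+1}$, which is just the generic homology, and nothing you have forces it to vanish.

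The Euler-characteristic bookkeeping cannot fill this. The block complex is unbounded above: for $S=\{1,2,5\}$, $\dim\Omega_m^{(\lambda)}=4$ for every $m\ge2$. Even for a bounded complex, a single alternating sum cannot determine individual ranks. Concretely, the data $\dim\ker M_m(\lambda)=2$ and $\rank M_{m+1}(\lambda)\le2$ are consistent with three-dimensional chain groups and rank-one differentials in every degree. Such a complex satisfies $\partial^2=0$ and has one-dimensional homology everywhere. So $r_{m+1}=2$ does not follow from the other data; it is exactly what the hypothesis has to supply.

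\textbf{Your fallback is the correct proof and should be the main line.} The paper gives no separate proof, and the kernel hypothesis is there because generic rank alone does not give exactness. Read the exhibited minor as having size $\dim\ker M_m(\lambda)=2$. For a root of unity $\lambda\neq1$, the value $\pm\lambda^{k}(\lambda-1)$ is nonzero, so $\rank M_{m+1}(\lambda)\ge2$ directly, with no detour through ranks over $\K(t)$. Also $\im M_{m+1}(\lambda)\subseteq\ker M_m(\lambda)$ gives $\rank M_{m+1}(\lambda)\le 2$, so $H_m^{(\lambda)}=0$.

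\textbf{Two smaller points also need fixing.}

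First, in degree $0$ the hypothesis cannot hold literally, because $\Omega_0^{(\lambda)}=\K v^{(\lambda)}$ is one-dimensional. There you should use the $1\times1$ minor $t-1$ of $M_1(t)$ against $\dim\ker M_0(\lambda)=1$; this is the earlier lemma for $1\in S$.

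Second, $\mathcal Q_+(S)=\varnothing$ does not follow from ``no rank drop''. Definition~\ref{def:Qplus} concerns cyclotomic divisors of the chosen controlling minors, and other minors of the right size can carry $\Phi_q$ with $q\ge2$ without any rank drop. For example, for $S=\{1,2,4\}$ the minor of $M_2(t)$ on rows $1,2$ and columns $1,2$ is $(t+1)(t^2+1)$, while the minor on rows $2,3$ and columns $1,2$ is $1$. The conclusion does hold once you take the exhibited minors themselves as the controlling family, since $t-1=\Phi_1$ is excluded by the requirement $q\ge2$.
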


\begin{example}\label{ex:Cn124125}
Let $n>10$, and we can choose the connection sets to be
\[S_1=\{1,2,4\},\quad S_2=\{1,2,5\}.\]

Then we have
\begin{equation}\label{eq:Cn124125}
    H_m^{\mathrm{path}}(\vec{C}_n^{1,2,4};\K)\cong\begin{cases}
        \K & m=0,1\\
        0 & m\ge2
    \end{cases},
    \qquad
    H_m^{\mathrm{path}}(\vec{C}_n^{1,2,5};\K)\cong\begin{cases}
        \K & m=0,2\\
        \K & m=1\\
        0 & m\ge3
    \end{cases}.
\end{equation}
The Betti numbers are independent of $n$ (hence independent of prime/composite). We will give the explicit computation of the two digraphs in the appendix \ref{appendix}.
\end{example}

We also compute many other examples, and find that there are no non-trivial path homologies of degree greater than $2$. We make the conjecture as follows.

\begin{conjecture}
For $S=\{1,\gamma_1,\ldots,\gamma_{d-1}\}$, $1<\gamma_1<\ldots<\gamma_{d-1}<n/2$, we have
\[H_m^{\mathrm{path}}(\vec{C}_n^S)=0,\quad \forall~ m\geq 3.\]
\end{conjecture}

\bigskip
\section{The circulant graphs and the discrete tori}\label{sec:circulantgraph}

In this section, we consider the (undirected) circulant graph,  denote it by $C_n^S$. For $S=\{1,\gamma_1,\cdots,\gamma_{d-1}\}$, $1<\gamma_1<\ldots<\gamma_{d-1}<n/2$, it is well known (see \cite{Louis} for example or as follows) that the graph $C_n^S$ is isomorphic to the $d$-dimensional discrete torus $G_S:=\Z^d/\Lambda_S\Z^d$ with nearest neighbours connected to each other, where
\[
\Lambda_S=\begin{pmatrix}
n & -\gamma_1 & \cdots & -\gamma_{d-1}\\
0 & & I_{d-1} &
\end{pmatrix}.
\]

\subsection{The discrete tori and the matrix \texorpdfstring{$\Lambda_S$}{LambdaS}}\label{sub:discretetori}
Given a full-rank sublattice $L\subset \Z^d$, the quotient
\[
G=\Z^d/L
\]
is a finite abelian group of order $|\det B|$ if $B$ is any integer matrix whose columns form a $\Z$-basis of $L$.

The standard $d$-dimensional discrete torus is
\[
\mathbb{T}(n_1,\dots,n_d)\;:=\;\Z^d/\diag(n_1,\dots,n_d)\Z^d\;\cong\;\Z_{n_1}\times\cdots\times \Z_{n_d},
\]
with $n_i\ge 2$.  This is genuinely $d$-dimensional in the sense that it has $d$ independent cyclic factors. 

\begin{lemma}
For any full-rank $B\in \GL_d(\Z)$, there exist unimodular matrices $U,V\in \GL_d(\Z)$ such that
\[
UBV=\diag(t_1,\dots,t_d),\qquad t_i\mid t_{i+1}, ~~i=1,\ldots,d-1
\]
the Smith normal form (SNF).  It implies
\[
\Z^d/B\Z^d\;\cong\;\Z_{t_1}\times\cdots\times \Z_{t_d}.
\]
\end{lemma}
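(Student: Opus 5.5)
We read the hypothesis as ``$B$ is a $d\times d$ integer matrix with $\det B\neq 0$'' (an element of $\GL_d(\mathbb{Q})\cap M_d(\Z)$), since for $B\in\GL_d(\Z)$ proper the statement is trivial with all $t_i=1$. The plan is the classical Euclidean-algorithm argument over the PID $\Z$. It is carried out by elementary row and column operations, each realized by multiplying by a unimodular matrix: swapping two rows (columns), multiplying a row (column) by $-1$, and adding an integer multiple of one row (column) to another. Since products of unimodular matrices are unimodular, it suffices to reduce $B$ to the required diagonal form by such moves. We then record $U$ as the product of the row moves and $V$ as the product of the column moves.

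\textbf{Step 1 (diagonalization by induction on $d$).} Among all matrices obtainable from $B$ by elementary operations, pick one whose entry of smallest nonzero absolute value is as small as possible, and move that entry to position $(1,1)$; call it $a$. Write each entry $b_{i1}$ of the first column as $b_{i1}=q_ia+r_i$ with $0\le r_i<|a|$, and subtract $q_i$ times row $1$ from row $i$. By minimality of $|a|$, every $r_i=0$. Clearing the first row with column operations works the same way, giving $a\oplus B'$. Next, if some entry of $B'$ is not divisible by $a$, add its row to row $1$ and repeat the division step; this produces a smaller nonzero remainder, contradicting minimality. Hence $a$ divides every entry of $B'$. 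Apply induction to $B'$, whose entries remain multiples of $a$ under elementary operations. This gives $\diag(t_1,\dots,t_d)$ with $t_1\mid t_2\mid\cdots$, and we make each $t_i>0$ by sign changes. Each $t_i\neq 0$ because $\det B\neq 0$ and unimodular matrices have determinant $\pm1$.

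\textbf{Step 2 (the quotient).} The key observation is that $U$ and $V$ are automorphisms of $\Z^d$. Since $V\Z^d=\Z^d$, we have $B\Z^d=BV\Z^d$. The map $x\mapsto Ux$ is an automorphism of $\Z^d$ carrying $BV\Z^d$ onto $UBV\Z^d=\bigoplus_i t_i\Z$. It therefore induces
\[
\Z^d/B\Z^d\;\cong\;\Z^d/\textstyle\bigoplus_i t_i\Z\;\cong\;\Z_{t_1}\times\cdots\times\Z_{t_d}.
\]
As a byproduct, $|\Z^d/B\Z^d|=\prod t_i=|\det B|$, which matches the claim made just before the lemma.

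The only delicate point is the divisibility step in Step 1, namely guaranteeing $t_1\mid t_2$. It is handled by the minimal-absolute-value trick above. Termination is clear, since the minimal absolute value is a positive integer that strictly decreases whenever the procedure has to restart.
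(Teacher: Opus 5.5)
Your argument is correct and complete, but it is not a reconstruction of anything in the paper: the paper gives no proof of this lemma and quotes the Smith normal form as a standard fact.

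\textbf{How the paper handles the case it needs.} Its only application is to $\Lambda_S$, and there it argues directly, in two ways.
\begin{itemize}
\item It uses the surjection $\overline{\Phi}$ together with the order formula $|\Z^d/L|=|\det B|$, which is asserted without proof just before the lemma.
\item It uses the explicit row operations $\text{Row}_1\leftarrow\text{Row}_1+\gamma_{j-1}\text{Row}_j$, which produce $\diag(n,1,\dots,1)$. A permutation is then still needed to reach the convention $t_i\mid t_{i+1}$.
\end{itemize}

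\textbf{What your route supplies.} Your minimal-entry Euclidean argument is the classical proof the paper takes for granted, and it gives the statement for every nonsingular integer matrix. Your Step 2 byproduct, $|\Z^d/B\Z^d|=\prod t_i=|\det B|$, also justifies the order formula the paper relies on in the $\overline{\Phi}$ argument, namely $|G_S|=|\det\Lambda_S|=n$.

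\textbf{The hypothesis.} Reading it as $\det B\neq 0$ is right. Taken literally, $B\in\GL_d(\Z)$ makes the lemma trivial, and the paper's own matrix has $\det\Lambda_S=n$, so $\Lambda_S\notin\GL_d(\Z)$.

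\textbf{A cosmetic point.} You take a global minimum over all matrices reachable by elementary operations, so the procedure never ``restarts''. Your closing remark about termination is therefore unnecessary; it belongs to the algorithmic version of the same argument.
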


The number of nontrivial invariants $t_i>1$ measures how many independent cyclic factors occur. We call that number the ``rank'' of the discrete torus factorization.
A cyclic group $\Z_n$ corresponds to SNF $\diag(1,\dots,1,n)$ (rank one).

\medskip
Fix integers $n\geq5$, $\gamma_i$ and $1<\gamma_1<\cdots<\gamma_{d-1}<n/2$. Consider the integer matrix
\begin{equation}\label{eq:LambdaS}
\Lambda_S=\begin{pmatrix}
n & -\gamma_1 & \cdots & -\gamma_{d-1}\\
0 & & I_{d-1} &
\end{pmatrix}.
\end{equation}
Let $L_S:=\Lambda_S\Z^d\subset \Z^d$ and $G_S:=\Z^d/L_S$. Let $e_1,\dots,e_d$ be the standard basis of $\Z^d$.
From the columns of $\Lambda_S$, one reads the relations
\[
 n e_1\in L_S,\qquad e_{j}-\gamma_{j-1}e_1\in L_S\ \ (j=2,\dots,d).
\]
Hence in $G_S$,
\[
 [e_j]=\gamma_{j-1}[e_1]\quad(j\ge 2),\qquad n[e_1]=0.
\]
So every element of $G_S$ is a multiple of $[e_1]$: the group is generated by one element, hence cyclic of order $n$.

Define a homomorphism
\[
\Phi:\Z^d\to \Z_n,\qquad \Phi(v_1,\dots,v_d)=v_1+\gamma_1v_2+\cdots+\gamma_{d-1}v_d\pmod n.
\]
A direct check using \eqref{eq:LambdaS} shows that $\Phi$ vanishes on every column of $\Lambda_S$,
hence $L_S\subseteq \ker\Phi$ and $\Phi$ descends to a map $\overline{\Phi}:G_S\to \Z_n$.
Since $\overline{\Phi}([e_1])=1$, the induced map is surjective.  As $|G_S|=|\det \Lambda_S|=n$,
we conclude:
\begin{equation}\label{eq:cyclic}
G_S=\Z^d/\Lambda_S\Z^d\;\cong\;\Z_n.
\end{equation}
We can also understand $G_S$ from the SNF viewpoint. Perform the unimodular row operations
\[
\text{Row}_1 \leftarrow \text{Row}_1+\gamma_{j-1}\,\text{Row}_j\qquad(j=2,\dots,d).
\]
This kills all off-diagonal entries $-\gamma_{j-1}$ in the top row, producing $\diag(n,1,\dots,1)$.
Thus $\Lambda_S$ has SNF $\diag(n,1,\dots,1)$ and $G_S$ has exactly one nontrivial invariant.
In the ``discrete torus'' language, $\Z^d/\Lambda_\Gamma\Z^d$ is a degenerate $d$-torus: $d-1$ directions have
period $1$ (i.e.\ they are collapsed), and only one direction has period $n$.
Equivalently, \eqref{eq:cyclic} shows the underlying vertex group is $\Z_n$.

Now we make $G_S$ to be a graph with nearest neighbours connected to each other, and $C_n^S$ to be the circulant graph of $\Z_n$ with connection set $S\cup(-
S)$. By construction, we have

\begin{proposition}
The map $\overline{\Phi}:G_S\rightarrow C_n^S$ is a graph isomorphism.    
\end{proposition}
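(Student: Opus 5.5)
We prove that $\overline{\Phi}:G_S\to C_n^S$ is a graph isomorphism, where the graph structure on $G_S$ joins $[v]$ and $[v\pm e_i]$ for $i=1,\dots,d$. By \eqref{eq:cyclic}, $\overline{\Phi}$ is already a group isomorphism, so in particular a bijection on vertex sets. It therefore remains only to check that edges correspond to edges in both directions.

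\textbf{Step 1: the edge generators match.} The map $\Phi$ sends the standard basis to the connection set:
\[
\Phi(e_1)=1,\qquad \Phi(e_j)=\gamma_{j-1}\quad (j=2,\dots,d).
\]
Hence $\Phi(\{\pm e_1,\dots,\pm e_d\})=S\cup(-S)$.

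\textbf{Step 2: edges go to edges.} Let $[v]$ and $[v\pm e_i]$ be adjacent in $G_S$. By additivity,
\[
\overline{\Phi}([v\pm e_i])=\overline{\Phi}([v])\pm\Phi(e_i),
\]
and $\pm\Phi(e_i)\in S\cup(-S)$ by Step 1. So the images are adjacent in $C_n^S$.

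\textbf{Step 3: edges pull back to edges.} Let $x$ and $x+s$ be adjacent in $C_n^S$, with $s\in S\cup(-S)$. By Step 1, $s=\Phi(\pm e_i)$ for some $i$. Bijectivity gives a unique $[v]$ with $\overline{\Phi}([v])=x$. Then
\[
\overline{\Phi}([v\pm e_i])=x+s,
\]
so the preimage of $x+s$ is $[v\pm e_i]$, which is adjacent to $[v]$. Thus $\overline{\Phi}$ and its inverse both preserve adjacency, and $\overline{\Phi}$ is a graph isomorphism.

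\textbf{The main point of care: distinctness of generators.} The argument requires the $2d$ classes $[\pm e_i]$ to be distinct and nonzero in $G_S$, and the $2d$ elements $\pm1,\pm\gamma_j$ to be distinct and nonzero in $\Z_n$. Otherwise the degrees on the two sides could differ, or there could be loops or multi-edges. Under $1<\gamma_1<\cdots<\gamma_{d-1}<n/2$ this holds:
\begin{itemize}
\item the values $1,\gamma_j$ lie strictly between $0$ and $n/2$ and are distinct;
\item their negatives lie strictly between $n/2$ and $n$.
\end{itemize}
Transporting this through $\overline{\Phi}$ gives the same distinctness for the classes $[\pm e_i]$. Hence both graphs are simple $2d$-regular graphs, and the edge correspondence of Steps 2 and 3 is a bijection on edge sets.
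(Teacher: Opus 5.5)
Your proof is correct and follows the paper's route. The paper just says ``by construction'' after showing that $\overline{\Phi}$ is a group isomorphism, and your Steps 1--3 (group isomorphism plus $\Phi(\pm e_i)=\pm 1,\pm\gamma_j$) spell that argument out. Your distinctness check is a harmless extra that only matters if edges are counted with multiplicity.
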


\subsection{Path complex of circulant graphs}\label{sub:circulatgraph}
We can understand $C_n^S$ as the symmetric digraph obtained
by replacing every undirected edge $(a,b)$ with two arrows $a\rightarrow b$ and $b\rightarrow a$. Equivalently, $C_n^S$ is the Cayley digraph with the connection set $S\cup (-S)$. That is,
\[C_n^S:=\Cay(\Z_n,\pm S).\]

Then we can define its path complex. The circulant graph $C_n^S$, as a symmetric digraph, may have more triangle-type or cube-type fillings than the digraph $\vec{C}_n^S$, so sometimes, their path homologies are different from each other.

\begin{example}
For $n=5$, $S=\{1,2\}$, 
the path homologies of the circulant digraph $\vec{C}_5^{1,2}$ and the circulant graph $C_5^{1,2}$ (now $C_5^{1,2}$ is a complete graph) are given by
\[
H_m^{\mathrm{path}}(\vec{C}_5^{1,2};\K)\cong\begin{cases}
    \K  & m=0,1\\ 
    0  & m\geq2
\end{cases}~,\qquad
H_m^{\mathrm{path}}(C_5^{1,2};\K)\cong\begin{cases}
    \K  & m=0\\ 
    0  & m\geq1
\end{cases}~.
\]    
\end{example}

\smallskip
\begin{example}

For $n=7$, $S=\{1,3\}$, see Fig. \ref{fig:C713}.
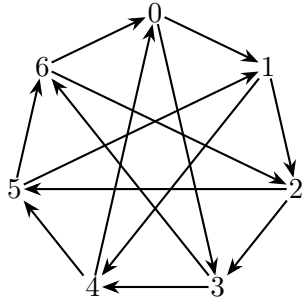
\begin{figure}[htbp]
\centering
\begin{tikzpicture}[
  scale=0.8,
  >=Stealth,
  node/.style={inner sep=0.6pt}
]
\def\n{7}
\def\r{2.4}

\foreach \i in {0,...,6} {
  \node[node] (\i) at ({90-360*\i/\n}:\r) {\i};
}

\foreach \i in {0,...,6} {
  \foreach \s in {1,3} {
    \pgfmathtruncatemacro{\j}{mod(\i+\s,\n)}
    \draw[->,line width=0.8pt] (\i) -- (\j);
  }
}

\end{tikzpicture}
\caption{circulant digraph $\vec{C}_7^{1,3}$}
\label{fig:C713}
\end{figure}

we compute the path homologies of the circulant digraph $\vec{C}_7^{1,3}$ in Example 4.3 of\cite{TY2}: 
\begin{align*}
&H_0^{\mathrm{path}}(\vec{C}_7^{1,3};\K)\cong\K;\\
&H_1^{\mathrm{path}}(\vec{C}_7^{1,3};\K)=\Span_{\K}\bigg\{[e_{03}+e_{36}+e_{60}],[e_{01}+e_{12}+e_{23}+e_{34}+e_{45}+e_{56}+e_{60}]\bigg\};\\
&H_2^{\mathrm{path}}(\vec{C}_7^{1,3};\K)=\Span_{\K}\bigg\{[(e_{014}-e_{034})+(e_{125}-e_{145})+(e_{236}-e_{256})+(e_{340}-e_{360})\\
&\qquad\qquad\qquad\qquad\qquad\quad+(e_{451}-e_{401})+(e_{562}-e_{512})+(e_{603}-e_{623})]\bigg\};\\
&H_m^{\mathrm{path}}(\vec{C}_7^{1,3};\K)=0,\quad m\geq 3.
\end{align*}
But for the symmetric digraph $C_7^{1,3}$, see Fig. \ref{fig:C713pm}. 
\begin{figure}[h]
\centering
\begin{tikzpicture}[
  scale=0.8,
  >=Stealth,
  node/.style={inner sep=0.6pt}
]
\def\n{7}
\def\r{2.4}

\foreach \i in {0,...,6} {
  \node[node] (\i) at ({90-360*\i/\n}:\r) {\i};
}

\foreach \i in {0,...,6} {
  \pgfmathtruncatemacro{\j}{mod(\i+1,\n)}
  \draw[<->,line width=0.8pt] (\i) -- (\j);

  \pgfmathtruncatemacro{\k}{mod(\i+3,\n)}
  \draw[<->,line width=0.8pt] (\i) -- (\k);
}

\end{tikzpicture}
\caption{circulant graph $C_7^{1,3}$ as a symmetric digraph}
\label{fig:C713pm}
\end{figure}
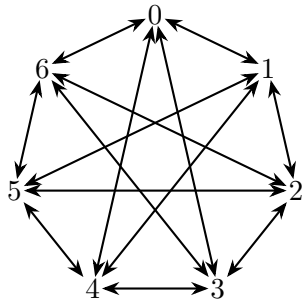

\noindent Note that the 2-cycle in $\vec{C}_7^{1,3}$ is exactly $W^{1,3}$ in Definition \ref{def:Wcycle}. Now we have
\[S\cup (-S)=\{1,3,-1,-3\}=\{1,3,4,6\}\subset \Z_7.\]
Consider the $3$-paths
\[P^{1,3}=\sum_{a\in\Z_7}e_{a,a+1,a+4,a+5},\qquad P^{4,4}=\sum_{a\in\Z_7}e_{a,a+4,a+1,a+5},\]
by Proposition \ref{prop:P}, we have
\[\partial P^{1,3}=W^{1,4}-W^{1,3},\qquad \partial P^{4,4}=W^{4,1}-W^{4,4}=-W^{1,4}.\]
Then $W^{1,3}$ becomes a boundary
\[W^{1,3}=\partial(-P^{1,3}-P^{4,4}).\]

\end{example}

\begin{remark} The path complex of $C_7^{1,3}$ is still infinite dimensional, and it requires tedious analysis to arrive at its path homology. But it is easy to compute the clique complex\footnote{Clique complex $\mathrm{Cl}(G)$ of a graph $G$ is the simplicial complex whose simplices are cliques (complete subgraphs) of $G$. Thus a 2-simplex exists if and only if $G$ contains a triangle; more generally, $k$-simplices correspond to $(k+1)$-cliques.} of $C_7^{1,3}$ and get the clique homology:
\[H_0(\mathrm{Cl}(C_7^{1,3});\K)\cong\K,\qquad H_1(\mathrm{Cl}(C_7^{1,3});\K)\cong\K,\qquad H_{m\ge 2}(\mathrm{Cl}(C_7^{1,3});\K)\cong0.\]
\end{remark}

\bigskip
\appendix

\section{Two more examples}\label{appendix}

In this appendix, we will analyze the path complexes of $\vec{C}_n^{1,2,4}$ and $\vec{{C}}_n^{1,2,5}$ for $n>10$, and compute the full Betti numbers, i.e. Example \ref{ex:Cn124125}.

\subsection{The circulant digraph \texorpdfstring{$\vec{C}_n^{1,2,4}$, $n>10$}{124}}

First, let us look at the low-dimensional part of its path complex. Using the Fourier bases
\[
v^{(\lambda)}=\sum_a \lambda^{-a}e_a,\qquad
e_s^{(\lambda)}=\sum_a \lambda^{-a}e_{a,a+s},
\]
we have
\[
M_1(t)=\begin{bmatrix} t-1 & t^2-1 & t^4-1\end{bmatrix}.
\]
The Fourier generators of $A_2^{(\lambda)}$ are
\[f_{s_1,s_2}^{(\lambda)}=\sum_a \lambda^{-a}e_{a,a+s_1,a+s_1+s_2}.\]
The space $\Omega_2^{(\lambda)}$ is the subspace of $A_2^{(\lambda)}$ in which illegal middle faces cancel.
Since 
\[1+1=2\in S\quad\text{and}\quad 2+2=4\in S\] 
are the only legal sums, a convenient basis for $\Omega_2^{(\lambda)}$ is
\begin{align*}
\qquad &    A^{(\lambda)}=f_{1,1}^{(\lambda)}, \qquad B^{(\lambda)}=f_{2,2}^{(\lambda)},\\
C^{(\lambda)}=f_{1,2}^{(\lambda)}-f_{2,1}^{(\lambda)},&\qquad D^{(\lambda)}=f_{1,4}^{(\lambda)}-f_{4,1}^{(\lambda)},\qquad E^{(\lambda)}=f_{2,4}^{(\lambda)}-f_{4,2}^{(\lambda)}.
\end{align*}
With respect to the Fourier bases 
\[(e_1^{(\lambda)},e_2^{(\lambda)},e_4^{(\lambda)}) \text{ on } \Omega_1^{(\lambda)} \quad\text{and}
\quad (A^{(\lambda)},B^{(\lambda)},C^{(\lambda)},D^{(\lambda)},E^{(\lambda)}) \text{ on } \Omega_2^{(\lambda)},\]
the symbol matrix $M_2(t)$ for $\partial_2:\Omega_2^{(\lambda)}\to \Omega_1^{(\lambda)}$ is
\[
M_2(t)=
\begin{pmatrix}
t+1 & 0 & 1-t^2 & 1-t^4 & 0\\
-1 & t^2+1 & t-1 & 0 & 1-t^4\\
0 & -1 & 0 & t-1 & t^2-1
\end{pmatrix}.
\]
It's easy to check that $\rank M_2(t)=2$. Then,
\begin{itemize}
    \item for $\lambda\neq1$, we have
    \[\dim H_1^{(\lambda)}=\dim\Omega_1^{(\lambda)}-\rank M_1(\lambda)-\rank M_2(\lambda)=3-1-2=0;\]
    \item for $\lambda=1$, we have
    \[\dim H_1^{(1)}=\dim\Omega_1^{(1)}-\rank M_1(1)-\rank M_2(1)=3-0-2=1.\]
\end{itemize}

In general, we can get a closed-form basis for $\Omega_m$ for all $m\ge 1$.

\smallskip
As before, we write an allowed elementary $m$-path by its step word $w:=(s_1,\dots,s_m)\in \{1,2,4\}^m$ and define the Fourier generator
\[f_w^{(\lambda)}:=f_{s_1,\ldots,s_m}^{(\lambda)}=\sum_{a\in\Z_n}\lambda^{-a}(a;~s_1,\ldots,s_m).\]

For $m\ge 1$, we define the following two kinds of paths:
\begin{itemize}
    \item[(i)] for $0\le k\le m$, define the ``shuffle-alternator''
\[
U_{m,k}:=\sum_{w\in\mathrm{Sh}(1^{m-k},2^k)} (-1)^{\mathrm{inv}(w)}\, f^{(\lambda)}_w,
\]
where $\mathrm{inv}(w)$ counts inversions of the form $(2\text{ before }1)$.
    \item[(ii)] for $0\le k\le m-1$, define the one-$4$ alternator
\[
V_{m,k}:=\sum_{w\in\mathrm{Sh}(1^{m-k-1},2^k,4)} (-1)^{\mathrm{inv}(w)}\, f^{(\lambda)}_w,
\]
with inversion parity computed for the total order $1<2<4$.
\end{itemize}

\begin{lemma}
For $S=\{1,2,4\}$ and every $m\ge 1$,
\[
\Omega_m^{(\lambda)}=\mathrm{Span}\{U_{m,0},\dots,U_{m,m},\, V_{m,0},\dots,V_{m,m-1}\},
\]
hence $\dim \Omega_m^{(\lambda)}= (m+1)+m=2m+1$.
Moreover this basis is independent of $\lambda$.
\end{lemma}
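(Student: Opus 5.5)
Since $\tau$ acts freely on starting vertices, I will argue exactly as in Lemma \ref{basis} and Lemma \ref{lem:fixed-start-2dof}: first describe $\Omega_m(a)=\Omega_m\cap A_m(a)$ for a fixed starting vertex $a$, and then pass to Fourier modes. Write $P=\sum_w c_w\,(a;w)$ with $w\in\{1,2,4\}^m$. Endpoint faces are always allowed. So $P\in\Omega_m$ if and only if, for every internal position $j$, the forbidden faces obtained by merging $(s_j,s_{j+1})$ cancel.

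\textbf{Step 1: no accidental coincidences of faces.} The possible merged sums are $2,3,4,5,6,8$. Among these, $2,4\in S$ are legal, and the illegal ones are $3$ (from $12,21$), $5$ (from $14,41$), $6$ (from $24,42$) and $8$ (from $44$ only). Since $n>10$, these integers are distinct and nonzero mod $n$, and none of $3,5,6,8$ lies in $S$. A forbidden face $(a;s_1,\dots,s_{j-1},r,s_{j+2},\dots,s_m)$ has exactly one letter $r\notin S$. From the face we can therefore read off $j$, the outside context, and $r$, which determines the unordered pair $\{s_j,s_{j+1}\}$. Both words producing this face delete the same vertex $v_j$, so they carry the same sign $(-1)^j$. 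This is the point where I expect the only real care to be needed; it is also where the hypothesis $n>10$ (no wrap-around) is used.

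\textbf{Step 2: the linear constraints.} Step 1 gives the conditions $c_w=0$ whenever $w$ contains $44$, and $c_w+c_{w'}=0$ whenever $w'$ is obtained from $w$ by an adjacent swap $12\leftrightarrow21$, $14\leftrightarrow41$ or $24\leftrightarrow42$ in the same context. Conversely, any coefficient family satisfying these relations lies in $\Omega_m$, by the same bookkeeping. Adjacent swaps of distinct letters connect all rearrangements of a fixed multiset $1^{i}2^{k}4^{l}$, and each swap changes $\mathrm{inv}(w)$ by exactly one for the order $1<2<4$. Hence $c_w=(-1)^{\mathrm{inv}(w)}c$, where $c$ is a single scalar attached to the multiset, and this assignment is consistent. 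If $l\ge 2$, we move two $4$'s together by swaps, as in part (iii) of Lemma \ref{lem:fixed-start-2dof}, and obtain $c=0$.

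\textbf{Step 3: count and translate to Fourier modes.} Step 2 shows that $\Omega_m(a)$ has a basis of signed shuffle sums. There is one for each $(1^{m-k},2^k)$ with $0\le k\le m$ and one for each $(1^{m-k-1},2^k,4)$ with $0\le k\le m-1$. These are linearly independent because their supports are disjoint. Thus $\dim\Omega_m(a)=2m+1$, and $\Omega_m\cong\K[\Z_n]\otimes\Omega_m(0)$ with $\tau$ acting on the first factor. Applying $\sum_a\lambda^{-a}(\cdot)$ to these basis elements gives exactly $U_{m,k}$ and $V_{m,k}$. Each $\lambda$-eigenspace is therefore spanned by them and has dimension $2m+1$. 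The coefficients involve no $\lambda$, so the basis is independent of $\lambda$.
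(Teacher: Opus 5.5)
Your proof is correct and takes essentially the same route as the paper: the same constraints (antisymmetry under adjacent swaps $12,14,24$ and vanishing on words containing $44$), the same bubble-sort reduction $c_w=(-1)^{\mathrm{inv}(w)}c_{w^{\uparrow}}$, and the same count of sorted words with at most one $4$. The only differences are minor. You solve the constraints on $\Omega_m(a)$ and then Fourier-transform, while the paper writes $u=\sum_w c_w f_w^{(\lambda)}$ directly. You also spell out the no-coincidence check using $n>10$, the elimination of words with two or more $4$'s, and the converse inclusion, all of which the paper leaves implicit.
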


\begin{proof}
For $u\in\Omega_m^{(\lambda)}$, using the Fourier generator $f_w^{(\lambda)}$, we write it as
\[u=\sum_{w\in\{1,2,4\}^m}c_wf_w^{(\lambda)}.\]

\noindent\textbf{Step 1. $\dim \Omega_m^{(\lambda)}\leq 2m+1$:}
Note that the connection set $S=\{1,2,4\}$ satisfies:
\[1+1=2\in S, \quad 2+2=4\in S;\]
while
\[1+2=3\notin S,\quad 1+4=5\notin S, \quad 2+4=6\notin S,\quad 4+4=8\notin S.\]
Fix a position $i\in\{1,\ldots,m-1\}$ and fix the letters of a word outside positions $(i,i+1)$, we have
\begin{itemize}
    \item[(1)] If at positions $(i,i+1)$ the word contains ab with $\{a,b\}\subset\{1,2,4\}$ and $a\neq b$, then
    \[c_{\ldots~ab~\ldots}=-c_{\ldots~ba~\ldots},\quad \text{for } (a,b)\in\{(1,2),(1,4),(2,4)\};\]
    \item[(2)]  If a word contains the adjacent pattern $44$ at some position, then its coefficient is zero:
    \[w \text{ contains } 44~\Rightarrow ~c_w=0.\]
\end{itemize}

Furthermore, for each word $w$, let $w^{\uparrow}$ denote the nondecreasing word obtained by bubble-sorting $w$ using adjacent
swaps. Reuse the above result (1)(2), we understand that every coefficient $c_w$ in $u$ is determined by the coefficient of the sorted word $w^{\uparrow}$ via
\[c_w=(-1)^{\mathrm{inv}(w)}c_{w^{\uparrow}}.\]

For example, 
\begin{itemize}
\setlength{\itemsep}{0pt}
    \item if $w=(2,1,2,1,1)$, then
\[w^{\uparrow}=(1,1,1,2,2),\quad \mathrm{inv}(w)=3+2=5.\]
    \item if $w=(1,4,2,1)$, then
\[w^{\uparrow}=(1,1,2,4),\quad \mathrm{inv}(w)=0+2+1=3.\]
\end{itemize}

Thus, the only sorted words that can carry independent parameters are 
\[1^{m-k}2^k ~~(0\le k\le m),\qquad 1^{m-k-1}2^k4~~(0\le k\le m-1).\]
Thus, $\dim\Omega_m^{(\lambda)}\leq 2m+1$. 

\medskip
\noindent\textbf{Step 2. $U_{m,j}, V_{m,k}\in\Omega_m^{(\lambda)}$ and linearly independent:}
By definition and the above analysis for $u\in\Omega_m^{(\lambda)}$, we know that the illegal faces canceled in $\partial U_{m,j}$ and $\partial V_{m,k}$ respectively. Thus,
\[U_{m,j}, V_{m,k}\in\Omega_m^{(\lambda)}.\]
And it is easy to see that they are linearly independent.
\end{proof}

Next we need to study the symbol matrices $M_m(t)$ for $m\ge 3$ with respect to the ordered basis
\[
\mathcal{B}_m=\bigl(U_{m,0},\dots,U_{m,m},\, V_{m,0},\dots,V_{m,m-1}\bigr).
\]
and compute its rank. Then $M_m(t)$ is the $(2m-1)\times(2m+1)$ matrix representing
$\partial_m:\Omega_m^{(\lambda)}\to\Omega_{m-1}^{(\lambda)}$ under $t=\lambda$.

For completeness, we record explicitly the symbol matrices for $\partial_3$ and $\partial_4$:
\[
M_3(t)=
\begin{pmatrix}
t-1 & t^2-1 & 0 & 0 & 1-t^4 & 0 & 0\\
1 & t+1 & -t^2-1 & 0 & 0 & 1-t^4 & 0\\
0 & 0 & t-1 & t^2-1 & 0 & 0 & 1-t^4\\
0 & 0 & -1 & t+1 & t-1 & t^2-1 & 0\\
0 & 0 & 0 & 0 & 1 & t+1 & t^2-1
\end{pmatrix}.
\]
and
\[
M_4(t)=
\begin{pmatrix}
t+1& 0& 0& 0& 0& 0& 1-t^4& 0& 0\\
-1& t-1& t^2+1& 0& 0& 0& 0& 1-t^4& 0\\
0& 1& t+1& 0& 0& 0& 1& t-1& 0\\
0& 0& -1& t-1& t^2+1& 0& 0& 0& 1-t^4\\
0& 0& 0& 1& t+1& 0& 0& 1& t-1\\
0& 0& 0& 0& -1& t-1& 0& 0& t^2+1\\
0& 0& 0& 0& 0& 1& 0& 0& 1
\end{pmatrix}.
\]
With more efforts, we can see that, for any $n$-th root $\lambda$ (or any $t\in\K$),
\[\rank M_3(\lambda)=3,\qquad \rank M_4(\lambda)=4.\]

\begin{lemma}
For every $m\ge2$, and $n$-th root $\lambda$,
\[\rank M_m(\lambda)=m.\]
\end{lemma}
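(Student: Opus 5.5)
The plan is to sandwich $\rank M_m(\lambda)$ between an upper bound coming from $\partial^2=0$ and a lower bound coming from an explicit nonvanishing $m\times m$ minor. Both bounds will be organised as an induction on $m$. Recall from the preceding lemma that $\dim\Omega_m^{(\lambda)}=2m+1$ for $m\ge1$, and also $\dim\Omega_0^{(\lambda)}=1$. So $M_m(t)$ is a $(2m-1)\times(2m+1)$ matrix.

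\emph{Base case $m=2$.} I would check this directly from the displayed $M_2(t)$. For the lower bound, take the $2\times2$ minor in rows $2,3$ and columns $1,2$. It equals $(-1)(-1)-(t^2+1)\cdot 0=1$, so $\rank M_2(\lambda)\ge2$ for every $\lambda$.

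For the upper bound, use $M_1(t)M_2(t)=0$. When $\lambda\ne1$, $M_1(\lambda)\neq0$, hence $\rank M_2(\lambda)\le 3-1=2$. When $\lambda=1$, $M_1(1)=0$ gives no information, so I would instead compute directly. The third row of $M_2(1)$ is $(0,-1,0,0,0)$, and every column except the first two vanishes. Hence the rank is $2$, which is what the paper means by ``easy to check''.

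\emph{Upper bound for $m\ge3$.} Since $\partial_{m-1}\partial_m=0$ on the Fourier blocks, we have $M_{m-1}(\lambda)M_m(\lambda)=0$. Therefore
\[\rank M_m(\lambda)\le \dim\ker M_{m-1}(\lambda)=(2m-1)-\rank M_{m-1}(\lambda).\]
By the induction hypothesis $\rank M_{m-1}(\lambda)=m-1$, so $\rank M_m(\lambda)\le m$. This step is formal once the base case holds at every $\lambda$, including $\lambda=1$; that is exactly why the $m=2$ case had to be checked by hand.

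\emph{Lower bound for $m\ge3$: exhibit an $m\times m$ minor equal to $\pm1$.} The entries of $M_m$ that are independent of $\lambda$ come from interior deletions that merge $11\to2$ or $22\to4$. The outer faces always carry factors $\lambda^{s}$ or $\pm1$ and tend to produce binomials such as $t^k\pm1$. I would compute, in the basis $\mathcal B_m$, the coefficient of $\partial V_{m,k}$ on $V_{m-1,k}$ and of $\partial U_{m,k}$ on $U_{m-1,k-1}$ and $V_{m-1,\cdot}$. The goal is to find an injection of $m$ basis vectors of $\Omega_m^{(\lambda)}$ into basis vectors of $\Omega_{m-1}^{(\lambda)}$ such that, after ordering, the $m\times m$ submatrix is triangular with constant diagonal entries $\pm1$.

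The patterns in $M_3$ and $M_4$ suggest the right choice. In $M_3$, rows $2,4,5$ have constant entries $1,-1,1$ in columns $1,3,5$. In $M_4$, the constant entries $-1,1,-1,1$ sit in rows $2,3,6,7$ against columns $1,2,5,6$. I would guess a general rule, for example pairing $U_{m,k}$ with the face obtained by the last deletion or by a $22\to4$ merge, and then verify triangularity by comparing how many $2$'s and $4$'s the words contain (a lexicographic order on the sorted words $w^{\uparrow}$). Such a unit minor gives $\rank M_m(\lambda)\ge m$ for all $\lambda$, independently of $n$.

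I expect the main obstacle to be this last step: computing the $\pm1$ constants precisely. That requires summing the signs $(-1)^{\mathrm{inv}(w)}$ over all shuffles that contribute to a given merged face, and making sure these sums neither cancel nor get contaminated by $\lambda$-dependent terms in the chosen minor. If a uniform pairing proves elusive, the fallback is to row-reduce $M_m(t)$ over $\Z[t,t^{-1}]$ using the recursive block structure visible in $M_3$ and $M_4$. Here $M_m$ contains $M_{m-1}$-like blocks shifted along the diagonal plus new border columns, and the aim would be to show that the border contributes exactly one new unit pivot at each step.
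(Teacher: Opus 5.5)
Your base case is correct. The minor in rows $2,3$, columns $1,2$ of $M_2(t)$ equals $1$, and your direct check of $M_2(1)$ is right. The upper bound via $M_{m-1}(\lambda)M_m(\lambda)=0$ is also sound. It is a cleaner and rigorous substitute for the paper's Step~4, which argues by an informal row reduction.

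The gap is the lower bound $\rank M_m(\lambda)\ge m$ for $m\ge 3$. You describe a search for a unit minor but never exhibit one. Since your upper bound at level $m$ uses the lower bound at level $m-1$, nothing beyond $m=2$ is actually established.

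Your plan to extrapolate from the displayed $M_3(t)$ and $M_4(t)$ is also unsafe, because those displays contain errors. For instance, $\partial f_{222}=(\lambda^2-1)f_{22}+(f_{24}-f_{42})$. So in $M_3$ the $(V_{2,0},U_{3,3})$ entry is $0$, not $t+1$, and the $(V_{2,1},U_{3,3})$ entry is $1$, not $0$. Likewise, for the true $M_4$ the minor you single out (rows $2,3,6,7$, columns $1,2,5,6$) vanishes identically. Indeed, $\partial U_{4,0}=(\lambda+1)U_{3,0}-U_{3,1}$ and $\partial U_{4,1}=(1-\lambda^2)U_{3,0}+(\lambda-1)U_{3,1}$ have no components along $U_{3,2},V_{3,1},V_{3,2}$.

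What is missing is a closed form for the $\lambda$-independent entries, which come only from the legal interior merges.
\begin{enumerate}
\item For $k\ge2$, the $22\to4$ merges in $\partial U_{m,k}$ produce each face from a single word. A sign count gives exactly $(-1)^{m-1}V_{m-1,k-2}$, and this is the only $V$-component of $\partial U_{m,k}$.
\item The $11\to2$ merges contribute $(-1)^{m-k-1}\bigl(\sum_{p=0}^{k}(-1)^p\bigr)U_{m-1,k+1}$ to $\partial U_{m,k}$. These cancel for odd $k$, which is exactly the cancellation you feared. For $k=0$, however, they give $(-1)^{m-1}U_{m-1,1}$, and $\partial U_{m,0}$ has no $V$-component at all.
\end{enumerate}
Take rows $U_{m-1,1},V_{m-1,0},\dots,V_{m-1,m-2}$ and columns $U_{m,0},U_{m,2},\dots,U_{m,m}$. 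They give a block-triangular $m\times m$ minor equal to $\pm1$ for every $t$, and your $m=2$ minor is its first instance. With this minor your induction closes at every $\lambda$.

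Following the paper will not fill the gap. Its Step~3 pairs the $V$-row pivots with row $U_{m-1,0}$. The only nonzero entries of that row are $\lambda+(-1)^m$, $(-1)^{m-1}(\lambda^2-1)$ and $(-1)^{m-1}(\lambda^4-1)$, and all three vanish at $\lambda=1$ for odd $m$; already the first row of $M_3(1)$ is zero. So the row $U_{m-1,1}$ is needed there as well.
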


\begin{proof} We will prove this lemma by the following four steps.

\smallskip
\noindent\textbf{Step 1. Staircase pattern of $M_m(t)$.}
First, for the Fourier generator of the elementary path
\[f_{s_1,\ldots,s_m}^{(\lambda)}=\sum_{a\in\Z_n}\lambda^{-a}e_{a,a+s_1,a+s_1+s_2,\ldots,a+s_1+s_2+\cdots+s_m},\]
we have
\begin{equation}\label{eq:partialf}
\partial f_{s_1,\ldots,s_m}^{(\lambda)}=\lambda^{s_1}f_{s_2,\ldots,s_m}^{(\lambda)}+\sum_{j=1}^{m-1}f_{s_1,\ldots,s_j+s_{j+1},\ldots,s_m}^{(\lambda)}+(-1)^mf_{s_1,\ldots,s_{m-1}}^{(\lambda)}.
\end{equation}
Applying \eqref{eq:partialf} to our $U,V$-type shuffle sums, the only legal merges are $11\rightarrow2$ and $22\rightarrow4$. Then for each $m\ge3$, in terms of the bases $(\mathcal{B}_{m-1},\mathcal{B}_m)$, the matrix
$M_m(t)$ has the following support:
\begin{itemize}
    \item[(i)] The bottom $(m-1)$ rows. Index the bottom rows by $j=0,\ldots,m-2$, corresponding to $V_{m-1,j}$.
Then these rows have a pivot 1 in the $U$-column $U_{m,j+2}$, and the remaining nonzero entries lie in
the $V$–columns near $j$:
\begin{align*}
    \mathrm{Row}~ V_{m-1,2r}:& \quad[\mathrm{Column}~U_{m,2r+2}:1,\quad \mathrm{Column}~V_{m,2r}:t+1,\\
    &~\quad\mathrm{Column}~V_{m,2r+1}:1-t^2;] \\
    \mathrm{Row}~ V_{m-1,2r+1}:& \quad[\mathrm{Column}~U_{m,2r+3}:1,\quad \mathrm{Column}~V_{m,2r}:-1,\\
    &~\quad
    \mathrm{Column}~V_{m,2r+1}:t-1,\quad
    \mathrm{Column}~V_{m,2r+2}:t^2+1.] 
\end{align*}
\item[(ii)] The top $m$ arrows. Index the top rows by $i=0,\ldots,m-1$, corresponding to $U_{m-1,i}$. Then each
row is supported only on three consecutive $U$-columns and one adjacent $V$-column, and the pattern repeats with step 2. 
\end{itemize}

\medskip
\noindent\textbf{Step 2. a universal $(m-1)$-pivot block}

By the statement (i) in Step 1, the bottom $(m-1)$ rows have pivots $1$ in the $U$-columns:
\[U_{m,2},U_{m,3},\ldots,U_{m,m}.\]
These pivots lie in distinct columns, hence the bottom rows are linearly independent and then $\rank M_m(t)\ge m-1$.

\medskip
\noindent\textbf{Step 3. Lower bound $\rank M_m(t)\ge m$.}
We continue looking at the top row corresponding to $U_{m-1,0}$. By statement (ii), it has a nonzero entry in the column $U_{m,0}$ and is not supported in the pivot columns $U_{m,2},U_{m,3},\ldots,U_{m,m}$. Therefore it cannot be in the span of the bottom rows. Hence $\rank M_m(t)\ge m$.

\medskip
\noindent\textbf{Step 4. Upper bound $\rank M_m(t)\ge m$.}
Using the pivot columns $U_{m,2},\ldots, U_{m,m}$
provided by the bottom rows, perform row operations to eliminate those columns from the top $m$
rows. After this elimination, every remaining top row is supported only on the first two $U$-columns $U_{m,0}, U_{m,1}$ and on the $V$-columns. But the statement (ii) implies that the top block is generated
(after elimination) by at most one independent row (the support repeats in steps of 2, and each
other top row differs from a shift by a combination of the bottom pivot rows). Concretely: for $i\ge 1$
the top row $U_{m-1,i}$ can be reduced to a combination of the bottom row $V_{m-1,i-1}$ (which has pivot
at $U_{m,i+1}$) and neighboring bottom rows, leaving no new pivot in columns $U_{m,0}$ or $U_{m,1}$ beyond the
contribution of $U_{m-1,0}$. Hence the top block contributes at most one additional independent row. Thus, 
$\rank M_m(t)\leq (m-1)+1=m$.
\end{proof}

Using the dimension formula
\[\dim H_m^{(\lambda)}=\dim\Omega_m^{(\lambda)}-\rank M_m(\lambda)-\rank M_{m+1}(\lambda),\]
we arrive at the path homology of $\vec{C}_n^{1,2,4}$ ($n>10$).
\begin{corollary} Let $\lambda$ be the $n$-th root of $1$. 
The path homology of $\vec{C}_n^{1,2,4}$ ($n>10$) is given by
\[H_0^{(\lambda)}\cong\begin{cases}
    \K & \lambda=1\\
    0  & \lambda\neq1
\end{cases},
\qquad 
H_1^{(\lambda)}\cong\begin{cases}
    \K & \lambda=1\\
    0  & \lambda\neq1
\end{cases},
\qquad
H_m^{(\lambda)}\cong0,~~m\ge1.
\]
\end{corollary}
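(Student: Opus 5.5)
The plan is to obtain the corollary from the dimension formula
\[
\dim H_m^{(\lambda)}=\dim\Omega_m^{(\lambda)}-\rank M_m(\lambda)-\rank M_{m+1}(\lambda)
\]
applied separately in each Fourier block of Proposition~\ref{prop:Fourier}. The inputs are:
\begin{itemize}
\item the basis lemma for $S=\{1,2,4\}$, which gives $\dim\Omega_m^{(\lambda)}=2m+1$ for $m\ge1$ and $\dim\Omega_0^{(\lambda)}=1$;
\item the lemma just proved, $\rank M_m(\lambda)=m$ for all $m\ge2$;
\item the degree-one symbol $M_1(t)=[\,t-1,\ t^2-1,\ t^4-1\,]$.
\end{itemize}
(The last line of the corollary should be read as $H_m^{(\lambda)}=0$ for $m\ge2$, since the cases $m=0,1$ are already listed separately.)

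\textbf{Degree $0$.} This is the lemma for $1\in S$: $\rank M_1(\lambda)=1$ if $\lambda\ne1$ and $0$ if $\lambda=1$. Hence $H_0^{(\lambda)}$ vanishes for $\lambda\ne1$ and is one-dimensional for $\lambda=1$.

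\textbf{Degree $1$.} This was computed before the basis lemma, using $\rank M_2(\lambda)=2$. To confirm that this rank holds for every $\lambda$, including $\lambda=1$, I would exhibit a nonzero $2\times2$ minor. The columns of $A$ and $B$ give
\[
\det\begin{pmatrix}t+1&0\\-1&t^2+1\end{pmatrix}\ \text{ and, from the bottom two rows, }\ \det\begin{pmatrix}-1&t^2+1\\0&-1\end{pmatrix}=1,
\]
so the rank is at least $2$ everywhere. It is at most $2$ because $\partial_1\partial_2=0$ forces the image into $\ker M_1(\lambda)$, which has dimension $2$ when $\lambda\ne1$. When $\lambda=1$ one checks directly that $M_2(1)$ has rank $2$: its rows satisfy a visible linear relation. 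Therefore
\[
\dim H_1^{(\lambda)}=3-\rank M_1(\lambda)-2,
\]
which equals $0$ if $\lambda\ne1$ and $1$ if $\lambda=1$.

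\textbf{Degrees $m\ge2$.} Here
\[
\dim H_m^{(\lambda)}=(2m+1)-m-(m+1)=0,
\]
uniformly in $\lambda$, with $m=2$ using $\rank M_2=2$ and $\rank M_3=3$.

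Summing over the blocks via \eqref{eq:homology-split} yields the Betti numbers stated in Example~\ref{ex:Cn124125}.

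\textbf{Main obstacle.} Everything depends on the rank lemma holding for every $\lambda$, and in particular at $\lambda=1$, where entries such as $t-1$ and $t^2-1$ vanish and the matrix might degenerate. The Step~2 pivots (entries equal to $1$) are independent of $t$, which is reassuring. I would still check $\lambda=1$ separately by verifying $\partial_m\partial_{m+1}=0$ together with the lower bound $\rank M_m(1)\ge m$, so that the upper bound follows from $\dim\ker$ considerations alone.
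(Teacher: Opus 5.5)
Your proposal is correct and follows the paper's own route: insert $\dim\Omega_m^{(\lambda)}=2m+1$ and $\rank M_m(\lambda)=m$ ($m\ge 2$) into the block-wise dimension formula, and read degrees $0$ and $1$ off $M_1$ and $M_2$; your explicit check that $\rank M_2(1)=2$ and your reading of the last case as $m\ge 2$ are both right. Your caution about special $\lambda$ is well founded, because the entry $\lambda+(-1)^m$ that Step~3 of the rank lemma relies on vanishes at $\lambda=1$ for odd $m$ (and at $\lambda=-1$ for even $m$); still, the constant entry $\pm1$ of $\partial U_{m,0}$ along $U_{m-1,1}$, together with the Step~2 pivots, gives $\rank M_m(\lambda)\ge m$ for every $\lambda$, and your $\dim\ker$ induction starting from $\rank M_2=2$ then supplies the matching upper bound.
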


Thus, the Betti numbers are
\[
(\beta_0,\beta_1,\beta_2,\dots)=(1,1,0,\dots),
\]
independent of prime/composite $n$ in the regime $n>10$.

\medskip
\subsection{The circulant digraph \texorpdfstring{$\vec{C}_n^{1,2,5}$, $n>10$}{125}}
Similarly,
\[
M_1(t)=\begin{bmatrix} t-1 & t^2-1 & t^5-1\end{bmatrix}.
\]
Now only $1+1=2$ is a legal sum; a basis for $\Omega_2^{(\lambda)}$ is
\begin{align*}
\qquad &A^{(\lambda)}=f_{1,1}^{(\lambda)}, &&
C_1^{(\lambda)}=f_{1,2}^{(\lambda)}-f_{2,1}^{(\lambda)},\\
\qquad &C_2^{(\lambda)}=f_{1,5}^{(\lambda)}-f_{5,1}^{(\lambda)},&&
C_3^{(\lambda)}=f_{2,5}^{(\lambda)}-f_{5,2}^{(\lambda)}.
\end{align*}

\noindent With respect to $(e_1^{(\lambda)},e_2^{(\lambda)},e_5^{(\lambda)})$ and $(A^{(\lambda)},C_1^{(\lambda)},C_2^{(\lambda)},C_3^{(\lambda)})$,
\[
M_2(t)=
\begin{pmatrix}
t+1 & 1-t^2 & 1-t^5 & 0\\
-1 & t-1 & 0 & 1-t^5\\
0 & 0 & t-1 & t^2-1
\end{pmatrix}.
\]
Similarly, we have
\begin{itemize}
    \item for $\lambda\neq 1$, $\rank{M_2(\lambda)}=2$, then
    \[\dim H_1^{(\lambda)}=\dim\Omega_1^{(\lambda)}-\rank M_1(\lambda)-\rank M_2(\lambda)=3-1-2=0;\]
    \item for $\lambda=1$, $\rank{M_2(1)}=1$, then
    \[\dim H_1^{(1)}=\dim\Omega_1^{(1)}-\rank M_1(1)-\rank M_2(1)=3-0-1=2.\]
\end{itemize}

In general, define four generators for each $m\ge 2$:
\begin{align*}
\qquad &A_m := f^{(\lambda)}_{1,1,\dots,1}, &&
B_m := \sum_{i=0}^{m-1} (-1)^{m-1-i}f^{(\lambda)}_{1^i,2,1^{m-1-i}},\\
\qquad &C_m := \sum_{i=0}^{m-1} (-1)^{m-1-i} f^{(\lambda)}_{1^i,5,1^{m-1-i}}, && D_m := \sum_{w\in \mathrm{Sh}(1^{m-2},2,5)} (-1)^{\mathrm{inv}(w)} f^{(\lambda)}_{w},
\end{align*}
where inversion parity uses $1<2<5$.

\vskip 0.1cm
Continue the discussion as we have done for $\vec{C}_n^{1,2,4}$, we have
\begin{lemma}
For $S=\{1,2,5\}$ and every $m\ge 2$, $\Omega_m$ is 4 dimensional, and
\[
\Omega_m^{(\lambda)}=\mathrm{Span}\{A_m,B_m,C_m,D_m\}.
\]
\end{lemma}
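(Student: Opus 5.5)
We follow the two-step template of the $\{1,2,4\}$ lemma: first an upper bound $\dim\Omega_m^{(\lambda)}\le 4$ from the local cancellation rules, then a check that $A_m,B_m,C_m,D_m$ lie in $\Omega_m^{(\lambda)}$ and are linearly independent. Write $u=\sum_{w\in\{1,2,5\}^m}c_wf_w^{(\lambda)}$. By \eqref{eq:partialf}, the interior faces of $f_w^{(\lambda)}$ are $f^{(\lambda)}$ of the merged words, each with sign $+1$. Since $n>10$ there is no wrap-around, so the merged step is the integer sum $s_j+s_{j+1}$. Among the sums of $\{1,2,5\}$, only $1+1=2$ lies in $S$; the values $3,4,6,7,10$ are illegal. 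As in Lemma~\ref{lem:fixed-start-2dof}, a face with illegal merged step $r$ at position $j$, with fixed outside context, arises only from the words whose pair $(s_j,s_{j+1})$ satisfies $s_j+s_{j+1}=r$. We therefore read off the following constraints:
\begin{itemize}
\item The merged step $4$ comes only from $22$, so $c_w=0$ whenever $w$ contains $22$.
\item The merged step $10$ comes only from $55$, so $c_w=0$ whenever $w$ contains $55$.
\item The merged steps $3$, $6$ and $7$ come only from the unordered pairs $\{1,2\}$, $\{1,5\}$ and $\{2,5\}$ respectively, so $c_{\cdots ab\cdots}=-c_{\cdots ba\cdots}$ for each such pair.
\end{itemize}
The merge $11\to2$ is legal and imposes nothing.

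Next we use the swap rules to move letters. They let us move any $2$ or $5$ past $1$'s, and past each other, at the cost of a sign. Any word with two $2$'s can be brought, by such swaps, to a word containing $22$: transport one $2$ across the intervening letters. The only letters it ever needs to cross are $1$'s and $5$'s, and $2$ anticommutes with both, so the coefficient is $\pm$ that of a word containing $22$, hence zero. The same argument kills every word with two $5$'s. So only words with at most one $2$ and at most one $5$ survive. Each surviving word sorts to one of the forms $1^m$, $1^{m-1}2$, $1^{m-1}5$, or $1^{m-2}25$, and $c_w=(-1)^{\mathrm{inv}(w)}c_{w^\uparrow}$. This gives $\dim\Omega_m^{(\lambda)}\le 4$.

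For the reverse inclusion we note that $A_m,B_m,C_m,D_m$ are exactly the sign-alternating orbit sums of these four sorted words. For $B_m$ and $C_m$, the sign $(-1)^{m-1-i}$ equals $(-1)^{\mathrm{inv}}$, counting the $1$'s after the big letter; we should also double-check that this matches the swap relations. Then every illegal face cancels in pairs, and the $22$- and $55$-rules are vacuous, so each generator lies in $\Omega_m^{(\lambda)}$. The four generators have disjoint supports among the words $f_w^{(\lambda)}$, which are independent in $A_m^{(\lambda)}$, so they are linearly independent. This gives dimension exactly $4$, independent of $\lambda$. Summing over the $n$ Fourier modes gives $\dim\Omega_m=4n$; the phrase ``$4$ dimensional'' in the statement should be read per mode.

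The main obstacle is the ``only source'' claim for illegal faces, especially the transport argument for two $5$'s. The merged step $7$ has only the decomposition $2+5$ within $S$, and $6$ only $1+5$, so there are no three-term relations. We should also confirm that no legal merge (here only $11\to2$) interferes: legal faces carry no constraint, so the constraint system is exactly the one listed above.
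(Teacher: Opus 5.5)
Your proposal is correct and follows essentially the paper's route, since the paper proves this by repeating the $\{1,2,4\}$ argument: local cancellation rules from the illegal merges $3,4,6,7,10$, bubble-sorting to the four surviving sorted words $1^m,1^{m-1}2,1^{m-1}5,1^{m-2}25$ for the upper bound, and checking that $A_m,B_m,C_m,D_m$ are the corresponding alternating sums; you are also right that ``4-dimensional'' must be read per Fourier mode. One harmless slip, copied from \eqref{eq:partialf}: deleting the $j$-th interior vertex gives the face with sign $(-1)^j$, not $+1$, but every word contributing to a given illegal face uses the same $j$, so your constraints are unchanged.
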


Using the ordered basis $\mathcal{C}_m=(A_m,B_m,C_m,D_m)$, the differential
$\partial_m$ is represented by one of two matrices depending on the parity of $m$:

\medskip\noindent
\textbf{Odd $m$ (e.g.\ $m=3,5,\dots$):}
\[
M_m(t)=
\begin{pmatrix}
t-1 & t^2-1 & t^5-1 & 0\\
1 & t+1 & 0 & 1-t^5\\
0 & 0 & t+1 & t^2-1\\
0 & 0 & 1 & t-1
\end{pmatrix}.
\]

\medskip\noindent
\textbf{Even $m$ (e.g.\ $m=4,6,\dots$):}
\[
M_m(t)=
\begin{pmatrix}
t+1 & 1-t^2 & 1-t^5 & 0\\
-1 & t-1 & 0 & 1-t^5\\
0 & 0 & t-1 & t^2-1\\
0 & 0 & 1 & t+1
\end{pmatrix}.
\]

Computing ranks on each Fourier block gives 
\[\rank M_m(\lambda)=2,\quad \forall m\ge 3,~\lambda^n=1.\]
Furthermore, it gives the Betti numbers:
\[
\beta_0=1,\quad \beta_1=2,\quad \beta_2=1,\quad \beta_m=0\ (m\ge 3).
\]
This completes the computation for all higher degrees. Again the result is independent of prime/composite $n$ when $n>10$.

\end{document}